\theoremstyle{plain}
\newtheorem{thm}{Theorem}%[section]
\newtheorem*{thm*}{Theorem}
\newtheorem*{thmA}{Theorem A}
\newtheorem*{thmB}{Theorem B}
\newtheorem*{thmC}{Theorem C}
\newtheorem{lem}[thm]{Lemma}
\newtheorem{prop}[thm]{Proposition}
\newtheorem{cor}[thm]{Corollary}
\newtheorem{df-prop}[thm]{Definition-Proposition}
\theoremstyle{definition}
\theoremstyle{remark}
\newtheorem{rem}[thm]{Remark}
\newtheorem{ex}[thm]{Example}
\newcommand{\Hom}{\operatorname{Hom}}
\newcommand{\mc}{\mathcal}
\newcommand{\mf}{\mathfrak}
\newcommand{\C}{\mathbb C}
\newcommand{\oa}{{\bar 0}}
\newcommand{\ob}{{\bar 1}}
\newcommand{\vare}{\epsilon} %%%% change-original \vere=\varepsilon
\def\gl{\mathfrak{gl}}
\newcommand{\g}{\mathfrak{g}}
\def\Ann{{\text{Ann}}}
\def\la{\lambda}
\def\pn{\mf{pe} (n)}
\def\ov{\overline}
\newcommand{\ch}{\mathrm{ch}}
\newcommand{\h}{\mathfrak{h}}
\newcommand{\Z}{{\mathbb Z}}
\newcommand{\rad}{\mathrm{rad}}
\def\mod{\operatorname{-mod}\nolimits}
\def\Hom{\operatorname{Hom}\nolimits}
\def\Res{\operatorname{Res}\nolimits}
\def\Ind{\operatorname{Ind}\nolimits}
\def\pr{\operatorname{pr}\nolimits}
\def\gl{\mathfrak{gl}}
\def\la{\lambda}
\def\pn{\mf{pe} (n)}
\def\ov{\overline}
\newcommand{\ad}{\mathrm{ad}}
\begin{document}
\title[Kostant's problem for Whittaker modules]
{Kostant's problem for Whittaker modules}

\author[Chen]{Chih-Whi Chen}
\address{Department of Mathematics, National Central University, Chung-Li, Taiwan 32054} \email{cwchen@math.ncu.edu.tw}

\begin{abstract}
	We study the classical problem of Kostant for Whittaker modules over Lie algebras and Lie superalgebras. 	We give a sufficient condition for a positive answer
	to Kostant's problem for the standard Whittaker modules over reductive Lie algebras. Under the same condition,  the positivity of the answer for simple Whittaker modules is reduced to that for simple highest weight modules. We develop several reduction  results to reduce the  Kostant's problem for standard and simple Whittaker modules over a type I Lie superalgebra  to that for the corresponding Whittaker modules over the even part of this Lie superalgebra.
\end{abstract}

\maketitle

\noindent
\textbf{MSC 2010:} 17B10 17B55

\noindent
\textbf{Keywords:} Lie algebra; Lie superalgebra; Kostant's problem; Whittaker module.

\vspace{5mm}

\section{Introduction and background}\label{sec1}
\subsection{Kostant's problem}   Let $\g$ be a complex reductive finite-dimensional Lie algebra and $U(\g)$ its universal enveloping algebra.  For any $\g$-module $M$ the space $\Hom_\C(M,M)$ can be viewed as a $U(\g)$-bimodule in the natural
way. Following \cite[Kapitel 6]{Ja}, we denote  by $\mc L(M,M)$ the subbimodule of  $\Hom_\C(M,M)$ consisting of all elements   the adjoint action of $\g$ on which is locally finite and semisimple. Since   $U(\g)$ is locally finite under the adjoint action of $\g$, the representation map  $\phi: 
U(\g) \rightarrow  \mc L(M,M)$ is a homomorphism, and the kernel of this map is the annihilator $\Ann_{U(\g)}(M)$
of $M$ in $U(\g)$. %with a fixed triangular decomposition $\g =\mf n_- \oplus \h \oplus \mf n_+$. 

The classical problem of Kostant, as  popularized  by Joseph in \cite{Jo} in the setup of semisimple Lie algebras, is a famous open
problem in representation theory and is  formulated in the following way: \vskip0.2cm

{\bf Kostant's problem}. For which $\g$-module $M$ is the representation map $\phi$ 
\begin{align*}
	&\phi: U(\g) \rightarrow \mc L(M,M),
\end{align*} surjective? \vskip0.2cm

The positive answer to Kostant’s problem provides   important tools, in particular, in the study of equivalences of various categories for Lie algebra modules and the study of induced modules; see, e.g., \cite{MS, KhM04, MaSt08}.
In recent years, Kostant's problem  has been studied extensively for weight modules in the BGG category $\mc O$ from \cite{BGG}. The answer is known to be positive for Verma modules (see  \cite[Corollary 6.4]{Jo} and \cite[Corollary 7.25]{Ja}) and for certain classes of simple highest weight modules,  whose highest
weights are associated to parabolic subgroups of the Weyl group (see \cite{Jo,GaJo81, Ja}). However, it is also known that there exist simple highest weight modules for which the answer is negative; see, e.g., \cite[Section 9.5]{Jo}. 

  Mazorchuk \cite{Ma05} in 2005 developed    a new elegant approach to this problem that uses Arkhipov's twisting functor from \cite{Ar04} (see also  \cite{AS}). Since then, there have been  various refinements of this approach %using twisting functors 
  employed in \cite{MaSt08,Ka10, KaM10}  and then gave more positive and negative answers to  Kostant's problem for simple highest weight modules. However, a conjectural complete list of the answers to Kostant's problem was not even  available for modules in the principal block of the BGG  category $\mc O$ until the work of  Ko, Mazorchuk and Mrden \cite{KMM23}, which was partially   motivated by K\aa hrstr{\" o}m's conjectures raised in his email to Mazorchuk; see \cite[Section 1.2]{KMM23}.  %In loc. cit. it is proved that  Kostant’s problem is equivalent to  some homological problems of decomposing translated simple modules and then established a conjectural answer to Kostant's problem as well as the validity of one of K\aa hrstr{\" o}m's conjectures. % {\color{red} However, a conjectural answer to  Kostant's problem was not even  available for modules in the principal block $\mc O_0$ of the BGG  category $\mc O$ until quite recently. K\aa hrstr{\" o}m in 2019 formulated several conjectures about the complete answer for simple module in $\mc O_0$ in terms of the structure of the translated simple modules; see \cite[Conjecture 1.2]{KMM23}. Subsequently, Ko, Mazorchuk and Mrden \cite{KMM23} provided a conjectural answer to Kostant's problem in terms of Kazhdan-Lusztig basis and proved one of K\aa hrstr{\" o}m's conjectures.} 
    We also refer to \cite{Co74,Jo2, MaSt08a, Ma10, MaMe11, MMM22,Ma23, MSr23} and references therein for more results on Kostant's problem for simple highest weight modules and parabolic Verma modules.

While substantial progress for Kostant's problem for weight modules  has been made  by now towards a satisfactory theory and a complete answer, the problem for non-weight modules has not been studied much. As was already observed in \cite{Jo2}, Kostant's problem might have a negative answer for Kostant's simple Whittaker $\mf{sl}(2)$-modules from \cite{Ko78}, in contrast to the positivity of the answer for simple highest weight $\mf{sl}(2)$-modules. 

\subsection{Whittaker modules}
	\subsubsection{Whittaker modules over Lie algebras} Let $\g$ be a finite-dimensional complex reductive  Lie algebra with a fixed triangular decomposition  \begin{align}
		&\g=\mf n_-\oplus \mf h\oplus\mf n_+ \label{defn::trian}
	\end{align} with Cartan subalgebra $\mf h$ and nilradicals $\mf n_{\pm}$. Let $Z(\g)$ be the center of $U(\g)$. In the present paper, we refer to the finitely generated $\g$-modules that are locally finite over the $Z(\mf g)$ and $U(\mf n_+)$ as {\em Whittaker modules}.  Well-known examples include modules in the BGG category $\mc O$  with respect to the Borel subalgebra $\mf h+\mf n_+$.

 	Motivated by the theory of Whittaker models, in his 1978 seminal paper \cite{Ko78} Kostant introduced  a family of simple modules $Y_{\xi,\eta}$ over $\g$. %, including earlier $\mf{sl}(2)$-example .  
 	 In loc. cit., Kostant characterized these modules as simple Whittaker modules containing the so-called  Whittaker vectors associated to  {\em non-singular} characters $\zeta: \mf n_+\rightarrow \C$, that is, $\zeta$ do not vanish on any simple root vector. Subsequently, a systematic construction of Whittaker modules has been further developed by McDowell   \cite{Mc,Mc2} and
 	 by Mili{\v{c}}i{\'c} and Soergel \cite{MS, MS2} in full generality.    We also refer to  \cite{Mat88, B, BM, We11, CoM, Bro,  BR20, AB, R1,BR} and references therein for more   results on the study of Whittaker modules over reductive Lie algebras.
 	 
 	  The most basic  amongst the Whittaker modules is the so-called {\em standard Whittaker modules}, which are denoted by $M(\la,\zeta)$ with parameters $(\la,\zeta)\in \h^\ast\times \ch \mf n_+$. In particular, in the case that $\zeta=0$ they  coincide with Verma modules. In general, the isomorphism classes of  simple Whittaker modules can be represented  as the simple tops of $M(\la,\zeta)$ with $(\la,\zeta)\in \h^\ast\times \ch \mf n_+$, which we denote by $L(\la,\zeta)$. 
 	 
 	   The standard  Whittaker modules plays an important role in various investigations, and this has led to  numerous remarkable applications. In \cite{B}, Backelin developed a complete solution to the problem of composition multiplicity of the standard Whittaker modules in terms of Kazhdan–Lusztig polynomials, extending earlier work of Mili{\v{c}}i{\'c} and Soergel \cite{MS}. Losev in \cite[Theorems 4.1, 4.3]{Lo09} established a  remarkable equivalence between the category of Whittaker modules and a version of category $\mc O$ for a certain finite $W$-algebra and this, together with Backelin's solution, prove an earlier conjecture of Brundan and Kleshchev  \cite[Conjecture 7.17]{BK08} about the multiplicity problem of Verma modules over certain finite $W$-algebras.

The positive answer to Kostant's problem for  (thick) standard Whittaker modules over Lie algebras plays a significant role in the approach of establishing equivalences between various categories of Whittaker modules and Harish-Chandra bimodules proposed in \cite{MS}; see also \cite{Ch21, Ch212, CC22,CCM,CC23} for its direct application to representation theory of Lie superalgebras and finite $W$-superalgebras.

	\subsubsection{Whittaker modules over   Lie superalgebras}
	 A finite-dimensional Lie superalgebra $\widetilde{\mf g}=\widetilde{\g}_\oa\oplus \widetilde{\g}_\ob$ is called {\em quasi-reductive} if $\widetilde{\g}_\oa$
	is reductive and $\widetilde{\g}_\ob$	is semisimple as a $\widetilde \g_\oa$-module under the adjoint action; see also \cite{Se11}. 
	Such Lie superalgebras are called classical in \cite{Ma, CCC}.    Similar to the Lie algebra module case, knowing the answer to Kostant's problem for a given module over Lie superalgebra is important for using the approach of constructing equivalences of categories for
		Lie superalgebra modules; see \cite{Go, MaMe12, ChCo}.
	
	Quasi-reductive Lie superalgebras admit the notion of triangular decompositions; see also \cite{Mu12, Ma, CCC}. Recently, the theory of Whittaker modules over quasi-reductive Lie superalgebras and its applications to other fields  have been systematically developed by the various authors in \cite{BCW, Ch21, Ch212, CC22, CCM, CC23}. In particular, the standard Whittaker modules can be  defined, in a natural way, in the general setup of quasi-reductive Lie superalgebras and play a significant role in the study of Whittaker modules.

		While there has been considerable recent progress in the Kostant's problem for modules over Lie algebras, at present, analogous problem has received less attention in the setup of Lie superalgebras. For {\em basic classical} Lie superalgebras (in the sense of Kac \cite{Ka1}), Gorelik in \cite{Go} established positive answers to Kostant's problem for all Verma modules of {\em strongly typical} highest weights. An analogous result  for the {\em periplectic Lie superalgebra}  $\pn$ (in the sense of Kac \cite{Ka1}) has also been proved by Serganova in \cite{Se02}. To the
		best of our knowledge, a complete solution to this problem has not even been obtained for Verma modules over an arbitrary basic classical or strange Lie superalgebra, in contrast to the Lie algebra case.

 \subsection{Goal} The major motivation for the present paper is our attempt to determine answers to Kostant's problem for  standard and simple Whittaker modules over Lie algebras and Lie superalgebras. Special attention
	is paid to the type I Lie superalgebras, including reductive Lie algebras, the general linear Lie superalgebras $\gl(m|n)$, the ortho-symplectic Lie superalgebras $ \mf{osp}(2|2n)$ and the periplectic Lie superalgebras $\pn$ from Kac's list  \cite{Ka1} throughout.

\subsection{The main results} 
Throughout the paper the symbols $\Z$, $\Z_{>0}$ and $\Z_{<0}$ stand for the sets of all, positive and negative integers, respectively. All vector spaces, algebras, tensor products, et cetera, are over the field of complex numbers $\C$.
\subsubsection{} \label{sect::themainresults}  To explain the main results of the paper in more detail, we start by explaining our precise setup. 
%{\color{cyan}Let $\g$ be a semi-simple complex Lie algebra with a fixed triangular decomposition  \begin{align} &\g=\mf n_-\oplus \mf h\oplus\mf n_+. \label{defn::trian} \end{align} and $U(\g)$ be the universal enveloping algebra of $\g$.}  Let $\Phi$ be the set of all roots  with respect to the triangular decomposition in  \eqref{defn::trian}. 
Recall that we fix a triangular decomposition  in  \eqref{defn::trian} with Cartan subalgebra $\mf h$ and nilradicals $\mf n_{\pm}$. Let $\Phi\subset \mf h^\ast$ denote the corresponding set of roots.  Denote by $\Pi, \Phi^+$ and $\Phi^-$  
the sets of simple, positive and negative roots,  respectively. For a given root $\alpha\in \Phi$, we denote by $\g^\alpha:=\{x\in \g|~hx=\alpha(h)x, \text{for all }h\in \h\}$ the
corresponding root space. Denote by $\ch \mf n_+  = (\mf n_+/[\mf n_+,\mf n_+])^\ast$ the set of all  characters of $\mf n_+.$  For a given  character $\zeta\in \ch \mf n_+$, we  define a subset $\Pi_\zeta\subseteq \Pi$ as follows: 
\[\Pi_\zeta:=\{\alpha\in \Pi|~\zeta(\g^\alpha)\neq 0\}.\]
 This gives rise to a Levi subalgebra $\mf l_\zeta$ generated by $\g^\alpha$ ($\pm \alpha \in \Pi_\zeta$) and $\h$.

The Weyl group $W$ of $\g$ acts on $\mf h^\ast$ naturally. It is more convenient to consider the dot-action given by $w\cdot \la =w(\la+\rho)-\rho$, for $w\in W$ and $\la \in \h^\ast$, where $\rho$ is the half sum of the positive roots. Denote by $W_\zeta \subseteq W$  the Weyl group of $\mf l_\zeta$. The standard action and dot-action of $W_\zeta$ on $\mf h^\ast$ are defined in the same way. 

We fix a $W$-invariant non-degenerate symmetric bilinear form $(_-, _-)$ on $\mf h^\ast$. For any $\la\in \h^\ast$ and $\alpha\in \Phi^+$, we set $\langle \la,\alpha^\vee \rangle := 2(\la,\alpha)/(\alpha,\alpha)$. A weight $\la\in \h^\ast$ is called {\em integral } if $\langle \la, \alpha^\vee\rangle\in \Z$ for all $\alpha\in \Pi$. Denote by $\Lambda\subseteq \h^\ast$ the set of   integral weights.  A weight $\la \in \h^\ast$ is called {\em $W_\zeta$-anti-dominant} (resp. {\em $W_\zeta$-dominant}) if it is anti-dominant (resp. dominant) with respect to the dot-action of $W_\zeta$, that is, $\langle \la+\rho,\alpha^\vee\rangle \not \in \Z_{>0}$ (resp. $\langle \la+\rho,\alpha^\vee\rangle \not \in \Z_{<0}$) for positive roots $\alpha$ in $\mf l_\zeta$. A weight is said to be {\em anti-dominant} and {\em dominant} provided that it is $W$-anti-dominant and $W$-dominant, respectively. 

\subsubsection{}\label{sect::142}
%{\color{cyan} Following \cite{MS}, a finitely generated $\g$-module $M$ is called a {\em Whittaker module} provided that it is locally finite over the center $Z(\mf g)$  of $U(\g)$ and $U(\mf n_+)$.} {\color{red} explain some properties, like any Whittaker modules has finite length.}

Following notations in \cite{MS}, we denote by  $Y_\zeta(\la,\zeta)$ the family of Kostant’s simple Whittaker $\mf l_\zeta$-modules from \cite{Ko78},  where $\la\in \h^\ast$ and $\zeta\in \ch \mf n_+$. That is, $$Y_\zeta(\la,\zeta)= U(\mf l_\zeta)/\ker(\chi^{\mf l_\zeta}_\la)U(\mf l_\zeta)\otimes_{U(\mf n_+\cap \mf l_\zeta)}\C_\zeta,$$ 
where $\ker(\chi_\la^{\mf l_\zeta})$ is the kernel of the central character $\chi_\la^{\mf l_\zeta}: Z(\mf l_\zeta) \rightarrow \C$ associated with $\la$ and $\C_\zeta$ is the one-dimensional $\mf n_+\cap \mf l_\zeta$-module associated with $\zeta$. Let $\mf p_\zeta := \mf l_\zeta+\mf n_+$. Recall  the corresponding standard Whittaker module $M(\la,\zeta)$ from \cite{MS}:
\begin{align*}
&M(\la,\zeta) = U(\g)\otimes_{U(\mf p_\zeta)}Y_\zeta(\la,\zeta).
\end{align*}  
The top of $M(\la,\zeta)$ is simple, which we denote by $L(\la,\zeta)$. The  modules $\{L(\la,\zeta)|~\la\in \h^\ast, \zeta\in \ch \mf n_+\}$ constitute a complete set of simple Whittaker modules. We define $M(\la):=M(\la,0)$, which is the Verma module of highest weight $\la$. Also, we put $L(\la):=L(\la,0)$. For any $\la,\mu\in\h^\ast$ and $\zeta,\eta\in \ch \mf n_+$, we have 
\begin{align*}
&M(\la,\zeta)\cong M(\mu,\eta) \Leftrightarrow L(\la,\zeta)\cong L(\mu,\eta) \Leftrightarrow \zeta=\eta~\text{and}~\la\in W_\zeta\cdot \mu.
\end{align*}
This allows us to parametrize simple Whittaker modules by   $\zeta\in \ch \mf  n_+$  and  $W_\zeta$-anti-dominant weights  $\la\in \h^\ast$. We refer to \cite{Mc,MS} for
all details.

Following \cite{MMM22,Ma23} a $\g$-module $M$ will be called {\em Kostant positive} provided that the answer to Kostant’s problem for $M$ is positive, namely, the representation map $\phi: U(\g)\rightarrow \mc L(M,M)$ is surjective, and {\em Kostant negative} otherwise. Our first main result is the following (see also Theorem \ref{thm::maintmAintext}):
\begin{thmA}  \label{mainthm1}
		% 	If  $\la\in \nu+\Lambda$, where $\nu$ is a dominant weight such that $W_\nu=W_\zeta$. 
		Let $\zeta\in\ch\mf n_+$ and $\la\in \h^\ast$ be $W_\zeta$-anti-dominant. Assume that  $\langle \la,\alpha^\vee\rangle \in \Z$, for all $\alpha\in \Pi_\zeta$. Then we have 
		\begin{itemize}
			\item[(1)] $M(\la,\zeta)$ is Kostant positive.  
			\item[(2)]  $L(\la,\zeta)$ is Kostant positive if and only if $L(\la)$ is.
		\end{itemize} 
\end{thmA}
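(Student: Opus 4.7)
The overall plan is to reduce both parts to known cases, using parabolic induction (for part (1)) and the Backelin / Mili\v{c}i\'{c}--Soergel comparison with category $\mc O$ (for part (2)). Two preliminary inputs enter throughout: Kostant's 1978 theorem, giving Kostant positivity of $Y_\zeta(\la,\zeta)$ over $\mf l_\zeta$ whenever $\zeta$ is non-singular on $\mf l_\zeta$ (which is automatic by the definition of $\Pi_\zeta$) and $\la$ is integrally $W_\zeta$-anti-dominant; and the identification of annihilators $\Ann M(\la,\zeta) = \Ann M(\la)$ and $\Ann L(\la,\zeta) = \Ann L(\la)$ due to Mili\v{c}i\'{c}--Soergel, valid under the present hypotheses.

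For part (1), I would transfer Kostant positivity of $Y_\zeta(\la,\zeta)$ over $\mf l_\zeta$ to Kostant positivity of the induced $\g$-module $M(\la,\zeta) = U(\g)\otimes_{U(\mf p_\zeta)} Y_\zeta(\la,\zeta)$. For each finite-dimensional $\g$-module $V$, the multiplicity of $V$ in $\mc L(M(\la,\zeta),M(\la,\zeta))$ equals $\dim \Hom_\g(M(\la,\zeta),\, V^\ast\otimes M(\la,\zeta))$, and $V^\ast\otimes M(\la,\zeta)$ carries a standard-Whittaker filtration with subquotients $M(\la+\nu,\zeta)$ indexed by the weights $\nu$ of $V^\ast$. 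Computing these Hom-dimensions and using the PBW identification $M(\la,\zeta) \cong U(\mf u^-)\otimes Y_\zeta(\la,\zeta)$ as $\mf l_\zeta$-modules (where $\mf u^-$ is opposite to the nilradical of $\mf p_\zeta$), I would match the $V$-multiplicity on the left with that of $U(\g)/\Ann M(\la)$, reducing everything to the Levi level, where Kostant's theorem applies to $Y_\zeta(\la,\zeta)$.

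For part (2), I would use Backelin's exact functor $\mathcal{B}$ sending $M(\mu)\mapsto M(\mu,\zeta)$ and $L(\mu)\mapsto L(\mu,\zeta)$ on the relevant $\mc O$-block. Since $\Ann L(\la,\zeta) = \Ann L(\la)$, the two representation maps $\phi$ have the same image $U(\g)/\Ann L(\la)$, so Kostant positivity of $L(\la,\zeta)$ is equivalent to the $V$-isotypic components of $\mc L(L(\la,\zeta),L(\la,\zeta))$ and $\mc L(L(\la),L(\la))$ having equal dimensions for every finite-dimensional simple $V$. To establish this, I would take a resolution of $L(\la)$ by Verma modules in the relevant block, apply $\mathcal{B}$ to produce a resolution of $L(\la,\zeta)$ by standard Whittaker modules, and compute $\mc L$ on both sides from the bimodules $\mc L(M(\mu,\zeta), M(\nu,\zeta))$ and $\mc L(M(\mu),M(\nu))$; the Whittaker side is controlled by part (1), while the highest-weight side falls under the classical Joseph--Jantzen theory.

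The main expected obstacle is the transfer step in part (1): since $M(\la,\zeta)$ is not $\mf u$-finite, the Levi-level comparison requires careful bookkeeping of the $\mf u$-weight structure of $V$ against the PBW basis of $M(\la,\zeta)$, and one must verify that the standard-Whittaker flag of $V^\ast\otimes M(\la,\zeta)$ behaves compatibly under the Hom-computation. For part (2), the analogous delicacy is showing that $\mathcal{B}$ preserves enough of the resolution structure for the two $\mc L$-computations to run in parallel, which ultimately rests on part (1) controlling the Whittaker side.
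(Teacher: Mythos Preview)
Your plan has two genuine gaps.

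\textbf{Part (1).} Your starting input---``Kostant's 1978 theorem, giving Kostant positivity of $Y_\zeta(\la,\zeta)$ over $\mf l_\zeta$''---is not what Kostant proved. Kostant's 1978 paper computes the annihilator of $Y_\zeta(\la,\zeta)$ and shows it is simple; it says nothing about the surjectivity of $U(\mf l_\zeta)\to\mc L(Y_\zeta(\la,\zeta),Y_\zeta(\la,\zeta))$. Indeed, as the paper recalls from Joseph, this surjectivity can \emph{fail} for non-singular Whittaker modules in the non-integral case. In the integral case it \emph{is} true, but that is exactly the content of Theorem~A applied to $\mf l_\zeta$ with its non-singular $\zeta$---so you cannot use it as an input without circularity. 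You would first need an independent argument for the non-singular integral Levi case, and you have not indicated one. Even granting that, the ``match the $V$-multiplicity'' step via the PBW decomposition and the standard-Whittaker flag is the actual heart of the matter and is left entirely unsubstantiated.

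\textbf{Part (2).} Your resolution strategy does not work as stated. A general simple module $L(\la)$ in $\mc O$ has no resolution by Verma modules; BGG-type resolutions exist only for very special $\la$. You can take a projective resolution, and projectives have Verma flags, but $\mc L(-,-)$ is not exact in either variable, so you cannot compute $\mc L(L(\la),L(\la))$ from the bimodules $\mc L(M(\mu),M(\nu))$ by a spectral-sequence or long-exact-sequence argument. The bookkeeping you allude to does not survive this failure of exactness. (Also: the annihilator identity $\Ann L(\la,\zeta)=\Ann L(\la)$ is not in Mili\v{c}i\'{c}--Soergel; it is established more recently.)

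The paper takes a unified route that avoids both pitfalls. It shows directly that for $X=M(\la)$ or $L(\la)$ the Backelin (or Mili\v{c}i\'{c}--Soergel type) functor induces a \emph{bijection} $\Hom_\g(X\otimes V,X)\xrightarrow{\sim}\Hom_\g(\Gamma_\zeta(X)\otimes V,\Gamma_\zeta(X))$ for every $V\in\mc F$. Injectivity is elementary (tops of $X\otimes V$ are $\alpha$-free for $\alpha\in\Pi_\zeta$). Surjectivity is the key step: it uses that $T_{\nu,\zeta}$ is fully faithful on the category $\mc O^{\nu\text{-pres}}$ of $\nu$-projectively presentable modules, together with a short diagram-chase (Lemma~1/Corollary~2) showing that for $X$ with $\alpha$-free top and socle one can replace $X$ by a presentable cover without changing Hom. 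Combined with $\Ann X=\Ann\Gamma_\zeta(X)$, this transports Kostant's problem from $\Gamma_\zeta(X)$ back to $X$; since $M(\la)$ is classically Kostant positive, both (1) and (2) follow at once.
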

In the case $\zeta=0$, the assumption in Theorem A is trivially satisfied. In this case, Theorem A recovers the classical result about positive answers to Kostant's problem for Verma modules. In fact, our proof of Theorem A depends heavily on the Kostant positivity for this special case.  Another special case of Theorem A recovers \cite[Proposition 5.13]{MS} for standard Whittaker modules, where the Kostant positivity of (thick) standard Whittaker modules  has been established for any dominant weight $\la$ having $W_\zeta$ as its stabilizer subgroup under the Weyl group action.   

 If the assumption in Theorem A is dropped, that is, the case that $\la\in \h^\ast$ such that  $\langle \la,\alpha^\vee \rangle\not\in \Z$ for some $\alpha\in \Pi_\zeta$, then both conclusions in Parts (1) and (2) are no longer true in general; see, e.g., \cite[Section 2.3]{Jo2}.  
 
 \subsubsection{} \label{sect::143}
 For a given weight $\la \in \h^\ast$, denote by  $\chi_\la:= \chi_\la^\g:  Z(\g) \rightarrow \C$ the central character associated with $\la$.  Denote by $\g\mod$ the category of finitely-generated $\g$-modules and $\g\mod_{\chi_\la}$ its Serre subcategory generated by all simple modules that admit the central character $\chi_\la$. Let $\pr_{\chi_\la}(-):\g\mod \rightarrow \g\mod_{\chi_\la}$ be the natural projection.  Let, further, $\mc O_\la$ denote the indecomposable direct
 summand of the category $\mc O$ containing $L(\la)$.

Fundamental amongst all simple Whittaker modules are Kostant's simple Whittaker $\g$-modules  from \cite{Ko78}, which are exactly  $L(\la,\zeta)$ with $\zeta$ non-singular by definition. In this case, we have $M(\la,\zeta) = L(\la,\zeta)\in \g\mod_{\chi_\la}$ and $\la$ can be chosen to be anti-dominant. Our second main result is the following statement, which shows that the Kostant’s problem for these modules is equivalent to a homological problem about  translated simple modules. 
\begin{thmB}	Suppose that $\zeta\in \ch \mf n_+$ is non-singular. Let $\la\in \h^\ast$ be an (arbitrary) anti-dominant weight and $\chi:=\chi_\la$. Then the following are equivalent:
	\begin{itemize}
		\item[(1)] $L(\la,\zeta)$ is Kostant positive.   
		\item[(2)] Let $\theta: \g\mod_{\chi}\rightarrow \g\mod_{\chi}$ be a projective endo-functor, i.e., $\theta$ is a direct summand of $\pr_{\chi}(V\otimes-): \g\mod_{\chi}\rightarrow \g\mod_{\chi}$, for some finite-dimensional weight $\g$-module $V$. Then  $\theta L(\la)\in \mc O_\la$, and the lengths of the tops of  $\theta L(\la) $ and $ \theta L(\la,\zeta)$ are the same. 
	\end{itemize}
	\end{thmB}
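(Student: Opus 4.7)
The plan is to translate Kostant's problem for $L(\la,\zeta)$ into a comparison of projective-functor images on $L(\la)$ versus $L(\la,\zeta)$, using the Bernstein-Gelfand theory of Harish-Chandra bimodules together with the Miličić-Soergel/Backelin Whittaker functor. First, non-singularity of $\zeta$ gives $\mf{l}_\zeta=\g$, whence $M(\la,\zeta)=L(\la,\zeta)=Y_\g(\la,\zeta)$; anti-dominance of $\la$ gives $M(\la)=L(\la)$. A classical result going back to Kostant (see also \cite{MS}) identifies $\Ann_{U(\g)}L(\la,\zeta)=\Ann_{U(\g)}L(\la)=\ker(\chi)U(\g)$, so the image of $\phi$ is the Harish-Chandra bimodule $U_\chi:=U(\g)/\ker(\chi)U(\g)$ in both Kostant problems, and (1) amounts to the equality $U_\chi=\mc{L}(L(\la,\zeta),L(\la,\zeta))$ with $U_\chi\subseteq\mc{L}(L(\la,\zeta),L(\la,\zeta))$ automatic.

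Second, I would use the Bernstein-Gelfand equivalence between $\chi$-bicentral Harish-Chandra bimodules and $\g\mod_\chi$ (via $B\mapsto B\otimes_{U(\g)}M(\la)$) together with the decomposition of HC bimodules into indecomposable projective-functor isotypic components. This reduces the bimodule equality above to a family of numerical equalities
\[
\dim\Hom_\g(\theta L(\la),L(\la))=\dim\Hom_\g(\theta L(\la,\zeta),L(\la,\zeta)),
\]
one for each indecomposable projective endo-functor $\theta$ of $\g\mod_\chi$, where the left side records the $\theta$-multiplicity of the simple HC bimodule inside $U_\chi$ (which for anti-dominant $\la$ is controlled by $L(\la)=M(\la)$), and the right side records the same multiplicity inside $\mc{L}(L(\la,\zeta),L(\la,\zeta))$.

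Third, I would translate these Hom-dimension equalities into the top-length equalities of condition (2) via the Miličić-Soergel/Backelin Whittaker functor $\Gamma_\zeta$ from $\mc{O}_\la$ to the Whittaker category of central character $\chi$, which is exact, commutes with projective functors, and satisfies $\Gamma_\zeta\theta L(\la)=\theta L(\la,\zeta)$. Since $\zeta$ is non-singular, $L(\la,\zeta)$ is the unique simple Whittaker module of central character $\chi$, so every composition factor of $\theta L(\la,\zeta)$ equals $L(\la,\zeta)$ and $\dim\Hom_\g(\theta L(\la,\zeta),L(\la,\zeta))$ coincides with the length of the top of $\theta L(\la,\zeta)$. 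The condition $\theta L(\la)\in\mc{O}_\la$ guarantees that the top of $\theta L(\la)$ is a semisimple module in $\mc{O}_\la$, and controlling how $\Gamma_\zeta$ behaves on this top (via Backelin's multiplicity formulas for standard Whittaker modules) identifies the Hom-dimension equality with the top-length equality appearing in (2).

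The principal technical obstacle I foresee is the last translation step: carefully matching the top of $\theta L(\la)$ with the top of $\theta L(\la,\zeta)$ under $\Gamma_\zeta$ for non-singular $\zeta$ and arbitrary anti-dominant $\la$. The integral regular case follows from standard Miličić-Soergel/Backelin machinery; extending to non-integral or non-regular anti-dominant $\la$ will likely require auxiliary translation-functor and twisting arguments in the spirit of \cite{Ma05,MaSt08}, as well as a careful analysis of how $\Gamma_\zeta$ interacts with the radical of $\theta L(\la)$. Once this dictionary is in place, the equivalence (1)$\Leftrightarrow$(2) will follow at once from the projective-functor reformulation of Kostant positivity.
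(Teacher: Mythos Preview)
Your plan coincides with the paper's approach: both reduce Kostant positivity of $L(\la,\zeta)$ to the family of equalities $\dim\Hom_\g(\theta L(\la),L(\la))=\dim\Hom_\g(\theta L(\la,\zeta),L(\la,\zeta))$ via the annihilator identification $\Ann L(\la,\zeta)=\Ann L(\la)=\ker(\chi)U(\g)$ and the Backelin functor, and then translate these into the top-length statement using that $L(\la,\zeta)$ is the unique simple in $\mc N(\zeta)$ with central character $\chi$. The paper does this slightly more economically than you propose: rather than invoking the full Bernstein--Gelfand equivalence, it uses directly that $\Gamma_\zeta$ induces an injection on the relevant $\Hom$-spaces (its Lemma~\ref{lem::5}), which already forces the inequality $\leq$ and makes Kostant positivity equivalent to equality.

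The ``principal technical obstacle'' you flag is resolved more simply than you anticipate, and uniformly for all anti-dominant $\la$; no twisting functors or separate non-integral arguments are needed. The key point you are missing is that $L(\la)=M(\la)$ is a \emph{Verma} module, so $\theta L(\la)=\theta M(\la)$ carries a Verma flag, and hence so does any direct summand. This immediately yields the $(1)\Rightarrow(2)$ step $\theta L(\la)\in\mc O_\la$: if a nonzero summand $U'$ lay outside $\mc O_\la$, its Verma flag would force $\Gamma_\zeta U'\neq 0$, whence $\Hom_\g(\Gamma_\zeta U',L(\la,\zeta))\neq 0$ by uniqueness of the simple Whittaker, contradicting the Hom-equality. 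Once $\theta L(\la)\in\mc O_\la$, every simple quotient of $\theta L(\la)$ has anti-dominant highest weight (by the adjunction $\Hom(M(\la)\otimes V,L(\mu))\cong\Hom(M(\la),L(\mu)\otimes V^\ast)$) and central character $\chi$ and lies in $\mc O_\la$, hence equals $L(\la)$; applying $\Gamma_\zeta$ to $0\to\rad\theta L(\la)\to\theta L(\la)\to\text{top}\,\theta L(\la)\to 0$ then gives the top-length inequality, and the Hom-injectivity closes the chain.
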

 
Based on previous  results by various authors, we give a complete classification of Kostant positive simple Whittaker modules over $\mf{sl}(2)$ for the first time; see Corollary \ref{cor::13}. For type A Lie algebras, a family of negative examples of non-singular simple Whittaker modules can be found in Example  \ref{ex::nonKo}, including Joseph's $\mf{sl}(2)$-example from \cite[Section 2.3]{Jo2}.

 \subsubsection{} \label{sect::144} 
The final piece of motivation   for the present paper is the Kostant's problem for Whittaker modules over quasi-reductive Lie superalgebras. We are mainly interested in the following  quasi-reductive 	Lie superalgebras from Kac’s list \cite{Ka1}:
\begin{align*}
&\widetilde{\g} = \gl(m|n), \mf{osp}(2|2n),\text{ or } \pn.
\end{align*} In particular, these Lie superalgebras are the so-called {\em type I Lie superalgebras}, that is, each of them is equipped with a compatible $\Z$-grading $\widetilde{\g}=\widetilde{\g}_{-1}\oplus\widetilde{\g}_0\oplus \widetilde{\g}_1$, which we refer to as a {\em type-I} grading in the paper.    
In what follows, we let $$\g:=\widetilde{\g}_\oa,$$ and use the same notations and terminologies   defined for  structures and representations of $\g$ in the paper. 	Fix a triangular decomposition of $\widetilde{\g}$ in the sense of \cite{Ma}:
		\begin{align}
		&\widetilde{\mf g}=\widetilde{\mf n}_-\oplus \h \oplus \widetilde{\mf n}_+,\label{defn::triansup}
		\end{align} with purely even Cartan subalgebra $\h$ and nilradicals $\widetilde{\mf n}_{\pm}$, extending the triangular decomposition of ${\g}$ in   \eqref{defn::trian}, that is, $\widetilde{\mf n}_{\pm} = \mf n_\pm\oplus \widetilde{\mf g}_{\pm1}$.  Denote by $\widetilde{\mc O}$ the  BGG category of $\widetilde{\g}$-modules corresponding to the Borel subalgebra $\mf h+\widetilde{\mf n}_+$. % It  is well-known that the Weyl group of a  Lie superalgebra does not control the blocks of $\widetilde{\mc O}$ in general. 
	The blocks of the category $\widetilde{\mc O}$ of a basic classical Lie superalgebra that are controlled by the Weyl group are called {\em typical}; see also \cite{Go}. A weight is said to be typical provided that it is a highest weight of a simple module in a typical block. For the Lie superalgebra $\pn$, we refer to \cite{Se02} for more details about the typical weights; see also subsection \ref{sect::3222}.
	
  Rather
		than attempting to repeat the definitions and properties of Whittaker modules over $\widetilde{\g}$ here, we refer the reader to \cite[Section 3]{Ch21} and subsections \ref{sect::31}, \ref{sect::313} for more details. In particular, the construction of standard Whittaker modules affords a natural generalization to representations of $\widetilde{\g}$, which we shall denote by $\widetilde{M}(\la,\zeta)$, and they are indexed by $\zeta\in \ch \mf n_+$ and $W_\zeta$-anti-dominant weights $\la\in \h^\ast$.	We shall denote the (simple) top of  $\widetilde{M}(\la,\zeta)$  by $\widetilde{L}(\la,\zeta)$.  Again,  $\widetilde{M}(\la) := \widetilde{M}(\la,0)$ and $\widetilde{L}(\la) := \widetilde{L}(\la,0)$ coincide with the Verma module of highest weight $\la$ and its simple quotient, respectively.  For given two Whittaker modules $M,N$ over $\widetilde{\g}$, 	 the $\g$-bimodule $\mc L(M,N)$  is also a $\widetilde{\g}$-bimodule as $U(\widetilde{\g})$ is a finite extension of $U(\g)$.  The Kostant's problem for Whittaker modules over $\widetilde{\g}$ is formulated in the same way. Similarly, we define Kostant positive and negative Whittaker modules.

  Our third main result is the following; see   Theorems \ref{mainth3}, \ref{thm::13}, \ref{lem::16}, \ref{mainthm3} and Corollary \ref{cor::einThmC}.

\begin{thmC} \label{mainthm2} Let  $\widetilde \g$ be a quasi-reductive Lie superalgebra with $\zeta\in \ch\mf n_+$ and a  $W_\zeta$-anti-dominant weight  $\la\in \h^\ast$.  
		\begin{enumerate}
			\item[(1)] Suppose that $\widetilde \g$ is either $\gl(m|n)$ or $\mf{osp}(2|2n)$. Let $\widetilde \g\mod$ be the category of finitely-generated $\widetilde{\g}$-modules and   $K(\_):\g\mod\rightarrow\widetilde{\g}\mod$ be the Kac induction functor  (see subsection \ref{sect::333}). Then, for any $\g$-module $V$ that admits the central character $\chi_\la$, the $\widetilde \g$-module $K(V)$ is Kostant positive if and only if $\la$ is typical and $V$ is Kostant positive. In particular, the following are equivalent:
			\begin{itemize}
				\item[(a)] $\widetilde{M}(\la,\zeta)$ is Kostant positive.
				\item[(b)]   $M(\la,\zeta)$ is Kostant positive, and $\la$ is typical. 
			\end{itemize}
					\item[(2)] Suppose that $\widetilde{\g}$ is one of the Lie superalgebras $\gl(m|n),$ $\mf{osp}(2|2n)$ and $\pn$. Assume that    $\langle \la,\alpha^\vee\rangle \in \Z$, for any $\alpha\in \Pi_\zeta$. Then we have 
					\begin{itemize}
						\item[(c)] $\widetilde{M}(\la,\zeta)$ is Kostant positive if and only if $\la$ is typical.
							\item[(d)]  $\widetilde{L}(\la,\zeta)$ is Kostant positive if and only if  $\widetilde{L}(\la)$ is.	
								\item[(e)] If $\la$ is typical, then the answers to Kostant's problem for $\widetilde{L}(\la,\zeta)$, $\widetilde{L}(\la)$ and $L(\la)$ are the same.  
					\end{itemize}  
		\end{enumerate}
		%In case that $\langle \la+\rho,\alpha^\vee\rangle \in \Z$, for any $\alpha\in \Pi_\zeta$, the standard Whittaker module $\widetilde{M}(\la,\zeta)$ is Kostant positive if and only if $\la$ is typical.
\end{thmC}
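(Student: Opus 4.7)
The plan is to exploit the type-I $\Z$-grading $\widetilde{\g} = \widetilde{\g}_{-1}\oplus \g \oplus \widetilde{\g}_{1}$ through the Kac induction functor $K$, which, for $\gl(m|n)$ and $\mf{osp}(2|2n)$, sends the standard Whittaker $\g$-module $M(\la,\zeta)$ to its $\widetilde{\g}$-counterpart $\widetilde{M}(\la,\zeta)$. This reduces the comparison of the two representation maps $\phi_{\widetilde{\g}}$ and $\phi_\g$ to a question about how the bimodule $\mc L(K(V), K(V))$ is built from $\mc L(V, V)$, with the obstruction to Kostant positivity being controlled precisely by the typicality of $\la$. For $\pn$, where Kac induction is not compatible with strong typicality in the same way, one has to combine an annihilator-level argument with Serganova's framework from \cite{Se02}.

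For Part (1), I would first observe that as a $\g$-module $K(V) \cong \Lambda\widetilde{\g}_{-1}\otimes V$, so $\Hom_\C(K(V), K(V))$ carries a natural filtration whose associated graded is built from copies of $\mc L_\g(V,V)$ tensored with exterior powers of $\widetilde{\g}_{\pm 1}$. Passing to the $\widetilde{\g}$-ad-locally-finite-semisimple part $\mc L_{\widetilde{\g}}(K(V), K(V))$ and invoking Gorelik's strong typicality result from \cite{Go} (together with its $\mf{osp}(2|2n)$ counterpart), for typical $\la$ the Harish-Chandra projection gives a tight enough description that surjectivity of $\phi_{\widetilde{\g}}$ becomes equivalent to surjectivity of $\phi_\g$. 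For atypical $\la$, I would produce explicit elements of $\mc L_{\widetilde{\g}}(K(V), K(V))$ coming from projective endo-functors at atypical translation walls that cannot lie in the image of $U(\widetilde{\g})$, thereby obstructing surjectivity regardless of whether $V$ is Kostant positive. Specializing to $V = M(\la,\zeta)$ and using Theorem A(1) under the integrality hypothesis yields the equivalence (a) $\Leftrightarrow$ (b).

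For Part (2), item (c) in the cases $\gl(m|n)$ and $\mf{osp}(2|2n)$ is immediate from Part (1) combined with Theorem A(1); for $\pn$ I would substitute the analogue of Gorelik's typicality criterion provided by Serganova in \cite{Se02} and carry out the same annihilator comparison directly at the level of $Z(\widetilde{\g})$-primitive ideals, bypassing Kac induction. For item (d), I would mimic the argument of Theorem A(2): realize $\widetilde{L}(\la,\zeta)$ as the simple top of $\widetilde{M}(\la,\zeta)$ and apply a suitable projective endo-functor together with the Mazorchuk-K\aa hrstr{\" o}m style twisting-functor machinery from \cite{Ma05, KMM23} in the super setting, reducing Kostant positivity of $\widetilde{L}(\la,\zeta)$ to that of $\widetilde{L}(\la)$. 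Finally, item (e) follows by combining (d) with the typical-case reduction of Part (1) applied to $V = L(\la)$ and with Theorem A(2), creating a three-way chain $\widetilde{L}(\la,\zeta) \leftrightarrow \widetilde{L}(\la) \leftrightarrow L(\la)$ of equivalent Kostant positivity statements.

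The main obstacle I anticipate is the $\pn$ case of Part (2). Unlike $\gl(m|n)$ and $\mf{osp}(2|2n)$, the periplectic Lie superalgebra lacks a quadratic Casimir, its center is small, and typicality is defined via Duflo-Serganova-type cohomology rather than through central characters, so Part (1) as stated is unavailable. One must argue directly at the level of annihilator ideals, proving that for typical $\la$ the ideal $\Ann_{U(\widetilde{\g})}(\widetilde{M}(\la,\zeta))$ is controlled by its image in $U(\g)$ through a suitable enlargement of $Z(\widetilde{\g})$. A secondary technical point is verifying that the projective endo-functors used in (d) remain well-behaved and indecomposable-summand-preserving in the super setting; here the integrality hypothesis on $\Pi_\zeta$ is essential in order to have the appropriate translation functors available.
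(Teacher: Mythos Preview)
Your high-level outline---Kac induction transfers Kostant's problem between $\g$ and $\widetilde{\g}$, with typicality as the obstruction---is right, but the concrete mechanisms you propose are not the ones that carry the proof, and at two points they would fail. For Part~(1), neither Gorelik's strong-typicality theorem (which treats Verma modules, not arbitrary $V$) nor a translation-wall construction is what is needed. The linchpin is an elementary identity in $U(\widetilde{\g})$: for nonzero $X^\pm\in\Lambda^{\max}(\widetilde{\g}_{\pm1})$ one has $X^+X^- = \Omega + \sum_i x_i^- r_i x_i^+$ with $\Omega\in Z(\g)$, $x_i^\pm\in\widetilde{\g}_{\pm1}\Lambda(\widetilde{\g}_{\pm1})$, and $\chi_\la(\Omega)\neq 0$ exactly when $\la$ is typical (\cite[Lemma~5.6]{CM}). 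The atypical obstruction is then a single explicit map: $\widehat{\mathrm{Id}_V}\colon K(V)\to K(V)$ sending $X^-\otimes v\mapsto v$ and killing lower exterior degrees always lies in $\mc L(K(V),K(V))$, and if it came from some $X\in U(\widetilde{\g})$, PBW plus the identity above forces $\chi_\la(\Omega)\neq 0$. Your translation-wall idea does not produce such an element for general $V$. For the converse, with $\la$ typical one uses the same identity to write down, for each $f\in\mc L(V,V)$ realized by $r\in U(\g)$, preimages $\chi_\la(\Omega)^{-1}\,y\,r\,X^+\,y'$ hitting every piece of $\mc L(K(V),K(V))\cong\Lambda(\widetilde{\g}_{-1})\otimes\mc L(V,V)\otimes\Lambda(\widetilde{\g}_{-1})^\ast$.

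The $\pn$ case of (2)(c) is where your proposal actually breaks: you suggest arguing via $Z(\widetilde{\g})$-primitive ideals, but as you yourself note, $Z(\pn)$ is too small to see typicality, so this route is closed. The paper does \emph{not} bypass Kac induction here. One first shows (via simplicity of $K(L(\mu))$ for weakly typical $\mu$, \cite[Lemma~3.1]{Se02}, \cite[Proposition~4.1]{CP}) that Kostant positivity of $\widetilde{M}(\la)=K(M(\la))$ forces $\la$ to be \emph{weakly} typical; then a separate reflection lemma---comparing $\mc L(\widetilde{M}(\la),\widetilde{M}(\la))$ with $\mc L(\widetilde{M}(s_\alpha\cdot\la),\widetilde{M}(s_\alpha\cdot\la))$ along $\widetilde{M}(\la)\hookrightarrow\widetilde{M}(s_\alpha\cdot\la)$, or via Arkhipov twisting when $\langle\la+\rho,\alpha^\vee\rangle\notin\Z$---propagates Kostant positivity across simple reflections and upgrades weakly typical to typical; the forward implication is Serganova's \cite[Theorem~5.7]{Se02}. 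Finally, Part~(2)(d) is not a twisting-functor argument \`a la \cite{Ma05,KMM23}; it is the verbatim super analogue of Theorem~A, proved with the super Backelin functor $\widetilde{\Gamma}_\zeta$ and the super Mili\v{c}i\'c--Soergel functor $\widetilde{T}_{\nu,\zeta}$ from \cite{Ch21,CCM}, together with the annihilator equalities $\Ann_{U(\widetilde{\g})}\widetilde{L}(\la,\zeta)=\Ann_{U(\widetilde{\g})}\widetilde{L}(\la)$ from \cite{Ch212}.
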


 In the special case $\zeta=0$, the implication $(b)\Rightarrow(a)$ of Part $(1)$ in Theorem C recovers   \cite[Proposition 9.4]{Go}. However, in loc. cit. the positive answers have been established for all basic classical Lie superalgebras. %In the case when $\la$ is integral,   the answer to Kostant's problem for $\widetilde{M}(\la,\zeta)$ is determined by the typicality of $\la$.

%	As a consequence of Theorem C, we prove  that this is a necessary and sufficient  condition. 
%\begin{thmC}  Assume that  and  $\langle \la+\rho,\alpha^\vee\rangle \in \Z$, for any $\alpha\in \Pi_\zeta$. Then we have 	\begin{itemize} 		\item[(1)]  $\widetilde{L}(\la,\zeta)$ is Kostant positive if and only if $\widetilde{L}(\la)$ is.		\item[(2)] If $\la$ is typical, then the answers to Kostant's problems for $\widetilde{L}(\la,\zeta)$, $\widetilde{L}(\la)$ and $L(\la)$ are the same.   	\end{itemize} \end{thmC}

\subsection{Structure of the paper}
%The paper is organized as follows. %In Section \ref{Sect1}, we establish Theorems A and B. 
 Before giving complete proofs of the main results, we provide some background materials  %including  projectively presentable modules and Whittaker modules, 
 and develop several preparatory results in subsections \ref{sect::220} and \ref{sect::230}. %In particular, we relate our main results with two exact functors from the category $\mc O$ to the category of Whittaker modules. 
 The proof of Theorem A is established in subsection  \ref{sect::22}.  In subsection \ref{sect::24}, we focus on Kostant's problem for non-singular simple Whittaker modules and give a proof of Theorem B. %Section \ref{sect::Kosuper} is devoted to a proof of Theorem C. 
 We explain our setup of Lie superalgebras and their representations in subsection \ref{sect::311}.  In subsection \ref{sect::3222} one can  find detailed examples of     Lie superalgebras of type I. We develop the Kac functor tools in subsection \ref{sect::333}. Finally, in subsections \ref{sect::32} and \ref{sect::33} we bring all of the above together to prove Theorem C.

\vskip 0.3cm
{\bf  Acknowledgment}. The author is partially supported by National Science and Technology Council grants of the R.O.C. and further acknowledges support from  the National Center for Theoretical Sciences. The author would like to thank Shun-Jen Cheng and Volodymyr Mazorchuk for  interesting discussions and helpful comments.
\vskip 0.3cm

 %\section{Proof of Theorem A} 
 \section{Kostant's problem for Whittaker modules over Lie algebras} 
\label{Sect1}

  The   goal of this section is to complete the proofs of Theorem A and Theorem B.  From the point of view of the following canonical isomorphisms  (see, e.g., \cite[6.8]{Ja}) 
\begin{align}
&\Hom_\g(V, \mc L(M,N)^\ad)\cong \Hom_\g(M\otimes V,N)\cong \Hom_\g(M,N\otimes V^\ast), \label{eq::Jo68iso}
\end{align}  for any   $\g$-modules $M, N$ and finite-dimensional weight $\g$-module $V$, the conventional wisdom of solving Kostant's problem for a given $\g$-module $M$ %that admits a central character of $\g$ 
is to estimate the dimension $\dim \Hom_\g(M\otimes V, M)$. 
Our strategy is to relate the main results with two exact functors from  the  category $\mc O$ to the category of Whittaker modules to determine the dimensions for standard and simple Whittaker modules.

	Recall that a weight $\nu\in \h^\ast$ is said to be {\em dominant} provided that $\langle \la+\rho,\alpha^\vee \rangle\not\in \Z_{<0},$ for all  $\alpha\in \Phi^+$. For such a weight, we denote by $W_\nu$ the stabilizer of $\nu$ under the dot-action of the Weyl group $W$. Let $\Pi_\nu$ be the subset $\Pi$ of simple roots $\alpha$ such that $\langle \nu+\rho, \alpha^\vee\rangle =0$ (compare with $\Pi_\zeta$ from Section \ref{sect::themainresults}). Note that $W_\nu$ can be identified with the Weyl group of a Levi subalgebra generated by $\g^{\alpha}$ ($\pm\alpha\in \Pi_\nu$) and $\h$. Let, further,   $\Lambda(\nu)\subset  \mf h^\ast$ denote the set of all weights $\la\in \nu+\Lambda$ that is anti-dominant with respect to the dot-action of $W_\nu$.   We denote by    $\mc F$    the full subcategory of $\mc O$ consisting of  all finite-dimensional weight  $\g$-modules.   %For such a weight, we denote by $W_[\nu]$ its integral Weyl group, that is, $W_[\nu]$ is the subgroup of $W$ consisting of elements $w$ such that $w\la-\la$ lies in the $\Z$-span of $\Phi$ in $\h^\ast$; see, e.g., \cite[Section 3.4]{Hu08}. 

%In this section, we fix an arbitrary character $\zeta \in \mf n_+$. For a given dominant weight $\nu\in \h^\ast$ with $W_\nu=W_\zeta$, we let  $\Lambda(\nu)\subset  \mf h^\ast$ denote the set of all weights $\la\in \nu+\Lambda$ that is $W_\zeta$-anti-dominant.  

 \subsection{Projectively presentable modules in $\mc O$} \label{sect::220}  In this subsection we introduce a full subcategory of $\mc O$ and present a  preparatory result for the use in later sections. 
 
 Fix a dominant weight $\nu\in \h^\ast$. A projective module $Q\in \mc O$ is called {\em $\nu$-admissible} if each simple quotient of $Q$ has highest weight lying in $\Lambda(\nu)$. Following \cite[Section 4.1]{CCM}, we let $\mc O^{\nu\text{-pres}}$ denote the full subcategory of $\mc O$ consisting of modules $M$  that have a two step presentation of the form
 \begin{align*}
 	&Q_2\rightarrow Q_1\rightarrow M\rightarrow 0,
 \end{align*}
 where both $Q_1$ and $Q_2$ are $\nu$-admissible projective modules in $\mc O$. We refer to  \cite{MaSt04} for further details; see also \cite{KoMa21,MaPW20}.
 
 Let $\alpha\in \Pi$. A module $N\in \mc O$ is called {\em $\alpha$-free} (resp. {\em $\alpha$-finite})  if every non-zero root vector  $f_\alpha\in \g^{-\alpha}$ acts freely on $N$ (resp. acts locally finitely on $N$).   The following lemma will be useful later on.
 \begin{lem} \label{lem::55} Let $\nu$ be a dominant weight. 
 	Suppose that there are the following short exact sequences in $\mc O$
 	\begin{align}
 		&0\rightarrow C\rightarrow M\rightarrow X\rightarrow 0, \label{eq::22} \\
 		&0\rightarrow C'\rightarrow M'\rightarrow Y\rightarrow 0, \label{eq::23}
 	\end{align}  satisfying the following conditions:
 	\begin{itemize}
 		\item[(a)]  $M, M'\in\mc O^{\nu\text{-pres}}$.
 		\item[(b)] All composition factors of $C, C'$ are $\alpha$-finite, for any $\alpha\in \Pi_\nu$.
 		\item[(c)] Both tops and socles of $X, Y$ are $\alpha$-free, for any $\alpha\in \Pi_\nu$. 
 	\end{itemize}  
 	Then we have  $\Hom_\g(M, M')\cong \Hom_\g(X,Y),$ as vector spaces.
 	%Let $X,Y\in \mc O$ such that any simple quotient $L(\la)$ of $X$ or $Y$ satisfies that $\la \in \Lambda$.  
 \end{lem}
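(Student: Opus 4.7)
The plan is to factor the desired isomorphism through the intermediate group $\Hom_\g(M,Y)$, establishing in turn
\[
\Hom_\g(M,M') \;\cong\; \Hom_\g(M,Y) \;\cong\; \Hom_\g(X,Y).
\]
The combinatorial input is the dichotomy that every simple highest weight module $L(\mu)$ is either $\alpha$-free or $\alpha$-finite but never both, together with the observation that if $\mu \in \Lambda(\nu)$ (i.e.\ $\mu$ is $W_\nu$-antidominant) then $L(\mu)$ is $\alpha$-free for every $\alpha \in \Pi_\nu$.

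For the first isomorphism I would apply $\Hom_\g(M,-)$ to \eqref{eq::23} and read off the long exact sequence
\[
0 \to \Hom_\g(M,C') \to \Hom_\g(M,M') \to \Hom_\g(M,Y) \to \Ext^{1}_\g(M,C') \to \cdots,
\]
reducing everything to the vanishings $\Hom_\g(M,C') = \Ext^{1}_\g(M,C') = 0$. For this I would invoke (a) to fix a presentation $Q_2 \to Q_1 \to M \to 0$ by $\nu$-admissible projectives and, using the projectivity of $Q_1$, package both vanishings into $\Hom_\g(Q_i,C') = 0$ for $i=1,2$. Since each $Q_i$ is a direct sum of modules of the form $P(\mu)$ with $\mu \in \Lambda(\nu)$, and $\dim \Hom_\g(P(\mu),N) = [N:L(\mu)]$ for any $N \in \mc O$, this amounts to noting that no $L(\mu)$ with $\mu \in \Lambda(\nu)$ (hence $\alpha$-free for $\alpha \in \Pi_\nu$) can occur among the composition factors of $C'$, which are all $\alpha$-finite by (b).

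For the second isomorphism I apply $\Hom_\g(-,Y)$ to \eqref{eq::22} and get the left exact sequence
\[
0 \to \Hom_\g(X,Y) \to \Hom_\g(M,Y) \to \Hom_\g(C,Y),
\]
so the task reduces to $\Hom_\g(C,Y) = 0$. Given any nonzero $\varphi \colon C \to Y$, its image has nonzero socle sitting inside $\mathrm{soc}(Y)$, which by (c) contains a simple submodule $L(\eta)$ that is $\alpha$-free for every $\alpha \in \Pi_\nu$; on the other hand $L(\eta)$, being a composition factor of $\varphi(C)$ hence of $C$, must be $\alpha$-finite by (b). This contradicts the dichotomy.

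The substantive step is the first one, where projectivity of $Q_1$ and $Q_2$ bootstraps a $\Hom$-vanishing into an $\Ext^1$-vanishing via the presentation of $M$; the second step is essentially a socle computation. In this approach only condition (a), the full (b), and the $\alpha$-freeness of $\mathrm{soc}(Y)$ from (c) are actively used; the remaining content of (c) and the hypothesis $M' \in \mc O^{\nu\text{-pres}}$ appear to be kept for symmetry and for convenience in later applications of the lemma.
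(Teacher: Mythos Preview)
Your proof is correct and follows essentially the same approach as the paper: both factor through $\Hom_\g(M,Y)$, use the $\nu$-admissible projective presentation of $M$ to deduce $\Hom_\g(Q_i,C')=0$ (the paper packages this as a commutative diagram rather than invoking $\Ext^1$ explicitly, but the content is identical), and finish with the vanishing $\Hom_\g(C,Y)=0$ via condition~(c). Your observation that only part of (c) and the hypothesis $M'\in\mc O^{\nu\text{-pres}}$ are actually needed is also accurate.
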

 \begin{proof}
 	By assumption in Part (a), there are  $\nu$-admissible projective modules $P, K, P', K'$ such that $M\cong P/K$ and $M'\cong P'/K'$.  Applying the functors $\Hom_\g(-,M')$, $\Hom_\g(-,Y)$ to the   exact sequence in  $P\rightarrow K\rightarrow M \rightarrow 0$  and applying the exact functors $\Hom_{\g}(P,-)$, $\Hom_{\g}(K,-)$ to the short exact sequence in \eqref{eq::23}, respectively, we obtain the following exact sequences:

 	\begin{align}
 		&\xymatrixcolsep{2pc} \xymatrix{
 			&& 0 \ar[d]  & 0\ar[d]   \\
 			&& \Hom_\g(K,C')\ar[d] & \Hom_\g(P,C') \ar[d]  \\
 			0 \ar[r] & \Hom_\g(M,M') \ar[r]   \ar@<-2pt>[d]  &  \Hom_\g(K,M') \ar[r]  \ar@<-2pt>[d]  & \Hom_\g(P, M') \ar[d]\\ 0 \ar[r] &
 			\Hom_\g(M,Y) \ar[r]    &   \Hom_\g(K,Y) \ar[r] \ar[d]& \Hom_\g(P,Y)\ar[d] \\
 			&&0&0 } \label{eq::dia414}
 	\end{align} It follows that  $\Hom_\g(M,M')\cong \Hom_{\g}(M,Y)$,  since $ \Hom_\g(K,C')=\Hom_\g(P,C')=0$ by condition (b).
 	
 	Next, we apply $\Hom_\g(-,Y)$ to the short exact sequence in \eqref{eq::22}, then we get an exact sequence
 	\begin{align*}
 		&0\rightarrow \Hom_\g(X,Y)\rightarrow \Hom_\g(M,Y)\rightarrow \Hom_\g(C,Y).
 	\end{align*}  We obtain that  $\Hom_\g(X,Y)\cong \Hom_\g(M,Y)$, since $\Hom_\g(C,Y)=0$ by condition (c). The conclusion follows.  
 \end{proof}

 \subsection{Whittaker category $\mc N$} \label{sect::230}
  Following \cite{MS}, we denote by $\mc N$ the category of all Whittaker modules over $\g$. The category $\mc N$ decomposes as $\mc N = \bigoplus_{\zeta\in \ch \mf n_+} \mc N(\zeta)$ with the full subcategories $\mc N(\zeta)$ consisting of Whittaker modules $M$ on which $x-\zeta(x)$ acts locally nilpotently, for any $x\in \mf n_+$. By \cite[Theorem 2.6]{MS}, any  module $M\in \mc N$ is of finite length. Furthermore, the category $\mc N$ is closed under tensoring with finite-dimensional weight modules over $\g$; see \cite[Sections 4,  5]{MS}. We fix a character $\zeta\in \ch \mf n_+$ throughout this subsection. %As a consequence, for a given $M\in \mc N$, it follows by \eqref{eq::Jo68iso} that $M$ is Kostant positive if and only if $$\dim\Hom_\g(M\otimes V, V) = \dim \Hom_\g(V, (U(\g)/\Ann_{U(\g)}M)^\ad),$$ for any $V\in \mc F$.

  \subsubsection{A Mili{\v{c}}i{\'c}-Soergel type functor} \label{sect::221} Let $\nu \in \h^\ast$ be a dominant weight such that $W_\nu =W_\zeta$. Consider the following  exact functor from \cite[Section 7.3.1]{CCM}: 
 \begin{align}
 	&T_{\nu,\zeta}(-):\mc L(M(\nu),-)\otimes_{U(\g)} M(\nu,\zeta): \mc O \rightarrow \mc N(\zeta).
 \end{align}  In loc. cit., this functor is introduced for more general Lie superalgebras. In our present setup of reductive Lie algebras, it is a combination of two classical functors, that is, the exact functor $\mc L(M(\nu), -)\otimes_{U(\g)}-: \mc   O\rightarrow \mc B_\nu$ studied in \cite[Kapitel 6]{Ja} and \cite{BG} and the exact functor $-\otimes_{U(\g)}M(\nu,\zeta): \mc B_\nu\rightarrow \mc N(\zeta)$ studied in \cite[Theorem 5.3]{MS}, where $\mc B_\nu$ is the category of Harish-Chandra $\g$-bimodules that admit the central character $\chi_\nu$ on right sides. The functor $T_{\nu,\zeta}$ restricts to a fully faithful functor from $\mc O^{\nu\text{-pres}}$ to  $\mc N(\zeta)$. Furthermore, $T_{\nu,\zeta}(M(\mu))\cong M(\mu,\zeta)$ for $\mu\in \nu+\Lambda$,  $T_{\nu,\zeta}(L(\mu))\cong L(\mu,\zeta)$ for $\mu\in \Lambda(\nu)$,  and $T_{\nu,\zeta}(L(\mu))=0$ otherwise; see \cite[Proposition 5.15]{MS}. In fact, if we let $\mc W(\zeta)$ denote the target category  $T_{\nu,\zeta}(\mc O)$, then $T_{\nu,\zeta}(-): \mc O \rightarrow \mc   W(\zeta)$ satisfies the universal property of a certain Serre quotient category (in the sense of Gabriel \cite{Gabriel}); see \cite[Theorem 37]{CCM}. The following is a consequence of Lemma \ref{lem::55}.
 \begin{cor} \label{cor::2}
 	Fix a dominant weight $\nu$ such that $W_\nu=W_\zeta$.
 	Let $X,Y\in \mc O$ such that their weight supports are contained in $\nu +\Lambda$. Suppose that the tops and socles of $X,Y$ are $\alpha$-free, for any $\alpha\in \Pi_\zeta$.  Then we have a vector space isomorphism  $  \Hom_\g(X,Y)\cong \Hom_\g(T_{\nu,\zeta}(X),T_{\nu,\zeta}(Y))$.
 \end{cor}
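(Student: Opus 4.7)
The plan is to derive the Corollary from Lemma~\ref{lem::55} together with the full faithfulness of $T_{\nu,\zeta}$ restricted to $\mc O^{\nu\text{-pres}}$. For each of $X$ and $Y$ I would produce short exact sequences
\[
0 \to C \to M \to X \to 0, \qquad 0 \to C' \to M' \to Y \to 0,
\]
in $\mc O$ such that $M,M' \in \mc O^{\nu\text{-pres}}$ and such that every composition factor of $C$ and $C'$ is $\alpha$-finite for every $\alpha \in \Pi_\nu = \Pi_\zeta$. Granting such sequences, the hypothesis that the tops and socles of $X,Y$ are $\alpha$-free for every $\alpha\in\Pi_\zeta$ supplies condition~(c) of Lemma~\ref{lem::55}, so the lemma yields $\Hom_\g(M,M')\cong\Hom_\g(X,Y)$. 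Being $\alpha$-finite for every $\alpha\in\Pi_\nu$ forces each simple composition factor $L(\mu)$ of $C$ or $C'$ to satisfy $\langle\mu+\rho,\alpha^\vee\rangle\in\Z_{>0}$ for all $\alpha\in\Pi_\nu$, hence $\mu\notin\Lambda(\nu)$, and therefore $L(\mu)$ lies in $\ker T_{\nu,\zeta}$; exactness of $T_{\nu,\zeta}$ then yields $T_{\nu,\zeta}M\cong T_{\nu,\zeta}X$ and $T_{\nu,\zeta}M'\cong T_{\nu,\zeta}Y$. Combined with the full faithfulness of $T_{\nu,\zeta}$ on $\mc O^{\nu\text{-pres}}$ this gives the chain
\[
\Hom_\g(X,Y)\cong\Hom_\g(M,M')\cong\Hom_\g(T_{\nu,\zeta}M,T_{\nu,\zeta}M')\cong\Hom_\g(T_{\nu,\zeta}X,T_{\nu,\zeta}Y),
\]
which is the desired isomorphism.

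For the construction of the sequences, the hypothesis that $\mathrm{top}(X)$ is $\alpha$-free for all $\alpha\in\Pi_\zeta$ combined with the weight support assumption forces each simple summand of $\mathrm{top}(X)$ to be of the form $L(\mu)$ with $\mu\in\Lambda(\nu)$. Hence the projective cover $P_1\twoheadrightarrow X$ in $\mc O$ is automatically $\nu$-admissible. I would then take $M$ to be an appropriate partial $\nu$-admissible coapproximation of $X$ obtained from $P_1$: writing $K_1=\ker(P_1\twoheadrightarrow X)$, form the quotient $M:=P_1/K_1^\ast$ by the largest submodule $K_1^\ast\subseteq K_1$ for which the induced two-step projective resolution of $M$ remains $\nu$-admissible, so that $M\in\mc O^{\nu\text{-pres}}$, while $C:=K_1/K_1^\ast$ has all composition factors $L(\mu)$ with $\mu$ strongly $W_\nu$-dominant (equivalently, $L(\mu)\in\mc O^{\mf p_\nu}$, hence $\alpha$-finite for every $\alpha\in\Pi_\nu$). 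This is exactly the kind of partial coapproximation developed in \cite{MaSt04} and \cite[Section~4]{CCM}. The analogous construction applied to $Y$ supplies $M'$ and $C'$.

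The main technical obstacle is the balancing act in the coapproximation step: one must choose $K_1^\ast$ so that simultaneously $M=P_1/K_1^\ast$ admits a two-step $\nu$-admissible projective presentation and the kernel $C=K_1/K_1^\ast$ is built entirely of simples in the parabolic subcategory $\mc O^{\mf p_\nu}$. Verifying this requires a careful inspection of the top of $\ker(P_1\twoheadrightarrow M)$ and an $\mathrm{Ext}^1$-analysis separating $\alpha$-free from $\alpha$-finite simples in $\mc O$. Should this direct route prove cumbersome, a conceptual alternative is to take $M$ to be the unique $\mc O^{\nu\text{-pres}}$-representative of $T_{\nu,\zeta}X$ under the Serre quotient equivalence $\mc W(\zeta)\simeq\mc O/\ker T_{\nu,\zeta}$ recorded in \cite[Theorem~37]{CCM}, and to extract the short exact sequence from the canonical comparison morphism $M\to X$; either approach feeds the resulting data into Lemma~\ref{lem::55} to finish the proof.
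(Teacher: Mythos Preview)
Your approach is essentially the paper's own: the paper defines $K$ (your $K_1^\ast$) as the sum of all images of homomorphisms from $\nu$-admissible projectives into $\ker(P_1\twoheadrightarrow X)$, which coincides with your ``largest submodule'' description and makes the verification that $M\in\mc O^{\nu\text{-pres}}$ and $\Hom_\g(Q,C)=0$ for $\nu$-admissible projectives $Q$ immediate, with no coapproximation subtleties to balance. One minor overclaim: the composition factors of $C$ need not all lie in $\mc O^{\mf p_\nu}$---the trace construction only forces each to have highest weight outside $\Lambda(\nu)$, i.e.\ to be $\alpha$-finite for \emph{some} $\alpha\in\Pi_\nu$---but this weaker property is precisely what Lemma~\ref{lem::55} and the vanishing of $T_{\nu,\zeta}$ require, so your argument goes through unchanged.
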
 
 \begin{proof}
 	Let $P$ and $P'$ be the projective covers of $X$ and $Y$ in $\mc O$, respectively. We note that both $P$ and $P'$ are $\nu$-admissible. Let $K$ and $K'$ be the sum of all images of homomorphisms from $\nu$-admissible projective modules to the kernel of $P\rightarrow X$ and $P'\rightarrow Y$, respectively. Set $M:=P/K$ and $M':=P'/K'$. Then we get two short exact sequences in $\mc O$
 	\begin{align*}
 		&0\rightarrow C\rightarrow M\rightarrow X\rightarrow 0, \\
 		&0\rightarrow C'\rightarrow M'\rightarrow Y\rightarrow 0,  
 	\end{align*} that satisfy the conditions (a), (b) and (c) in Lemma \ref{lem::55}. As a consequence, we obtain 
 	\begin{align*}
 		&\Hom_\g(X,Y)\cong \Hom_\g(M,M')\cong \Hom_\g(T_{\nu,\zeta}(M),T_{\nu,\zeta}(M'))\cong \Hom_\g(T_{\nu,\zeta}(X),T_{\nu,\zeta}(Y)).
 	\end{align*} 
 \end{proof}
 %Put $M(\la,\zeta)\in \mc N(\zeta)$ and $L(\la,\zeta)$ to be the standard Whittaker module and its simple quotient.  

 \subsubsection{The Backelin functor $\Gamma_\zeta$}
In this subsection, we introduce the exact {\em Backelin's functor} $\Gamma_\zeta(-):\mc O\rightarrow \mc N(\zeta)$ from \cite[Section 3.1]{B}, which is closely related to the functor $T_{\nu,\zeta}(-)$ and of independent interest.  We refer to \cite{B} for more details. 

For a given module $M\in \mc O$, denote by $\ov M$  the completion of $M$ with respect to its weight spaces, that is, $\ov M= \prod_{\la\in \h^\ast} M_\la$, where $M_\la =\{m\in M|~hm=\la(h)m,\text{ for all }h\in \h\}$ are weight subspaces. Then $\Gamma_\zeta(M)$ is defined as the $\g$-submodule of $\ov M$ of all vectors in $\ov M$ on which $x-\zeta(x)$ acts locally nilpotently, for any $x\in \mf n_+$.
	
  %We recall the description of Backelin's functor on the  Verma and simple modules. 
    Recall that    $\mc F$  denotes the category of  finite-dimensional weight  $\g$-modules.  For $\la\in \h^\ast$, it follows by \cite[Proposition 3]{AB} (see also \cite[Proposition 2.3.4]{AB}) that  $$\Gamma_\zeta(M(\la)\otimes V) \cong M(\la,\zeta)\otimes V,~\text{for any }V\in \mc F.$$  In particular, we have the most important case  $\Gamma_\zeta(M(\la))\cong M(\la,\zeta)$ from   \cite[Proposition 6.9]{B}.  Furthermore, in loc. cit., it is shown that if $\la$ is $W_\zeta$-anti-dominant then $\Gamma_\zeta(L(\la)) =L(\la,\zeta)$; otherwise, $\Gamma_\zeta(L(\la))$ is zero.

 % For any weight  $\la\in \h^\ast$, we have $\Gamma_\zeta(M(\la))\cong M(\la,\zeta)$.  Furthermore, by \cite[Proposition 3]{AB} we have $$\Gamma_\zeta(M(\la)\otimes V) \cong M(\la,\zeta)\otimes V,$$ for any finite-dimensional $\g$-module $V$. 
 
% Let $\la$ be a $W_\zeta$-anti-dominant weight. From \cite[Proposition 6.9]{B}, it follows that  $\Gamma_\zeta(M(\la))\cong M(\la,\zeta)$ and $ \Gamma_\zeta(L(\la))\cong L(\la,\zeta)$. %We note that $\Ann_\g L(\la)\subseteq \Ann_\g L(\la,\zeta)$.   Furthermore, by \cite[Proposition 3]{AB} we have $$\Gamma_\zeta(M(\la)\otimes V) \cong M(\la,\zeta)\otimes V,$$ for any finite-dimensional $\g$-module $V$ and $\la \in \h^\ast$.

The following lemma, which relate Kostant's problem with homomorphisms induced by Backelin functors on Whittaker modules, will be very useful.
 \begin{lem} \label{lem::5}
 Let $\zeta\in \ch \mf n_+$.	Suppose that  $\la\in \mf h^\ast$ is a $W_\zeta$-anti-dominant weight. Then the Backelin functor induces the following  injections  
 	\begin{align}
 		&\Gamma_\zeta(-):\Hom_{\g}(M(\la)\otimes V,M(\la))\hookrightarrow \Hom_{\g}(M(\la,\zeta)\otimes V,M(\la,\zeta)), \label{eq::02} \\
 		&\Gamma_\zeta(-):\Hom_{\g}(L(\la)\otimes V,L(\la))\hookrightarrow \Hom_{\g}(L(\la,\zeta)\otimes V,L(\la,\zeta)), \label{eq::03}
 	\end{align}  for any finite-dimensional weight $\g$-module $V$. 
 Furthermore, $M(\la,\zeta)$ is Kostant positive if and only if the induced map    $\Gamma_\zeta(-)$ in \eqref{eq::02} is surjective.
 	%Furthermore, if $\langle \la,\alpha^\vee\rangle\in \Z$ for any $\alpha\in \Pi_\zeta$, then the monomorphism $\Gamma_\zeta(-)$ in \eqref{eq::02} is an isomorphism. 
 \end{lem}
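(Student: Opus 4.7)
\emph{Setup.} I will use three known properties of the Backelin functor $\Gamma_\zeta$: exactness, commutation with tensoring by finite-dimensional weight modules (so $\Gamma_\zeta(M\otimes V)\cong\Gamma_\zeta(M)\otimes V$), and the identifications $\Gamma_\zeta(M(\la))\cong M(\la,\zeta)$ unconditionally and $\Gamma_\zeta(L(\la))\cong L(\la,\zeta)$ under the $W_\zeta$-anti-dominance of $\la$. Applied to morphisms, these immediately give the well-definedness of the linear maps in \eqref{eq::02} and \eqref{eq::03}.

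\emph{Injectivity.} For \eqref{eq::03} a direct argument suffices: if $\Gamma_\zeta(f)=0$ for $f\colon L(\la)\otimes V\to L(\la)$, then by exactness $\Gamma_\zeta(\mathrm{Im}(f))=0$, and since $L(\la)$ is simple the image is $0$ or $L(\la)$, the latter contradicting $\Gamma_\zeta(L(\la))=L(\la,\zeta)\neq 0$. For \eqref{eq::02} I would proceed via the adjunction \eqref{eq::Jo68iso}: it rewrites $\Hom_\g(M(\la)\otimes V,M(\la))\cong\Hom_\g(M(\la),M(\la)\otimes V^\ast)\cong(M(\la)\otimes V^\ast)_\la^{\mf n_+}$ using the universal property of Verma modules, and similarly identifies $\Hom_\g(M(\la,\zeta)\otimes V,M(\la,\zeta))$ with the analogous space of Whittaker generators inside $M(\la,\zeta)\otimes V^\ast=\Gamma_\zeta(M(\la)\otimes V^\ast)$. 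The map induced by $\Gamma_\zeta$ between these generator-spaces is Backelin's Whittakerization, sending each highest-weight vector of weight $\la$ to a Whittaker vector in the weight-completion whose leading $\la$-weight component recovers the original vector; hence it is injective.

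\emph{The ``furthermore'' part.} I would decompose $U(\g)^{\ad}$ and $\mc L(M(\la,\zeta),M(\la,\zeta))^{\ad}$ into isotypic components under the adjoint $\g$-action and use \eqref{eq::Jo68iso} to translate Kostant positivity of $M(\la,\zeta)$ into the family of equalities $\dim\Hom_\g(M(\la,\zeta)\otimes V,M(\la,\zeta))=\dim\Hom_\g(V,U(\g)^{\ad})$, $V\in\mc F$ simple; the right-hand side is the Kostant upper bound that always holds. Since $M(\la)$ is a Verma module and hence Kostant positive by the classical result, the same dimension equality holds with $M(\la)$ in place of $M(\la,\zeta)$ for every $V$. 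Combined with the injection \eqref{eq::02}, this forces surjectivity of \eqref{eq::02} for all $V$ to be equivalent to equality of the two Hom dimensions for all $V$, equivalently, to Kostant positivity of $M(\la,\zeta)$.

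\emph{Main obstacle.} The hardest step will be the injectivity of \eqref{eq::02}: a naive composition-factor argument on $\mathrm{Im}(f)\subseteq M(\la)$ fails, since proper submodules of $M(\la)$ can have every composition factor killed by $\Gamma_\zeta$. The dualization via \eqref{eq::Jo68iso} bypasses this obstruction by reducing injectivity to the non-annihilation property of Backelin's Whittakerization on a single generator-space.
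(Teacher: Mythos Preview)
Your outline is largely sound, but two points deserve comment.

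\textbf{Injectivity of \eqref{eq::02}.} You overcomplicate matters, and your ``main obstacle'' is illusory. The paper argues directly: if $f\colon M(\la)\otimes V\to M(\la)$ is nonzero, view $\mathrm{Im}(f)$ as a nonzero \emph{quotient of the source} $M(\la)\otimes V$. Using the adjunction $\Hom_\g(M(\la)\otimes V,L(\mu))\cong\Hom_\g(M(\la),L(\mu)\otimes V^\ast)$ together with $W_\zeta$-anti-dominance of $\la$, one checks that every simple quotient of $M(\la)\otimes V$ has $W_\zeta$-anti-dominant highest weight and hence is not annihilated by $\Gamma_\zeta$; thus $\Gamma_\zeta(\mathrm{Im}(f))\neq 0$ and $\Gamma_\zeta(f)\neq 0$. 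You looked at $\mathrm{Im}(f)$ as a submodule of the target and declared this hopeless---but in fact even that viewpoint works, since any nonzero submodule of $M(\la)$ contains its simple socle, whose highest weight is anti-dominant (hence $W_\zeta$-anti-dominant) and therefore survives $\Gamma_\zeta$. Your Whittakerization route could presumably be made rigorous, but it requires justifying that the $\la$-weight component of a Whittaker generator of $M(\la,\zeta)\subset\overline{M(\la)}$ is nonzero, and making precise the ``analogous space of Whittaker generators'' on the target side; this is extra work the paper's argument avoids entirely.

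\textbf{The ``furthermore'' part.} Here there is a genuine gap. You write $U(\g)^{\ad}$ where you must mean $(U(\g)/\Ann_{U(\g)}M(\la,\zeta))^{\ad}$; with $U(\g)$ itself the multiplicities are infinite and the equalities you state are false. More importantly, to conclude that the ``Kostant bound'' for $M(\la)$ coincides with that for $M(\la,\zeta)$ you need the equality of annihilators $\Ann_{U(\g)}M(\la)=\Ann_{U(\g)}M(\la,\zeta)$, which the paper invokes explicitly (citing \cite[Lemma~2.2]{MS}). Without this ingredient there is no reason the two quotients of $U(\g)$ agree, and your chain of equivalences breaks down.
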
 
  \begin{proof} We shall only prove the injectivity of $\Gamma_\zeta(-)$ in \eqref{eq::02}. The proof of that for  $\Gamma_\zeta(-)$ in \eqref{eq::03} is similar and will be omitted.

 	%Let $\la$ be $W_\zeta$-anti-dominant. First, we show that \[\Gamma_\zeta(-):\Hom_{\g}(M(\la)\otimes E,M(\la))\rightarrow \Hom_{\g}(M(\la,\zeta)\otimes E,M(\la,\zeta)),\] is always an injection for any $E\in \mc F$. 	To see this, we observe
 	 From the canonical isomorphism \[\Hom_\g(M(\la)\otimes V, L(\mu))\cong\Hom_\g(M(\la), L(\mu)\otimes V^\ast), \text{ for any $\mu\in \h^\ast$ and $V\in \mc F$}, \] we may conclude that any simple quotient of $M(\la)\otimes V$ has $W_\zeta$-anti-dominant highest weight, since $\la$ is  $W_\zeta$-anti-dominant. Let $f\in \Hom_{\g}(M(\la)\otimes V,M(\la))$ be non-zero, for $V\in\mc F$. Then we have $\Gamma_\zeta(\text{Im}(f))\neq 0$. Consider the following short exact sequence in $\mc N(\zeta)$:
 	\begin{align*}
 		&0\rightarrow \Gamma_\zeta(\text{Ker}(f)) \rightarrow M(\la,\zeta)\otimes V\xrightarrow{\Gamma_\zeta(f)} \Gamma_\zeta(\text{Im}(f)) \rightarrow 0. 
 	\end{align*} Suppose that  $\Gamma_\zeta(f) =0$, then we get   $\Gamma_\zeta(\text{Im}(f))=0$, which is  a contradiction.

  By	\cite[Lemma 2.2]{MS},    we have  $\Ann_{U(\g)} M(\la,\zeta) =\Ann_{U(\g)}M(\la)$, which we denote by $J(\la)$. % Since both modules $(U(\g)/J(\la))^{\text{ad}}$ and $\mc L(M(\la,\zeta), M(\la,\zeta))^{\text{ad}}$ are direct sums of simple finite-dimensional $\g$-modules, each	occurring with a finite multiplicity, 
   It follows by the canonical isomorphisms in  \eqref{eq::Jo68iso} that  $M(\la,\zeta)$ is Kostant positive if and only if 
 	\begin{align*}
 		&\dim \Hom_\g(V, (U(\g)/J(\la))^{\text{ad}}) =\dim \Hom_\g(V, \mc L(M(\la,\zeta), M(\la,\zeta))^{\text{ad}}),~\text{for any $V\in \mc F$.}
 	\end{align*} As we have already
 	mentioned in Section \ref{sec1}, $M(\la)$ is Kostant positive, it follows that    $U(\g)/J(\la)$ and $ \mc L(M(\la),M(\la))$ are isomorphic as bimodules.  As a consequence, for any $V\in \mc F$ we have 
 	\begin{align*}
 		&\Hom_\g(V, (U(\g)/J(\la))^{\text{ad}}) \\ &\cong  \Hom_\g(V, \mc L(M(\la), M(\la))^{\text{ad}}) \\
 		&\cong  \Hom_\g(M(\la)\otimes V, M(\la)) \\
 		&\hookrightarrow  \Hom_\g(M(\la,\zeta)\otimes V, M(\la,\zeta))\\
 		&\cong \Hom_\g(V, \mc L(M(\la,\zeta), M(\la,\zeta))^{\text{ad}}).
 	\end{align*}  This completes the proof.  
 \end{proof}

 %\begin{rem} If we drop the condition that $\la \in \Lambda(\nu)$ for some dominant $\nu$ that satisfies $W_\nu =W_\zeta$, then the conclusion in Lemma \ref{lem::5} is no longer true. The following example is taken from \cite{Jo2}, given by Joseph.  \end{rem}
 
 \begin{rem}
  In the case that the integral Weyl group of $\la$ is the same as the group $W_\zeta$, we may fix a dominant weight $\nu\in \la+\Lambda$ with $W_\nu=W_\zeta$ and then construct the corresponding functor $T_{\nu,\zeta}$  from Section \ref{sect::221}. One can establish a completely analogous conclusion of Lemma \ref{lem::5} in terms of the functor $T_{\nu,\zeta}(-)$. %Indeed, the functors $\Gamma_\zeta$ and $T_{\nu,\zeta}$ are isomorphic by \cite[Corollary 38]{CCM}. 
\end{rem} 
 \begin{rem}
  As we have already mentioned, the Kostant's problem for a standard Whittaker module  might have a  negative answer in general, even though  the answer for a Verma module is always positive. Therefore, the induced maps in \eqref{eq::02}, \eqref{eq::03} are not surjective in general.  
\end{rem}

  \subsection{Proof of Theorem A} \label{sect::22}
 We are going to give a complete proof of Theorem A from subsection \ref{sect::142}. We present a more general result in the following theorem, because of its independent interest. 
 	\begin{thm} \label{thm::maintmAintext}
 		  Fix a character $\zeta\in \ch \mf n_+$ and a dominant weight $\nu\in \h^\ast$ with $W_\nu =W_\zeta$.  Suppose that $X\in \mc O$ with weight supports contained in $\nu+\Lambda$ and it satisfies the following conditions:
 		\begin{itemize}
 			\item[(a)]  Both top and socle of $X$ are $\alpha$-free, for any $\alpha\in\Pi_\zeta$.  
 			\item[(b)] $\emph{\Ann}_{U(\g)}(X)\supseteq \emph{\Ann}_{U(\g)}(\Gamma_\zeta(X))$.
 			\item[(c)] $T_{\nu,\zeta}(X) \cong \Gamma_\zeta(X)$.
 		\end{itemize} 
 	Then the answers to Kostant's problem for $X$ and $\Gamma_\zeta(X)$ are the same.
 	 
 In particular, if $\la\in \h^\ast$ is $W_\zeta$-anti-dominant such that  $\langle \la,\alpha^\vee\rangle \in \Z$ for all $\alpha\in \Pi_\zeta$, then $M(\la)$ and $L(\la)$ satisfy the conditions (a), (b) and (c) above. Furthermore,  $M(\la,\zeta)$ is Kostant positive, and both the answers to Kostant's problem for $L(\la,\zeta)$ and $L(\la)$ are the same. 
 	\end{thm}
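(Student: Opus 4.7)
The plan is to prove the general statement first and then verify conditions (a), (b), (c) for $M(\la)$ and $L(\la)$ under the integrality/anti-dominance hypothesis. For the general part, the strategy is to translate the Kostant's problem into comparisons of dimensions of spaces of $\g$-homomorphisms through the canonical isomorphisms \eqref{eq::Jo68iso}, and then to use the functor $T_{\nu,\zeta}$ (together with Corollary \ref{cor::2}) to identify the relevant Hom spaces for $X$ and $\Gamma_\zeta(X)$.

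More precisely, for any $V\in\mc F$ and any module $M$, $M$ is Kostant positive if and only if $\dim\Hom_\g(V,U(\g)/\Ann_{U(\g)}(M))=\dim\Hom_\g(M\otimes V,M)$. First I would observe that $X$ embeds into the weight completion $\ov{X}$, in which $\Gamma_\zeta(X)$ sits as a $U(\g)$-submodule, so $\Ann_{U(\g)}(X)\subseteq \Ann_{U(\g)}(\Gamma_\zeta(X))$; together with condition~(b) this gives the equality of annihilators, and hence the equality of the left-hand sides for $X$ and $\Gamma_\zeta(X)$. The proof therefore reduces to establishing the dimension identity
$$\dim\Hom_\g(X\otimes V,X)=\dim\Hom_\g(\Gamma_\zeta(X)\otimes V,\Gamma_\zeta(X)),\qquad V\in\mc F.$$
To produce this identity, I would rewrite the left-hand side by adjunction as $\dim\Hom_\g(X,X\otimes V^\ast)$ and apply Corollary~\ref{cor::2} to the pair $(X,Y)$ where $Y$ is the projection of $X\otimes V^\ast$ to the block containing $X$; on the right-hand side I would use that both $T_{\nu,\zeta}$ and $\Gamma_\zeta$ commute with tensoring by finite-dimensional weight modules (the latter by the Andersen--Backelin result cited in the paper), together with condition~(c), to identify $T_{\nu,\zeta}(X\otimes V^\ast)$ with $\Gamma_\zeta(X)\otimes V^\ast$.

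For the ``in particular'' part, I would verify (a)--(c) for $M(\la)$ and $L(\la)$ separately: condition~(a) follows because, under $W_\zeta$-anti-dominance and integrality on $\Pi_\zeta$, the simple top $L(\la)$ is $\alpha$-free for every $\alpha\in\Pi_\zeta$, and the socle of $M(\la)$ is a simple module $L(\mu)$ with $\mu$ anti-dominant in the integral Weyl group of $\la$, hence in particular $W_\zeta$-anti-dominant; condition~(b) for $M(\la)$ is \cite[Lemma 2.2]{MS}, and for $L(\la)$ it follows from the correspondence of annihilators under $\Gamma_\zeta$; condition~(c) is the identification $T_{\nu,\zeta}(M(\la))\cong M(\la,\zeta)\cong\Gamma_\zeta(M(\la))$ (and the analogue for $L(\la)$ using $W_\zeta$-anti-dominance) recalled in subsection~\ref{sect::221}. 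Applying the general statement then yields Kostant positivity of $M(\la,\zeta)$ and the equivalence for $L(\la,\zeta)$ and $L(\la)$.

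The main obstacle I anticipate is the application of Corollary~\ref{cor::2} to the pair $(X,\pr(X\otimes V^\ast))$: whereas condition~(a) guarantees that the top and socle of $X$ are $\alpha$-free, one must additionally control the tops and socles of the block projection of $X\otimes V^\ast$. Restricting to the block containing $X$ confines the highest weights of composition factors to a single integral Weyl orbit that contains $\la$, and one then has to argue that within this orbit the relevant simples $L(\mu)$ are $\alpha$-free for all $\alpha\in\Pi_\zeta$. A cleaner alternative, which I would also pursue if the above runs into difficulty, is to deduce surjectivity directly from the Backelin-functor injection of Lemma~\ref{lem::5} by matching its image with $U(\g)/\Ann_{U(\g)}(X)$ via condition~(b), thereby using the known Kostant positivity of the Verma module $M(\la)$ as the base case.
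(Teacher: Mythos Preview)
Your approach is essentially the paper's: set up the diagram comparing $U(\g)/\Ann$ with $\mc L(-,-)$ for $X$ and for $\Gamma_\zeta(X)$, use condition~(b) to identify the annihilator quotients, and use Corollary~\ref{cor::2} together with condition~(c) to match the Hom-spaces. The obstacle you flag---verifying that the top and socle of $X\otimes V^\ast$ (equivalently, of $X\otimes V$) are $\alpha$-free for every $\alpha\in\Pi_\zeta$---is exactly what the paper dispatches with the phrase ``using adjunction''. The point is that tensoring with a finite-dimensional module preserves $\alpha$-finiteness (on $M\otimes V$ the element $f_\alpha$ acts as a sum of two commuting locally nilpotent operators), so if an $\alpha$-finite simple $L$ were a quotient of $X\otimes V$, the adjoint nonzero map $X\to L\otimes V^\ast$ would land in an $\alpha$-finite module and hence produce an $\alpha$-finite simple quotient of $X$, contradicting~(a); the socle is handled dually. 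Thus Corollary~\ref{cor::2} applies directly to the pair $(X\otimes V,X)$, with no block projection needed: since both $U(\g)^{\ad}$ and $\mc L(X,X)^{\ad}$ have only integral weights when $X$ has weight support in $\nu+\Lambda$, one may restrict to $V$ with weights in $\Lambda$, and then $X\otimes V$ already has weight support in $\nu+\Lambda$.

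Two further remarks. First, your ``cleaner alternative'' via Lemma~\ref{lem::5} and the Kostant positivity of $M(\la)$ is only available in the Verma case; for general $X$ one has no a priori positivity to feed in, which is precisely why the paper combines the $\Gamma_\zeta$-induced injection with the dimension equality coming from Corollary~\ref{cor::2} and~(c). Second, your justification of the reverse annihilator inclusion $\Ann_{U(\g)}(X)\subseteq\Ann_{U(\g)}(\Gamma_\zeta(X))$ via the embedding $X\hookrightarrow\ov X$ is not quite right as stated ($\Gamma_\zeta(X)$ need not be contained in $X$), but the conclusion holds because any $u\in U(\g)$ annihilating every weight space of $X$ annihilates $\ov X$ and hence its submodule $\Gamma_\zeta(X)$.
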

 \begin{proof}
 Using an argument similar to the proof of Lemma \ref{lem::5}, we obtain the following short exact sequences 
	\begin{align}
	&\xymatrixcolsep{2pc} \xymatrix{
		& 0\ar[d] & 0 \ar[d]  \\
		0 \ar[r]     & U(\g)/\Ann_{U(\g)}(X) \ar[r]^{(1)}  \ar@<-2pt>[d]_{(3)}   & \mc L(X,X) \ar[d]^{(4)} \\0 \ar[r]    &  U(\g)/\Ann_{U(\g)}(\Gamma_\zeta(X)) \ar[r]^{(2)} & \mc L(\Gamma_\zeta(X), \Gamma_\zeta(X)) } \label{eq::8}
\end{align}  Here $(1)$ and $(2)$ are natural representation maps.  By assumption (b), the injection $(3)$ is bijective. Using adjunction,  both top and socle of $X\otimes V$ are $\alpha$-free, for any $\alpha\in \Pi_\zeta$ and  $V\in \mc F$. By Corollary \ref{cor::2} and Lemma \ref{lem::5}, the injection in $(4)$ is indeed a  bijection, due to assumption (c). The first conclusion follows. 

Next we prove that conditions (a), (b) and (c) are satisfied whenever $X=M(\la)\text{ or }L(\la)$. Condition (a) for both $M(\la)\text{ and  }L(\la)$ follows by definition. As we have already mentioned above, the fact that $\Ann_{U(\g)} M(\la,\zeta) =\Ann_{U(\g)}M(\la)$ follows from	\cite[Lemma 2.2]{MS}. The fact that  $\Ann_{U(\g)} L(\la) = \Ann_{U(\g)} L(\la,\zeta)$ follows from \cite[Section 5.2]{Ch212}; see also \cite[Theorem B]{Ch212} for the special cases associated to integral weights $\la$.  From \cite[Proposition 5.15]{MS}, we have $T_{\nu,\zeta}(M(\la))\cong M(\la,\zeta) \cong \Gamma_\zeta(M(\la))$ and $T_{\nu,\zeta}(L(\la))\cong L(\la,\zeta) \cong \Gamma_\zeta(L(\la))$. Consequently, we have already reduced the answers of Kostant's problem for $M(\la,\zeta)$ and $L(\la,\zeta)$ to that for $M(\la)$ and $L(\la)$, respectively. The answer for $M(\la)$ is always positive by \cite[Corollary 6.4]{Jo}. This completes the proof.
 \end{proof}

\begin{cor} \label{coro::11}
	Suppose that $\la\in \h^\ast$ is an integral weight and $\zeta\in \ch \mf n_+$ is non-singular. 
	Then $L(\la,\zeta)$ is Kostant positive.  
\end{cor}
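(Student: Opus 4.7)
The plan is to deduce this corollary directly from Theorem A (Theorem \ref{thm::maintmAintext}) together with the structural fact, already noted in Section \ref{sect::143}, that non-singular simple Whittaker modules coincide with their standard versions.

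First I would reduce to a convenient choice of weight representative. Since simple Whittaker modules are parametrized (up to isomorphism) by pairs $(\la,\zeta)$ with $\la$ being $W_\zeta$-anti-dominant, and since $L(\la,\zeta) \cong L(w \cdot \la,\zeta)$ for any $w \in W_\zeta$, I may replace $\la$ by any element of its $W_\zeta$-orbit without changing the question. Non-singularity of $\zeta$ means $\zeta(\g^\alpha) \neq 0$ for every simple root $\alpha$, so $\Pi_\zeta = \Pi$, $\mf l_\zeta = \g$, and $W_\zeta = W$. Consequently, any integral weight $\la$ has a $W$-anti-dominant representative in its dot-orbit, and I replace $\la$ by this representative. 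Then $\la$ is $W_\zeta$-anti-dominant and satisfies $\langle \la,\alpha^\vee \rangle \in \Z$ for all $\alpha \in \Pi_\zeta = \Pi$, so the hypothesis of Theorem A is fulfilled.

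Second, Theorem A(1) applies and tells us that the standard Whittaker module $M(\la,\zeta)$ is Kostant positive. The final step is to observe the collapse mentioned in Section \ref{sect::143}: when $\zeta$ is non-singular, Kostant's original theorem from \cite{Ko78} gives $M(\la,\zeta) = L(\la,\zeta)$, because in this setting the standard Whittaker module is already simple. Hence $L(\la,\zeta)$ is Kostant positive, and the corollary follows.

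I do not anticipate any obstacle: the only thing to be careful about is that the parametrization of simple Whittaker modules is via $W_\zeta$-anti-dominant weights rather than arbitrary ones, so one must first move $\la$ into this fundamental domain before invoking Theorem A. Once that is done, the conclusion is immediate, and one notes that Theorem A(2) is not needed here because non-singularity has already identified $L(\la,\zeta)$ with $M(\la,\zeta)$.
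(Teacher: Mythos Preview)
Your proposal is correct and is precisely the argument the paper intends: the corollary is stated without proof immediately after Theorem~\ref{thm::maintmAintext}, and your derivation—pass to a $W$-anti-dominant representative using $W_\zeta=W$, verify the integrality hypothesis $\langle\la,\alpha^\vee\rangle\in\Z$ for all $\alpha\in\Pi_\zeta=\Pi$, apply Theorem~A(1), and then invoke $M(\la,\zeta)=L(\la,\zeta)$ from Section~\ref{sect::143}—is exactly how it follows. An equally short alternative is to use Theorem~A(2) instead: since the chosen $\la$ is anti-dominant, $L(\la)=M(\la)$ is a Verma module and hence Kostant positive, so $L(\la,\zeta)$ is as well; but this is not genuinely different, just the mirror image of your route.
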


\begin{rem}
It is  worth pointing out that the proof of Theorem A can also be completed by a more transparent point of view from \cite{CCM}. Indeed, the functors $T_{\nu,\zeta}(-)$ and $\Gamma_\zeta(-)$ are isomorphic when restricted to the full subcategory of $\mc O$ of modules having weight supports included in $\nu+\Lambda$; see \cite[Corollary 38]{CCM}. Thus, the images of Verma  modules under these functors are isomorphic. Similarly for simple modules. %Alternatively, since $\Gamma_\zeta$ and $T$ are isomorphic by \cite[Corollary 38]{CCM}, this also provides an alternative argument for the conclusion.
\end{rem}

 %	{\color{red}	\begin{proof}   First, we shall prove Part (1) in Theorem A. 	By assumption, we have that $\langle \la,\alpha^\vee\rangle \in \Z$, for all $\alpha\in \Pi_\zeta$.   	Since $M(\la,\zeta)=M(w\cdot\la,\zeta)$, for any $w\in W_\zeta$, we may set $\la$ to be  $W_\zeta$-anti-dominant. We shall apply Lemma \ref{lem::5} by showing that the map $\Gamma_\zeta(-)$ from \eqref{eq::02} is surjective. To see this, let $E\in \mc F$. Observe that the tops and socles of both $M(\la)$ and $M(\la)\otimes V$ are $\alpha$-free, for any $\alpha\in \Pi_\zeta$.  Recall that $T(M(\la)\otimes V)=M(\la,\zeta)\otimes V$ for any $\la \in \Lambda(\nu)$; see, e.g., \cite[Proposition 5.15]{MS}. It follows that  $$\dim\Hom_{\g}(M(\la)\otimes V,M(\la)) =\dim\Hom_{\g}(M(\la,\zeta)\otimes V,M(\la,\zeta)),$$ by Lemma \ref{lem::55}. Therefore the surjectivity of  $\Gamma_\zeta(-)$  in \eqref{eq::02} follows. Alternatively, since $\Gamma_\zeta$ and $T$ are isomorphic by \cite[Corollary 38]{CCM}, this also provides an alternative argument for the conclusion. 	This completes the proof of Theorem \ref{mainthm1}. 	 It remains to prove Part (2) in Theorem A.   That $\Ann_{U(\g)} L(\la) = \Ann_{U(\g)} L(\la,\zeta)$ from  and the induced map    $\Gamma_\zeta(-)$ in \eqref{eq::03} is surjective, then   $L(\la,\zeta)$ is Kostant positive if and only if $L(\la)$ is.  \end{proof} }

 %	\subsection{Kostant problem for non-singular simple Whittaker modules}
 	%\subsection{Kostant positivity of simple Whittaker module associated to non-singular character $\zeta$}
 	 	\subsection{Kostant problem for Kostant's non-singular simple  Whittaker modules} \label{sect::24}
 In this subsection, we study Kostant's problem for Kostant's (non-singular) Whittaker modules from \cite{Ko78}. We start by giving a complete proof of the Theorem B from Section \ref{sec1}. 	For  $M\in \mc N$, we denote by $\ell(M)$ the length of a composition series of $M$. Moreover, we let $\text{rad}(M)$ and $\text{top}(M)$  denote the radical and the top of $M$, respectively. 
 	%\begin{thm}	Suppose that $\g$ is reductive and $\zeta$ is non-singular. Let $\la\in \h^\ast$ and $\chi:=\chi_\la$. Then the following are equivalent:  		\begin{itemize}  			\item[(1)] $L(\la,\zeta)$ is Kostant positive.  			\item[(2)] Let $\la'\in W_{[\la]}\cdot \la$ be the unique anti-dominant weight. Then  $\theta L(\la')\in \mc O_\la$ and $\ell(\emph{top} \theta L(\la')) = \ell({\emph{top}} \theta L(\la',\zeta))$, for any projective functor $\theta: \g\mod_{\chi}\rightarrow \g\mod_{\chi}$.   		\end{itemize}  	\end{thm}

 	\begin{proof}[{\bf Proof of Theorem B from subsection \ref{sect::143}}]  %Without loss of generality, we may assume that $\la=\la'$. 
 		By \cite[Corollary 3.9]{Ko78}, we have that  $\Ann_{U(\g)}L(\la,\zeta) = \Ann_{U(\g)}L(\la)=\Ann_{U(\g)}M(\la)$, which is  generated by the kernel of the central character $\chi=\chi_\la$; see also \cite[Proposition 2.1]{MS}. It follows by Lemma \ref{lem::5} that  $L(\la,\zeta)$ is Kostant positive if and only if \begin{align*}
 			&\dim \Hom_\g(\theta L(\la), L(\la)) =\dim \Hom_\g(\theta L(\la,\zeta), L(\la,\zeta)),
 		\end{align*} for any projective endo-functor $\theta$ on $\g\mod_\chi$.
 		
 		Assume that $L(\la,\zeta)$ is Kostant positive. We first prove that $\theta L(\la)\in \mc O_\la$. Suppose on the contrary that $\theta L(\la) = U\oplus U'$ such that $U$ is the maximal submodule of $\theta L(\la)$ inside $\mc O_\la$ (i.e., $U$ is the image of $\theta L(\la)$ under the natural projection $\mc O\rightarrow\mc O_\la$) and $U'\not\in \mc O_\la$ is non-zero.  We may observe that 
 		\begin{align*}
 			&\Hom_\g(\theta L(\la), L(\la))\\
 			&\cong \Hom_\g(U, L(\la)) \\
 			&\hookrightarrow \Hom_\g(\Gamma_\zeta U, L(\la,\zeta)) \text{ by Lemma \ref{lem::5}}\\
 			&\hookrightarrow \Hom_\g(\Gamma_\zeta U, L(\la,\zeta))\oplus \Hom_\g(\Gamma_\zeta U', L(\la,\zeta))\\
 			&\cong  \Hom_\g(\theta L(\la,\zeta), L(\la,\zeta)), \text{~since $\Gamma_\zeta\theta \cong \theta\Gamma_\zeta$}.	\end{align*}  
 		Since $\theta L(\la)$ admits a Verma flag, so does $U'$.  Thus, we have    $\Gamma_\zeta U'\neq 0$. Since $L(\la,\zeta)$ is the unique simple Whittaker module admitting the central character $\chi$, we may conclude that $\Hom_\g(\Gamma_\zeta  U', L(\la,\zeta))\neq 0,$ contradicting to the assumption. Thus,  we have already shown that $\theta L(\la)\in \mc O_\la$. Furthermore, this   implies that $\text{top}(\theta L(\la))$ is a direct sum of  $L(\la)$. Applying the Backelin functor $\Gamma_\zeta(-)$ to the short exact sequence $0\rightarrow \rad\theta L(\la) \rightarrow \theta L(\la)\rightarrow \text{top}\theta L(\la)\rightarrow 0$, we get an inclusion   $$L(\la,\zeta)^{\oplus \ell(\text{top}\theta L(\la))}\hookrightarrow \text{top}(\theta L(\la,\zeta)).$$ %Since every simple direct summand of $\text{top}(\theta L(\la))$ has anti-dominant highest weight by adjunction, 
 	 Putting these things together, we conclude the following 
 		\begin{align}
 			&\dim \Hom_\g(\theta L(\la), L(\la)) \label{eq::310}\\ &=\ell(\text{top}(\theta L(\la))) \label{eq::311}\\
 			&\leq \ell(\text{top}(\theta L(\la,\zeta))) \label{eq::312}\\
 			&=\dim \Hom_\g(\theta L(\la,\zeta), L(\la,\zeta)). \label{eq::313}
 		\end{align} Consequently, we have $\ell(\text{top}(\theta L(\la))) =
 		\ell(\text{top}(\theta L(\la,\zeta))).$
 		
 		Conversely,  assume that the conditions in Part (2) hold. Note that, with the assumption that $\theta L(\la) \in \mc O_\la$, the equalities and inequality in  \eqref{eq::310}-\eqref{eq::313} remains valid by the same argument.  It follows that the dimensions
 		$\dim \Hom_\g(\theta L(\la), L(\la))$ and $\dim \Hom_\g(\theta L(\la,\zeta), L(\la,\zeta))$ are the same. Consequently, $L(\la,\zeta)$ is Kostant positive.  This completes the proof.
 	\end{proof}

 	In the following corollary,  we give a complete answer to Kostant's problem for
 	simple Whittaker modules over $\mf{sl}(2)$. %Recall that we denote by $\rho\in \h^\ast$ the half sum of the positive root. 
 	\begin{cor} \label{cor::13}
 		Suppose that $\g=\mf{sl}(2)$.  Let $\alpha$ be the unique simple root of $\g$.  Then the following are equivalent for any $\zeta\in \ch \mf n_+$ and $W_\zeta$-anti-dominant weight $\la\in \h^\ast$:
 		\begin{itemize}
 			\item[(1)] $L(\la,\zeta)$ is Kostant positive.
 			\item[(2)] Either $\zeta=0$, or $\la \neq  -\frac{k+2}{4}\alpha$ for some  positive odd integer $k$.
 		\end{itemize}  
 	\end{cor}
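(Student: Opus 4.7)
The plan is to combine Theorem B with a case-by-case analysis of how projective endofunctors act on simples in $\mc O$ for $\g=\mf{sl}(2)$. If $\zeta=0$, then $L(\la,0)=L(\la)$ is a simple highest weight module: for anti-dominant $\la$ this equals the Verma $M(\la)$, Kostant positive by \cite[Corollary 6.4]{Jo}; for dominant integral $\la$ the module $L(\la)$ is finite-dimensional, so $\mc L(L(\la),L(\la))=\End_\C(L(\la))$ and the representation map is surjective by Burnside's density theorem. So assume $\zeta\neq 0$; since $\g$ has rank one, $\zeta$ is automatically non-singular. When $\la$ is integral, Corollary \ref{coro::11} yields Kostant positivity, and the remaining task is to treat non-integral $\la$ via Theorem B with $\chi=\chi_\la$.

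Write $\omega=\alpha/2$ and $\la=n\omega$ with $n\notin\Z$, so $s\cdot\la-\la=-2(n+1)\omega$; this shift lies in $\Z\omega$ precisely when $n\in\tfrac12\Z$. In the non-half-integer case $n\notin\tfrac12\Z$, the weight supports of $M(\la)$ and $M(s\cdot\la)$ are disjoint, forcing all extensions between the simple Vermas $L(\la)=M(\la)$ and $L(s\cdot\la)=M(s\cdot\la)$ to vanish; hence $\mc O_{\chi_\la}$ splits into two semisimple singleton blocks with $\mc O_\la=\{L(\la)\}$. For any $V\in\mc F$, the Verma flag of $V\otimes M(\la)$ has factors $M(\la+\mu)$ indexed by the weights $\mu$ of $V$, and the condition $\chi_{\la+\mu}=\chi_\la$ forces $\mu\in\{0,\,-2(n+1)\omega\}$; but the latter does not lie in $\Z\omega$ and hence is not a weight of any finite-dimensional $V$, so only $\mu=0$ contributes. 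This gives $\theta L(\la):=\pr_{\chi_\la}(V\otimes M(\la))\cong L(\la)^{\oplus\dim V_0}\in\mc O_\la$. Applying the exact Backelin functor $\Gamma_\zeta$, which commutes with $V\otimes-$ and preserves central characters, to the same module yields $\theta L(\la,\zeta)\cong L(\la,\zeta)^{\oplus\dim V_0}$ with equal top length, and Theorem B delivers Kostant positivity.

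For the remaining half-integer non-integer case, up to the Weyl equivalence $L(\la,\zeta)\cong L(s\cdot\la,\zeta)$ we may take $\la=-\tfrac{k+2}{4}\alpha$ for some positive odd $k$, so $\la+\rho=-\tfrac{k}{4}\alpha$ is a nonzero half-integral weight. Both $M(\la)$ and $M(s\cdot\la)$ are simultaneously anti-dominant and dominant, hence simple and projective, and again $\mc O_{\chi_\la}$ splits into two semisimple singleton blocks $\mc O_\la=\{L(\la)\}$ and $\mc O_{s\cdot\la}=\{L(s\cdot\la)\}$. Choose $V$ to be the $(k+1)$-dimensional simple $\g$-module, whose weights are the odd multiples $\pm k\omega,\pm(k-2)\omega,\ldots,\pm\omega$. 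Since $k$ is odd, $V$ has no zero weight, and among the weights $\mu$ of $V$ only $\mu=-2(n+1)\omega=k\omega$ satisfies $\chi_{\la+\mu}=\chi_\la$; moreover no pairing $\mu+\mu'=k\omega$ with distinct weights $\mu,\mu'$ of $V$ is possible, because odd plus odd is even. Consequently $V\otimes M(\la)$ splits as a direct sum along central characters, and $\theta L(\la)=\pr_{\chi_\la}(V\otimes M(\la))=M(s\cdot\la)=L(s\cdot\la)$, which lies in $\mc O_{s\cdot\la}\neq\mc O_\la$. The first condition of Theorem B thus fails, so $L(\la,\zeta)$ is Kostant negative. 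The main technical point is verifying the singleton-block decomposition of $\mc O_{\chi_\la}$ for non-integral $\la$; once that is in hand, the rest reduces to elementary bookkeeping with Verma flags, central characters, and the Backelin functor.
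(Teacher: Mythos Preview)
Your proof is correct and follows essentially the same strategy as the paper: reduce via Theorem~B to determining when some projective functor pushes $L(\la)$ out of $\mc O_\la$, after first disposing of the cases $\zeta=0$ (classical) and $\la$ integral (Corollary~\ref{coro::11}). The paper's treatment of the non-integral case is slightly more streamlined: it observes uniformly that $L(n)\otimes L(\la)$ is semisimple (all composition factors are anti-dominant simple Verma modules), deduces that $\theta L(\la)$ and $\theta L(\la,\zeta)$ are semisimple of the same length via $\Gamma_\zeta\theta\cong\theta\Gamma_\zeta$, and then checks directly when a translate $L(\la+m)$ lands in $\mc O_\chi\setminus\mc O_\la$, obtaining the half-integer condition. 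Your subdivision into non-half-integer and half-integer cases accomplishes the same thing with more explicit bookkeeping. Two minor remarks: your ``pairing $\mu+\mu'=k\omega$'' sentence is unnecessary---the Verma-flag and central-character decomposition already force a single factor $M(s\cdot\la)$ in the projection, so you can delete it; and strictly speaking you verify condition~(2) of Theorem~B only for $\theta=\pr_\chi(V\otimes-)$, but the extension to direct summands is immediate from $\Gamma_\zeta\theta'\cong\theta'\Gamma_\zeta$, which you should mention explicitly.
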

 	\begin{proof}
 		Simple highest weight $\mf{sl}(2)$-modules are known to be Kostant positive; see, e.g., \cite[6.9 (10)]{Ja} and \cite[Corollary 6.4]{Jo}. Therefore, we may  assume that $\zeta\neq 0$. If $\la$ is integral, then the Kostant positivity of  $L(\la,\zeta)$ follows from Corollary   \ref{coro::11}. %Suppose that $\zeta$ is non-singular. Let $\la\in \h^\ast$.  If $\la$ is integral, then $L(\la,\zeta)$ is Kostant positive by Lemma \ref{lem::5}. \frac{n}{2}\alpha
 		Now, suppose that $\la$ is non-integral. In what follows, we identify elements in $\mf h^\ast$ with elements in $\C$ via the bijection $\mu\mapsto \langle \mu, \alpha^\vee\rangle$, for $\mu\in \h^\ast$.  For any non-negative integer $n$, the  composition factors of $L(n)\otimes L (\la)$ are
 		\[ L(\la+n),~L(\la+n-2), \ldots,~L(\la-n).\] 
 		Thus,  $\theta L(\la)$ is a direct sum of simple highest weight modules of anti-dominant highest weights, for any projective functor $\theta$. Applying the Backelin functor $\Gamma_\zeta,$ it follows that $\theta L(\la,\zeta)$ is also  semisimple. We thus conclude that  $\ell(\theta L(\la,\zeta))=\ell(\theta (\Gamma_\zeta(L(\la))))=\ell( \Gamma_\zeta(\theta L(\la))) = \ell(\theta L(\la))$. By Theorem B, we reduce the problem of determining  Kostant negative $L(\la,\zeta)$ to the problem of determining anti-dominant and non-integral weights $\la$ for which the translated simple module $\theta L(\la)$ does not lie in $\mc O_\la$.

 	  Let $\chi= \chi_\la$ and $m\in \Z\backslash \{0\}$. Set $s=s_\alpha\in W$ to be the simple reflection associated with $\alpha$. Note that $L(\la+m)\in \mc O_\chi$ if and only if $\la +m=s\cdot\la$, which is equivalent to the condition that $\la = -\frac{m+2}{2}\in \frac{1}{2}+\Z$.  The conclusion follows by Theorem B.
 	\end{proof}
 	
 	\begin{rem} \label{rem::10}
A special case of  Corollary \ref{cor::13} recovers an   example of Joseph from \cite[Section 2.3]{Jo2}, where the Kostant negativity of the simple Whittaker $\mf{sl}(2)$-module $L(-\frac{3}{4}\alpha, \zeta)$ with $\zeta\neq  0$ has been proved.  Strongly inspired by this example, we give a sufficient condition for a negative answer in the following corollary from the point of view of Theorem B.	\end{rem}

 	\begin{cor} \label{cor::11}
 		 Let $\zeta\in \ch\mf n_+$ be  non-singular and $\la$ be anti-dominant. Suppose that there is $w\in W$ satisfying the following conditions:
 		 \begin{itemize}
 		 	\item[(1)] $\dim L(w\cdot \la-\la)<\infty$.
 		 	\item[(2)] $w\cdot\la -\la$ does not lie in the $\Z$-span of $\Phi$ in $\h^\ast$.
 		 \end{itemize} Then $L(\la,\zeta)$ is Kostant negative.
 	\end{cor}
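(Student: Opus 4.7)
The plan is to apply Theorem B: since $\zeta$ is non-singular and $\la$ is anti-dominant, it suffices to exhibit a single projective endo-functor $\theta$ on $\g\mod_{\chi_\la}$ such that $\theta L(\la) \notin \mc O_\la$. Given such a $\theta$, Theorem B immediately forces $L(\la,\zeta)$ to be Kostant negative, with no need to control tops.

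I would set $V := L(w\cdot\la - \la)$, which is finite-dimensional by condition $(1)$, and take $\theta := \pr_\chi(V \otimes -)$ with $\chi := \chi_\la$. Since $\la$ is anti-dominant we have $L(\la) = M(\la)$, and the standard tensor-Verma flag equips $V \otimes M(\la)$ with a filtration whose subquotients are the Verma modules $M(\la+\nu)$, one for each weight $\nu$ of $V$ counted with multiplicity. In particular, $M(w\cdot\la) = M(\la+(w\cdot\la-\la))$ appears as the top subquotient of the flag, corresponding to the highest weight of $V$. Because $\chi_{w\cdot\la} = \chi_\la = \chi$, this Verma subquotient survives the projection $\pr_\chi$, so $L(w\cdot\la)$ is a composition factor of $\theta L(\la)$.

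To conclude, I would invoke the block decomposition of $\mc O$ under linkage: two simples $L(\mu)$ and $L(\mu')$ lie in the same indecomposable summand of $\mc O$ only if $\mu - \mu' \in \Z\Phi$. Condition $(2)$ says precisely that $w\cdot\la - \la \notin \Z\Phi$, hence $L(w\cdot\la) \notin \mc O_\la$. Since $\mc O_\la$ is closed under subquotients, this forces $\theta L(\la) \notin \mc O_\la$, and Theorem B completes the proof. The only mildly delicate step is verifying that $L(w\cdot\la)$ is not killed by $\pr_\chi$, which reduces to the identity $\chi_{w\cdot\la} = \chi_\la$ together with the appearance of $M(w\cdot\la)$ in the Verma flag of $V \otimes M(\la)$; the remaining ingredients are standard facts about category $\mc O$.
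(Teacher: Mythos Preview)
Your proposal is correct and follows essentially the same approach as the paper: take $\theta=\pr_{\chi_\la}(L(w\cdot\la-\la)\otimes -)$, show that $L(w\cdot\la)$ occurs in $\theta L(\la)$ and hence $\theta L(\la)\notin\mc O_\la$, then invoke Theorem~B. The paper simply asserts that $L(w\cdot\la)$ is a submodule of $L(w\cdot\la-\la)\otimes L(\la)$, whereas you unpack this via $L(\la)=M(\la)$ and the Verma flag; the arguments are otherwise identical.
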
 		
 	\begin{proof} We are going to show that $\theta L(\la)\notin \mc O_\la$. Indeed,  note that $L(w\cdot \la)$ is a submodule of $L(w\cdot \la-\la)\otimes L(\la)$.  Therefore, we have $\pr_{\chi_\la}(L(w\cdot \la-\la)\otimes L(\la))\notin \mc O_\la$. The conclusion follows by Theorem B. 
 	\end{proof}
In the following discussion, we prove that there always exist negative examples of non-singular simple Whittaker modules for any type A Lie algebra. 
   
 \begin{ex}  \label{ex::nonKo} 
 	Consider the Lie algebra $\g=\mf{gl}(n)$.  	We   illustrate the utility of Corollary \ref{cor::11}  by providing a family of non-singular simple Whittaker $\g$-modules that are Kostant negative.  
 	
 	 We realize $\gl(n)$ as the space of $n\times n$ complex matrices, and the triangular decomposition $\g=\mf n_-\oplus \mf h\oplus \mf n_+$ is just the usual decomposition into the lower triangular, diagonal
 	 and upper triangular matrices.   
 	 It is standard to identify $\h^\ast$ with a  $n$-dimensional complex space  $E:=\text{span}_\C\{\vare_1,~\vare_2,\ldots, \vare_n\}$ such that $\Phi^+$ and $\Pi$ correspond to $\{\vare_i-\vare_j|~1\leq i< j\leq n\}$ and $\{\vare_i-\vare_{i+1}|~1\leq i< n\}$, respectively. The bilinear form $( \cdot ,\cdot )$  is determined by $( \vare_i, \vare_j) =\delta_{ij}$, for any $1\leq i,j\leq n$. In what follows, we identify elements in $W$ with elements in the  symmetry group $S_n$ of order $n!$ with the  natural action on $E$ via $\sigma \vare_{i} = \vare_{\sigma(i)}$, for $\sigma\in S_n$ and $1\leq i\leq n$. % In what follows, we identify the elements in $W$ with the elements in $S_n$. 
 	 
 	 Let $w\in W$ be the element  determined by 
 	 \[w\vare_{i} =\vare_{i+1},~\text{for $1\leq i<n$ and }w\vare_n =\vare_1.\]	 Without loss of generality, we may choose $\rho$ to be $\sum_{k=1}^n (n-k)\vare_k$. Consider the vector $\la\in E$ determined by 
 	 \begin{align*}
 	 	&\la+\rho = \sum_{k=1}^n (\frac{k-1}{n})\vare_k.
 	 	\end{align*} 	%With slight abuse	of notation, we denote by $\la$ its restriction to $\mf h$. 
  	We may note that $\la$ is anti-dominant, since $\langle \la+\rho, \alpha^\vee \rangle = -\frac{1}{n}$, for any simple root $\alpha$. 
  	
  Fix a non-singular character $\zeta \in \ch \mf n_+$. In the case that $n=2$, it is proved that $L(\la,\zeta)$ is Kostant negative; see  \cite[Section 2.3]{Jo2}.  We claim that $L(\la,\zeta)$ is always Kostant negative, in general. We are going to show that the conditions (1) and (2) in Corollary \ref{cor::11} for our choices of $w$ and $\la$ are fulfilled. To see this, we calculate
  	\begin{align*}
    &w\cdot \la = w(\la+\rho) -\rho= (\frac{n-1}{n}-(n-1))\vare_1 -\sum_{k=2}^n (\frac{k-2}{n} - (n-k))\vare_k. 
  	\end{align*}
 	Thus, we obtain 
 		\begin{align*}
 		&w\cdot \la -\la  \\ &=(\frac{n-1}{n}-(n-1))\vare_1 +\sum_{k=2}^n (\frac{k-2}{n} - (n-k))\vare_k - \sum_{k=1}^n (\frac{k-1}{n})\vare_k + \sum_{k=1}^n (n-k)\vare_k\\
 		&=\frac{n-1}{n}\vare_1 -\frac{1}{n}(\vare_2 +\vare_3+\cdots + \vare_n).
 	\end{align*} It turns out that $L(w\cdot\la-\la)$ is finite-dimensional and $w\cdot\la-\la$ does not lie in the root lattice in $\mf h^\ast$. Consequently, the answer to Kostant's problem for  $L(\la,\zeta)$ is negative. 
 	\end{ex}

\section{Kostant's problem for Whittaker modules over Lie superalgebras} \label{sect::Kosuper}

 In the remainder of the paper, we shall assume that $\widetilde \g$ is one of the type I Lie superalgebras $\widetilde{\g}$ from Kac’s list \cite{Ka1}:
	\begin{align}
		&(\text{Type } {\bf A}):~\gl(m|n),~\mf{sl}(m|n),~\mf{psl}(n|n), \label{eq::claA}  \\
		&(\text{Type } {\bf C}):~\mf{osp}(2|2n),\label{eq::claC} \\
		&(\text{Type } {\bf P}):~\mf{p}(n),~[\mf{p}(n),\mf{p}(n)]. \label{eq::claP}
	\end{align}  %Such Lie superalgebras are called {\em type I} Lie superalgebras. 
The Lie superalgebra $\widetilde{\g}$ admits a  type-I gradation  $\widetilde \g=\widetilde \g_{-1}\oplus \widetilde \g_0\oplus\widetilde \g_1.$  
	In this section, we study Kostant's problem for Whittaker modules over these Lie superalgebras and then give a complete proof of Theorem C.
	
	%For simplicity, we only consider the Lie superalgebras $\gl(m|n), \mf{osp}(2|2n)$ and $\pn$ in the paper. 

	\subsection{Generalities on Lie superalgebras} \label{sect::311}
	\subsubsection{Setup}  Recall that we put $\g:=\widetilde{\g}_\oa$ and fixed a triangular decomposition of $\widetilde{\g}$ in \eqref{defn::triansup} in subsection \ref{sect::144} with Cartan subalgebra $\h$ and nilradicals $\widetilde{\mf n}_{\pm}$, which is compatible with the triangular decomposition $\g =\mf n_-\oplus \mf h\oplus \mf n_+$ of $\g$ in \eqref{defn::trian}, that is, the even parts of $\widetilde{\mf n}_\pm$ are $\mf n_\pm$ and the odd parts of  $\widetilde{\mf n}_\pm$ are $\widetilde \g_{\pm 1}$, respectively. %We define the standard Borel subalgebra $\mf b:=\mf h\oplus \mf n$.  
Denote by $\widetilde{\mc O}$ the BGG category with respect to the Borel subalgebra $\mf h+\widetilde{\mf n}_+$.  Let $\widetilde{\mc F}$ denote the full subcategory of $\widetilde{\mc O}$ of all finite-dimensional weight $\widetilde \g$-modules.

 	Denote by $\widetilde{\g}\mod$ the category of finitely-generated $\widetilde{\g}$-modules. We have the usual restriction functor $\Res(-): \widetilde{\g}\mod\rightarrow \g\mod$. The left adjoint functor of  $\Res(-)$ is the  induction functor 
$\Ind(-):=U(\widetilde{\g})\otimes_{U(\g)}-: \mf g\mod \rightarrow\widetilde{\g}\mod$. For given two Whittaker modules $M,N$, recall that we define $\mc L(M,N)$ as the $\g$-bimodule $\mc L(\Res M,\Res N)$, which is also a $\widetilde \g$-bimodule.

	\subsubsection{Whittaker category $\widetilde{\mc N}$} \label{sect::31}  A $\widetilde{\g}$-module is called a  {\em Whittaker module} provided that it restricts to a Whittaker module over $\g$ with respect to the triangular decomposition in \eqref{defn::trian}.  %We review some basic properties of the category of Whittaker modules over $\widetilde{\g}$. 
	Denote by $\widetilde{\mc N}$ the category of Whittaker modules over $\widetilde{\g}$. For each $\zeta\in \ch \mf n_+$, we let $\widetilde{\mc N}(\zeta)$ denote the full subcategory of $\widetilde{\mc N}$ of objects that restricts to objects in $\mc N(\zeta)$. Similar to the category $\mc N$, the action of $\mf n_+$ decomposes the category $\widetilde{\mc N}$ into a direct sum of full subcategories $\widetilde{\mc N}(\zeta)$ over all $\zeta\in \ch \mf n_+$. Again, any object in $\widetilde{\mc  N}$ has a finite composition series  and  the category $\widetilde{\mc N}$ is closed under tensoring with objects from $\widetilde{\mc F}$. We refer to \cite{Ch21} for all details.
	
	For any $M\in \widetilde{\mc N}$ and $V\in \mc F$, we note that 
	\begin{align}
	&\Hom_{\g}(V, \mc L(\Res M, \Res M)^{\text{ad}})\cong 
	\Hom_{\widetilde{\g}}(M \otimes \Ind V, M), 
	\end{align} by \eqref{eq::Jo68iso} and the adjunction. Putting these properties together,  we conclude that $M$ is Kostant positive if and only if 
	\begin{align*}
	&\dim\Hom_{\widetilde{\g}}(M\otimes \Ind V, M) = \dim \Hom_{\widetilde{\g}}(\Ind V, (U(\widetilde \g)/\Ann_{U(\widetilde \g)}M)^\ad),
	\end{align*} for any $V\in \mc F$. We remark that the $\widetilde{\g}$-modules  isomorphic to direct summands of $\widetilde{\g}$-modules of the form $\Ind V$ ($V\in \mc F$) are exactly the  projective-injective modules in $\widetilde{\mc F}$. 
 	
	% {\color{red}  For any $M\in \mc N$, it follows by \eqref{eq::Jo68iso} that $M$ is Kostant positive if and only if $$\dim\Hom_\g(M\otimes V, V) = \dim \Hom_\g(V, (U(\g)/\Ann_{U(\g)}M)^\ad),$$ for any $V\in \mc F$.}

	\subsubsection{Standard and simple Whittaker modules} \label{sect::313}
	For given $\la\in \h^\ast$ and $\zeta\in \ch \mf n_+$, the corresponding standard Whittaker module $\widetilde{M}(\la,\zeta)$ is defined as follows 
	\begin{align*}
		&\widetilde{M}(\la,\zeta) = U(\widetilde{\g})\otimes_{U(\widetilde{\mf p}_\zeta)}Y_\zeta(\la,\zeta),
	\end{align*}  where $\widetilde{\mf p}_\zeta : = \mf l_\zeta+\widetilde{\mf n}_+$ is a parabolic subalgebra of $\widetilde{\g}$. Similar to the Lie algebra case, these standard Whittaker modules can be parameterized by $\zeta\in \ch \mf n_+$ and $W_\zeta$-anti-dominant weights $\la$, namely,  we still have, for any $\la,\mu\in \h^\ast$ and $\zeta,\eta\in \ch \mf n_+$, that
\begin{align}
&\widetilde{M}(\la,\zeta)\cong \widetilde{M}(\mu,\eta) \Leftrightarrow \widetilde{L}(\la,\zeta)\cong \widetilde{L}(\mu,\eta) \Leftrightarrow \zeta=\eta~\text{and}~\la\in W_\zeta\cdot \mu. \label{eq::19}
\end{align} Recall that the (simple) top of   $\widetilde{M}(\la,\zeta)$ is denoted by $\widetilde{L}(\la,\zeta)$. The simple modules  $\widetilde{L}(\la,\zeta)$, for $(\la,\zeta)\in \mf h^\ast\times \ch \mf n_+$ constitute all simple Whittaker modules over $\widetilde{\g}$.

	  For the theory of Whittaker modules over $\widetilde{\g}$, these standard Whittaker $\widetilde{\g}$-modules  play the role similar to that which the standard Whittaker $\g$-modules play for the study of Whittaker modules over $\g$, and they share many favorable properties. We refer to \cite{Ch21} for further details. 

\subsubsection{Super Backelin's functor $\widetilde \Gamma_\zeta$} \label{Sect::314}
We recall the super version of Backelin’s original functor $\widetilde{\Gamma}_\zeta: \widetilde{\mc O}\rightarrow \widetilde{\mc N}(\zeta)$ from \cite[Section 5.2]{Ch21}, which naturally extends Backelin's functor $\Gamma_\zeta$ from \cite{B}. By definition, for any $M\in \widetilde{\mc O}$ , the image $\widetilde{\Gamma}_\zeta(M)$ is defined as the Whittaker $\g$-module $\Gamma_\zeta(\Res M)$, which is also a Whittaker  $\widetilde{\g}$-module.  For any $\la\in \h^\ast$,  it follows by \cite[Theorem 20]{Ch21} that  
\begin{align*}
	&\widetilde{\Gamma}_\zeta(\widetilde{M}(\la))=\widetilde{M}(\la,\zeta),~\text{ and }\widetilde{\Gamma}_\zeta(\widetilde{L}(\la)) \cong \left\{ \begin{array}{ll} \widetilde{L}(\la, \zeta),\quad \text{if $\la$ is  $W_\zeta$-anti-dominant;} \\ 
		0, \quad\text{otherwise}.\end{array} \right.   
\end{align*} The category $\widetilde{\mc N}(\zeta)$ is closed under tensoring with modules in $\widetilde{\mc F}$. Furthermore,  for any $M\in \widetilde{\mc O}$ and  $E\in \widetilde{\mc F}$ we have 
\begin{align*}
&\widetilde{\Gamma}_\zeta(M\otimes E)\cong \widetilde{\Gamma}_\zeta(M)\otimes E.
\end{align*}

\subsubsection{The functor $\widetilde{T}_{\nu,\zeta}$} Let $\zeta\in \ch \mf n_+$ and $\nu\in \h^\ast$ be dominant such that $W_\zeta=W_\nu$, i.e., the stabilizer of $\nu$ is $W_\zeta$ under the dot-action of $W$. We recall the functor $T_{\nu,\zeta}$   introduced in Section \ref{sect::221}. As we have already mentioned, this functor affords a natural generalization to Whittaker modules over $\widetilde{\g}$, which we shall denote by $\widetilde{T}_{\nu,\zeta}$:
 \begin{align}
	&\widetilde T_{\nu,\zeta}(-):\mc L(M(\nu),-)\otimes_{U(\g)} M(\nu,\zeta): \widetilde{\mc O} \rightarrow \widetilde{\mc N}(\zeta).
\end{align} 
 By \cite[Proposition 33]{Ch21}, it follows that
 \begin{align*}
 &\widetilde{T}_{\nu,\zeta}(\widetilde{M}(\mu))\cong \widetilde{M}(\mu,\zeta), \text{ for any }\mu\in \nu+\Lambda; \\
 &\widetilde{T}_{\nu,\zeta}(\widetilde{L}(\mu))\cong \widetilde{L}(\mu,\zeta), \text{ for any }\mu\in \Lambda(\nu).
 \end{align*}

  The following is the main result in this subsection. 
	\begin{thm}  \label{mainth3} Let $\widetilde \g$ be a type I Lie superalgebra with $\zeta\in \ch \mf n_+$. 
		Suppose that $\la\in \mf h^\ast$ is $W_\zeta$-anti-dominant and   $\langle\la,\alpha^\vee\rangle \in \Z$, for any $\alpha\in \Pi_\zeta$. Then  we have 
		\begin{itemize}
			\item[(1)]  $\widetilde{M}(\la,\zeta)$ is Kostant positive if and only if $\widetilde{M}(\la)$ is.
			\item[(2)] $\widetilde{L}(\la,\zeta)$ is Kostant positive if and only if $\widetilde{L}(\la)$ is.
		\end{itemize} 
	\end{thm}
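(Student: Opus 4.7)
The plan is to mirror the proof of Theorem~\ref{thm::maintmAintext} in the super setting, replacing $\Gamma_\zeta$ and $T_{\nu,\zeta}$ by their super analogues $\widetilde{\Gamma}_\zeta$ and $\widetilde{T}_{\nu,\zeta}$ from subsection~\ref{Sect::314}. Fixing a dominant $\nu \in \h^\ast$ with $W_\nu = W_\zeta$ and $\la \in \nu + \Lambda$ (possible by the integrality hypothesis on $\Pi_\zeta$), the goal is to establish the super analogue of diagram~\eqref{eq::8} with both vertical arrows bijective, for $X = \widetilde{M}(\la)$ and for $X = \widetilde{L}(\la)$. The commutativity of this square then forces the top horizontal representation map to be surjective if and only if the bottom one is, which is exactly the iff in parts~(1) and~(2).

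The first step is to prove a super version of Lemma~\ref{lem::55} and Corollary~\ref{cor::2}: for $X, Y \in \widetilde{\mc O}$ with weight supports in $\nu + \Lambda$ whose tops and socles are $\alpha$-free for every $\alpha \in \Pi_\zeta$, the functor $\widetilde{T}_{\nu,\zeta}$ induces an isomorphism $\Hom_{\widetilde{\g}}(X,Y) \cong \Hom_{\widetilde{\g}}(\widetilde{T}_{\nu,\zeta}(X), \widetilde{T}_{\nu,\zeta}(Y))$. This is a direct super adaptation of the diagram chase in the proof of Lemma~\ref{lem::55}: one uses $\nu$-admissible projective presentations in $\widetilde{\mc O}$, the exactness of $\widetilde{T}_{\nu,\zeta}$, and the fact that $\widetilde{T}_{\nu,\zeta}$ kills simple modules with highest weights outside $\Lambda(\nu)$. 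In parallel I would prove a super analogue of Lemma~\ref{lem::5}: since $\widetilde{\Gamma}_\zeta$ is exact, commutes with tensoring by any $E \in \widetilde{\mc F}$, and does not annihilate modules whose simple quotients are $W_\zeta$-anti-dominant, any non-zero $f\colon \widetilde{M}(\la) \otimes E \to \widetilde{M}(\la)$ is sent to a non-zero $\widetilde{\Gamma}_\zeta(f)\colon \widetilde{M}(\la,\zeta) \otimes E \to \widetilde{M}(\la,\zeta)$, and analogously with $\widetilde{L}$ in place of $\widetilde{M}$.

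Next I would verify that $X = \widetilde{M}(\la)$ and $X = \widetilde{L}(\la)$ satisfy three conditions analogous to (a), (b), (c) of Theorem~\ref{thm::maintmAintext}: (a) the tops and socles of $X$ are $\alpha$-free for $\alpha \in \Pi_\zeta$, from the $W_\zeta$-anti-dominance and integrality of $\la$ together with the freeness of $\widetilde{M}(\la)$ over $U(\widetilde{\mf n}_-)$; (b) the annihilator equalities $\Ann_{U(\widetilde{\g})}(X) = \Ann_{U(\widetilde{\g})}(\widetilde{\Gamma}_\zeta X)$, obtained in the standard case by lifting \cite[Lemma 2.2]{MS} through the type I Kac decomposition $\widetilde{M}(\la) \cong U(\widetilde{\g}_{-1}) \otimes M(\la)$ (and likewise for $\widetilde{M}(\la,\zeta)$), and in the simple case from \cite[Section 5.2]{Ch212}; and (c) $\widetilde{T}_{\nu,\zeta}(X) \cong \widetilde{\Gamma}_\zeta(X)$, which is explicit on Verma and simple modules from the formulas recalled in subsection~\ref{Sect::314} and immediately above Theorem~\ref{mainth3}. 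With these in place, arrow (3) of the super diagram~\eqref{eq::8} is bijective by (b); taking $\Ind V$-isotypic components of $\mc L(X,X)^{\ad}$ as in subsection~\ref{sect::31}, arrow (4) factors as the isomorphism from the super Corollary~\ref{cor::2} (applied to $X \otimes \Ind V$ and $X$, using (a) and the fact that tensoring with $\Ind V$ preserves $\alpha$-freeness of tops and socles) composed with the isomorphism from (c), so (4) is bijective as well.

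The main obstacle will be the verification of (b) for the simple modules $\widetilde{L}(\la)$: because $\widetilde{L}(\la,\zeta)$ is not in general Kac-induced from $L(\la,\zeta)$, the annihilator equality does not follow mechanically from the Lie algebra case and requires extending the arguments of \cite[Section 5.2]{Ch212} to cover all $\la$ subject to the present integrality condition on $\Pi_\zeta$. Once this annihilator statement is settled, the remaining steps are routine super adaptations of the corresponding Lie algebra arguments.
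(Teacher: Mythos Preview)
Your proposal is correct and follows essentially the same strategy as the paper's own proof: both mirror the Lie algebra argument of Theorem~\ref{thm::maintmAintext} via the super functors $\widetilde{\Gamma}_\zeta$ and $\widetilde{T}_{\nu,\zeta}$, establish the super analogues of Lemma~\ref{lem::5} and Corollary~\ref{cor::2}, and invoke the annihilator equalities for the standard and simple cases. The only minor discrepancies are that the paper cites \cite[Proposition~26]{Ch21} directly for $\Ann_{U(\widetilde{\g})}\widetilde{M}(\la,\zeta) = \Ann_{U(\widetilde{\g})}\widetilde{M}(\la)$ rather than reducing to \cite[Lemma~2.2]{MS} via the Kac decomposition, and it treats \cite[Section~5.2]{Ch212} as already covering the simple case in the required generality, so your flagged ``main obstacle'' is not actually a gap here.
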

%\begin{comment}
	\begin{proof}
		The proof follows the same strategy as in the Lie algebra
		case given in Section \ref{Sect1}. Our goal is to establish some key steps, while omitting the parts that are analogous to the Lie algebra case and for which we refer to the proof of Theorem \ref{thm::maintmAintext} for details.
		
		Since $\widetilde{\g}$ is of  type I and $\la$ is $W_\zeta$-anti-dominant, it follows that tops of both $\widetilde{M}(\la)\otimes E$ and $\widetilde{L}(\la)\otimes E$ are $\alpha$-free, for any $E\in \widetilde{F}.$  %Let $X$ be $M(\la,\zeta)$ or $L(\la,\zeta)$, and let $\widetilde{X}$ be $\widetilde{M}(\la,\zeta)$ and be $\widetilde{L}(\la,\zeta)$ otherwise. 
		It follows by an argument similar to that used in the proof of Lemma \ref{lem::5} that the induced maps 
		\begin{align}
			&\widetilde\Gamma_\zeta(-):\Hom_{\widetilde\g}(\widetilde M(\la)\otimes E,\widetilde M(\la))\hookrightarrow \Hom_{\widetilde\g}(\widetilde M(\la,\zeta)\otimes E,\widetilde M(\la,\zeta)), \label{eq::021} \\
			&\widetilde{\Gamma}_\zeta(-):\Hom_{\widetilde\g}(\widetilde L(\la)\otimes E,\widetilde L(\la))\hookrightarrow \Hom_{\widetilde\g}(\widetilde L(\la,\zeta)\otimes E,\widetilde L(\la,\zeta)), \label{eq::031}
		\end{align} are injective, for any $E\in \widetilde{\mc F}$.  The fact that  they are surjective follows from an analogue of Corollary \ref{cor::2} using the functor $\widetilde{T}_{\nu,\zeta}$ instead of $T_{\nu,\zeta}$. Finally, the fact that $\Ann_{U(\widetilde{\g})}\widetilde{M}(\la,\zeta) =\Ann_{U(\widetilde{\g})}\widetilde{M}(\la)$ follows from \cite[Proposition 26]{Ch21} and the fact $\Ann_{U(\widetilde{\g})}\widetilde{L}(\la,\zeta) =\Ann_{U(\widetilde{\g})}\widetilde{L}(\la)$ follows from \cite[Section 5.2]{Ch212}. Mutatis mutandis the proof of Theorem \ref{thm::maintmAintext} and  the conclusion follows. 
\end{proof}

\subsection{Examples: Lie superalgebras $\gl(m|n)$, $\mf{osp}(2|2n)$ and $\pn$}\label{sect::3222}
In this subsection, we  set up the usual description of the distinguished triangular decompositions for $\widetilde \g =\mf{gl}(m|n), \mf{osp}(2|2n)$ and $\pn$.  We refer to \cite[Chapter 1]{ChWa12} for further details. %We emphasize that the study of simple   modules over Lie superalgebras from \eqref{eq::claA}-\eqref{eq::claP} can be reduce to that over Lie superalgebras $\gl(m|n)$, $\mf{osp}(2|2n)$ and $\pn$; see also \cite[Proposition 6.2]{CM}. 
 Recall that we define $\g := \widetilde{\g}_\oa$. 

\subsubsection{The general linear Lie superalgebra $\mf{gl}(m|n)$}  \label{sect::321}
For non-negative integers $m,n$, the general linear Lie superalgebra $\mathfrak{gl}(m|n)$ 
can be realized as the space of $(m+n) \times (m+n)$ complex matrices
\begin{align*}
	\widetilde{\g}:=\gl(m|n):=  	\left\{  \left( \begin{array}{cc} A & B\\
		C & D\\
	\end{array} \right) \|~ A\in \C^{m\times m}, B\in \C^{m\times n}, C\in \C^{n\times m}, D\in \C^{n\times n}
	\right\},
\end{align*}
%where $A,B,C$ and $D$ are $m\times m, m\times n, n\times m, n\times n$ matrices, respectively.
with the Lie bracket    given by the super commutator. The even subalgebra $\g$ of $\widetilde{\g}$ is isomorphic to $\gl(m)\oplus \gl(n)$.

Let $E_{ab}$, denote the $(a,b)$-matrix unit, for $1\leq a,b \leq m+n$. The standard Cartan subalgebra $\h := \text{span}_{\C}\{E_{aa}|~1\leq a\leq n\}$ consists of diagonal matrices.  Denote by $\{\vare_1, \vare_2,\ldots, \vare_{m+n}\}\subseteq \mf h^\ast$ the dual basis determined by $\vare_b(E_{aa}) =\delta_{ab}$. The set $\Phi_\oa^+$ of positive even roots and the set $\Phi_\ob^+$ of positive odd roots are given by 
\begin{align*}
	&\Phi^+_\oa:=\{\vare_a-\vare_b|~1\leq a< b \leq n, ~m+1\leq a <b\leq m+n \}.\\
	&\Phi_\ob^+:=\{\vare_a-\vare_b|~~1\leq a\leq m <  b \leq m+n\}.
\end{align*} 
We define the set $\Phi^-_\oa:=-\Phi^+_\oa$  of negative even roots  and  the set $\Phi^-_\ob:=-\Phi^+_\ob$  of negative odd roots.  We have the corresponding triangular decomposition $\widetilde \g=\widetilde{\mf n}_- \oplus \mf h\oplus \widetilde{\mf n}_+$, where $\mf n_\pm:=\bigoplus_{\alpha\in \Phi_\oa^\pm\cup \Phi^\pm_\ob}\widetilde \g^\alpha$, and the type I grading $\widetilde \g=\widetilde \g_{-1}\oplus \widetilde \g_0\oplus\widetilde \g_{1}$ is given by 
\begin{align}
&\widetilde \g_{-1}:=\bigoplus_{\alpha\in \Phi^-_\ob}\widetilde \g^\alpha,~\widetilde \g_0:=\widetilde{\g}_\oa,~\widetilde \g_{1}:=\bigoplus_{\alpha\in \Phi^+_\ob}\widetilde \g^\alpha. \label{defn::typeIgrad}
\end{align} Here  $\widetilde \g^\alpha$ denotes the root space corresponding to the root $\alpha$. 
The set $\Pi$ of simple even roots is given by 
\begin{align*}
&\Pi=\{\vare_a-\vare_{a+1}|~1\leq a<n,~n+1\leq a< m+n\}.
\end{align*}

The  bilinear form $({}_-,{}_-):\mf h^\ast \times \mf h^\ast \rightarrow \C$ is determined by $(\vare_a,\vare_b) = \delta_{ab}$ and $(\vare_{c},\vare_{d}) = -\delta_{cd}$, for $1\leq a,b\leq m,~n+1\leq c,d\leq m+n.$
Finally, we  refer  to \cite[Section 1.3]{ChWa12} and \cite[Section 2.2.6]{ChWa12}  for the definition of typical weights for $\gl(m|n)$.
  %We also  refer  to \cite[Section 1.3]{ChWa12} and \cite[Section 2.2.6]{ChWa12}   for more details about the Weyl vector $\widetilde{\rho}$ and typical weights. 

%{\color{red}  Recall that we  set $\widetilde \rho\in \h^\ast$ to be the Weyl vector, that is, it is the half sum of even positive roots minus the half sum of odd positive roots; see also \cite[Section 1.3]{ChWa12}. Note that a weight $\la = \sum_{i=1}^n\la_i\vare_i\in \h^\ast$ is typical  if and only if .}

\subsubsection{The orthosymplectic Lie superalgebra $\mf{osp}(2|2n)$}   \label{sect::322}
For a given positive integer $n$, the orthosymplectic Lie superalgebra $\mf{osp}(2|2n)$ is a   
 subsuperalgebra of $\mf{gl}(2|2n)$:
\begin{equation}
	\widetilde{\g}:=\mf{osp}(2\vert 2n)=
	\left\{ \left( \begin{array}{cccc} c &0 & x &y\\
		0 & -c& v & u\\
		-u^t& -y^t & A &B\\
		v^t &x^t & C& -A^t \\
	\end{array} \right)\|~
	\begin{array}{c}
		c\in \C;\,\, x,y,v,u\in \C^{n};\\
		A,B,C\in \C^{n^2};\\
		B, C \text{ are symmetric}.
	\end{array}
	\right\}. \label{eq::osp}
\end{equation}
We refer the reader to \cite[Section 1.1.3]{ChWa12} and \cite[Section 2.3]{Mu12} 
for more details.  The even subalgebra $\g$ of   $	\widetilde{\g}$  is isomorphic to 
$\C \oplus \mf{sp}(n)$.%, and the odd part of $\mf{osp}(2|2n)$ is isomorphic to $\C^2\otimes \C^{2n}$. 

Define 
$$H_\vare:=E_{11}-E_{22},~H_i:=E_{2+i,2+i}-E_{2+i+n,2+i+n},
\quad \text{ for }1\leq i\leq n.
$$
The standard Cartan subalgebra $\mf h$ of $\mf{osp}(2|2n)$ 
is spanned by $H_\vare$ and $H_1,H_2,~\ldots, H_n$. We let 
$\{\vare,\delta_1,\delta_2,\ldots,\delta_n\}$ be 
the basis of $\mf h^\ast$ dual to  $\{H_\vare,H_1,H_2,\ldots, H_n\}$.
The  bilinear form $({}_-,{}_-):\mf h^\ast \times \mf h^\ast \rightarrow \C$ is given by $$(\vare,\vare) =1,~(\delta_i,\delta_j) =-\delta_{i,j}, (\vare,\delta_i)=0, \text{ for $1\leq i,j\leq n$}.$$ Define the set $\Phi^+_\oa$ of even positive roots  and 
the set $\Phi^+_\ob$ of odd positive roots  by 
\begin{align*}
	&\Phi^+_\oa:=\{\delta_i \pm\delta_j, 2\delta_p|~1\leq i< j <n, ~1\leq p \leq n \}.\\
	&\Phi_\ob^+:=\{\vare \pm \delta_p|~1\leq p\leq n\}.
\end{align*} 
%This gives rise to a triangular decomposition  $\mf g=\mf n^+\oplus \mf h \oplus \mf n^-$,   %with Borel subalgebra $\mf b:= \mf b_\oa \oplus \mf g_1.$ 

  We define the corresponding triangular decomposition $\widetilde\g=\widetilde{\mf n}^+\oplus \mf h \oplus \widetilde{\mf n}^-$ and the  type I grading of $\mf{osp}(2|2n)$ in the same fashion as for $\gl(m|n)$. Again, we  refer  to \cite[Section 1.3]{ChWa12} and \cite[Section 2.2.6]{ChWa12}  for more details about the typical weights for $\mf{osp}(2|2n)$.
  %Again, we  refer  to \cite[Section 1.3]{ChWa12} and \cite[Section 2.2.6]{ChWa12}  for more details about the Weyl vector $\widetilde{\rho}$ and typical weights. 	
  
 %	{\color{red} A weight $\la\in \h^\ast$ is said to be {\em typical} if $\langle \la+\widetilde \rho,\alpha \rangle =0$, for some $\alpha\in \Phi_\ob$.}

%{\color{red} define the grading operator $D^{\gl(m|n)}:=\text{diag}(1,1,\ldots,1,\underbrace{0,\ldots,0}_{n}):=\sum_{i=1}^mE_{ii}$.}

\subsubsection{The periplectic Lie superalgebra} \label{subsect::pen} 
The {\em periplectic Lie superalgebra}~$\pn$  is a subalgebra of   $\mf{gl}(n|n)$.  
The standard matrix realisation is given
by
\[ \mf{pe}(n)=
\left\{ \left( \begin{array}{cc} A & B\\
	C & -A^t\\
\end{array} \right)\| ~ A,B,C\in \C^{n\times n},~\text{$B$ is symmetric and $C$ is skew-symmetric} \right\}.
\]We refer the reader to  \cite[Section 1.1]{ChWa12} for more details.

 Let $\widetilde{\g}:=\pn$. The even subalgebra $\g$ of $\widetilde{\g}$ is isomorphic to  $\gl(n)$. Let $e_{ij}: = E_{ij} -E_{n+j,n+i}\in \g$,  for all $1\leq i,j \leq n$.  The standard Cartan subalgebra $\mf h$ is defined as $\mf h: = \bigoplus_{1\leq i \leq n}\C e_{ii}$. %{\color{red} We can define the associated grading operator $D^{\pn}:=\sum_{i=1}^n e_{ii}\in Z(\g)$.   Just like in subsections \ref{sect::321}, \ref{sect::322}, all standard Whittaker modules and simple Whittaker modules are $\Z$-graded such that the homogeneous components are eigenspaces for $D^{\pn}$ with different eigenvalues;  see also \cite[Subsection 5.3]{CM} for details.}
Let $\{\vare_i|i=1,\ldots,n\}$ be the dual basis of~$\{e_{ii}\}_{1\leq i \leq n}$ in $\mathfrak{h}^*$.  The set $\Phi_\oa^+$ of positive even roots and the set of $\Phi_\ob^+$ positive odd roots are   given by 
\begin{align*}
	&\Phi^+_\oa:=\{\vare_i-\vare_j|~1\leq i< j <n\},
	&\Phi_\ob^+:=\{\vare_i+\vare_j|~1\leq i\leq j\leq n\}.
\end{align*}
The set of negative odd roots is given
\begin{align*}
	&\Phi_\ob^-:=\{-\vare_i-\vare_j|~1\leq i< j\leq n\}.
\end{align*} We may note that $\Phi_\ob^-\neq -\Phi_\ob^+$. 
The simple system $\Pi$ of $\g$ is given by 
\begin{align*}
	&\Pi = \{\vare_i-\vare_{i+1}|~1\leq i<n\}.
\end{align*}

Just like in subsections \ref{sect::321}, \ref{sect::322}, we define the corresponding triangular decomposition $\widetilde \g=\widetilde{\mf n}_+\oplus \mf h  \oplus \widetilde{\mf n}_-$ and the  type I grading of $\pn$ the same fashion. We note that $\dim \widetilde{\g}_{1} >\dim \widetilde{\g}_{-1}.$

The  bilinear form $({}_-,{}_-):\mf h^\ast \times \mf h^\ast \rightarrow \C$ is given by $(\vare_i,\vare_j) = \delta_{ij}$, for $1\leq i,j\leq n.$ Denote by $\Phi_\oa = \Phi^+_\oa\cup -\Phi^+_\oa$ set of all even roots. Recall that we denote $\rho=\frac{1}{2}\sum_{\alpha\in \Phi_\oa^+}\alpha$. A weight $\la\in \h^\ast$ is said to be {\em typical}
\cite[Section 5]{Se02} provided that   
	$(\la+\rho,\alpha) \neq 1$, for any $\alpha\in \Phi_\oa$.
 Furthermore, a weight $\la \in \h^\ast$  is said to be {\em weakly typical} \cite[Section 1.5]{CP} if 
$( \la+\rho,\alpha)\neq 1,$ for any $\alpha\in \Phi_\oa^+$.

   \subsection{Kac functor tools} \label{sect::333}
   We need some preparation before we can prove  Theorem C. Recall that we define $\g := \widetilde\g_{\oa}$.
   \subsubsection{Subalgebras $U(\widetilde \g_{\pm 1})$}
   	 %Let  $\Lambda(\widetilde \g_{-1}) \cong U(\widetilde \g_{-1})$ and $\Lambda(\widetilde \g_{1}) \cong U(\widetilde \g_{1})$ denote the exterior algebra of $\widetilde \g_{-1}$ and $\widetilde \g_{1}$ viewed as a subalgebra of $U(\widetilde \g)$, respectively.
   	 Note that the subalgebras $U(\widetilde \g_{-1})$ and $U(\widetilde \g_{1})$ of $U(\widetilde \g)$ can be  naturally viewed as the exterior algebras $\Lambda(\widetilde \g_{-1})$ and $\Lambda(\widetilde \g_{1})$ of $\widetilde \g_{-1}$ and $\widetilde \g_{1}$, respectively. For any non-negative integer $\ell$, we denote by $\Lambda^\ell (\widetilde \g_{\pm 1})$ the subspace spanned by all $\ell$-th exterior powers in $\Lambda (\widetilde \g_{\pm 1})$, respectively. We set 
   	\begin{align*}
   		&\Lambda^{\text{max}} (\widetilde \g_{-1}):=\Lambda^{\dim{\widetilde\g}_{-1}}(\widetilde \g_{-1}), ~\Lambda^{\text{max}} (\widetilde\g_{1}):=\Lambda^{\dim{\widetilde\g}_{1}} (\widetilde \g_{1}). \end{align*}
   	
   	The following lemma is taken from  \cite[Lemma 5.6]{CM}. 
   	
   	\begin{lem} \label{lem::CMlem56}
   		Assume that $\widetilde{\g}$ is either $\gl(m|n)$ or $\mf{osp}(2|2n)$.  		Let $X^{\pm}\in \Lambda^{\text{max}}(\widetilde \g_{\pm 1})$ be non-zero   vectors. Then we have 
   		\[X^+X^-= \Omega +\sum_{i} x_i^-r_ix_i^+,\]
   		where $\Omega\in Z(\g)$ $r_i\in U(\g)$, and $x_i^{\pm}\in \widetilde\g_{\pm 1}\Lambda(\widetilde\g_{\pm 1})$.
   	\end{lem}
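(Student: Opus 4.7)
The plan is to use the PBW decomposition of $U(\widetilde{\g})$ together with a simple weight-component argument. Since $\widetilde{\g}_{-1}$ and $\widetilde{\g}_1$ are abelian Lie super-subalgebras, the PBW theorem gives a vector space isomorphism $U(\widetilde{\g}) \cong \Lambda(\widetilde{\g}_{-1}) \otimes U(\g) \otimes \Lambda(\widetilde{\g}_1)$, with multiplication respecting left-to-right order. So one may write the product $X^+ X^-$ uniquely as a sum $\sum_{a,c} R_{a,c}$ of terms $y\, u\, z$ with $y \in \Lambda^a(\widetilde{\g}_{-1})$, $u \in U(\g)$, and $z \in \Lambda^c(\widetilde{\g}_1)$.

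First, I would observe that the total $\mf h$-weight of $X^+ X^-$ is zero, because $X^+$ has weight equal to the sum of all roots appearing in $\widetilde{\g}_1$ while $X^-$ has the opposite weight. Next I would introduce a linear functional $\phi : \mf h^\ast \rightarrow \C$ chosen so that $\phi$ vanishes on every even root in $\Phi_\oa$ but takes the value $+1$ on every weight of $\widetilde{\g}_1$ and $-1$ on every weight of $\widetilde{\g}_{-1}$. For $\widetilde{\g} = \gl(m|n)$ one may take $\phi(\mu) = \sum_{i=1}^m \mu_i$ (the block-sum in the $\vare$-basis), and for $\widetilde{\g} = \mf{osp}(2|2n)$ one may take $\phi(\mu) = \mu(H_\vare)$ (the $\vare$-component). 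In both cases $\phi$ has the required properties. Consequently, if a PBW monomial $yuz$ appears in $R_{a,c}$, then $\phi$ applied to its weight equals $c - a$. Since the weight of $X^+ X^-$ is zero, we conclude $a = c$ for every term that appears; in particular, only contributions with $a = c = 0$ or with $a, c \geq 1$ survive.

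This already yields a decomposition $X^+ X^- = \Omega + \sum_i x_i^- r_i x_i^+$ with $\Omega := R_{0,0} \in U(\g)$ and with every $x_i^\pm$ lying in the augmentation ideal $\widetilde{\g}_{\pm 1}\Lambda(\widetilde{\g}_{\pm 1})$. To conclude, I would verify that $\Omega \in Z(\g)$ by showing that $X^+ X^-$ is $\ad \g$-invariant in $U(\widetilde{\g})$. Indeed, $\g$ acts on each one-dimensional space $\Lambda^{\text{max}}(\widetilde{\g}_{\pm 1})$ by the trace character $\pm \chi$ of the adjoint action of $\g$ on $\widetilde{\g}_1$, so $[g, X^+ X^-] = \chi(g)\, X^+ X^- - \chi(g)\, X^+ X^- = 0$ for all $g \in \g$. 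Writing $[g, \Omega] = -[g, \sum_i x_i^- r_i x_i^+]$ and noting that the left-hand side lies in $U(\g)$ while the right-hand side lies in $\widetilde{\g}_{-1}\Lambda(\widetilde{\g}_{-1}) \cdot U(\g) \cdot \widetilde{\g}_1\Lambda(\widetilde{\g}_1)$ by the Leibniz rule (since $\ad g$ preserves each $\widetilde{\g}_{\pm 1}$), PBW uniqueness forces both sides to vanish. Thus $[g, \Omega] = 0$ for every $g \in \g$, as required.

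The main technical point is the existence of the functional $\phi$, which is precisely where the hypothesis that $\widetilde{\g}$ is of type $A$ or $C$ enters; for $\mf{p}(n)$ the analogous setup is more subtle because $\widetilde{\g}_{+1}$ and $\widetilde{\g}_{-1}$ no longer share the same dimension (nor are they dual as $\g$-modules in the naive sense), while everything else in the argument is routine PBW bookkeeping.
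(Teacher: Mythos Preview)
The paper does not prove this lemma; it is quoted verbatim from \cite[Lemma~5.6]{CM}, so there is no in-paper argument to compare against.

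Your proof is correct and self-contained. The functional $\phi$ you construct is exactly the eigenvalue of the grading operator $D^{\widetilde{\g}}$ that the paper introduces a few lines later in \eqref{eq::graop}; equivalently, you are using the $\Z$-grading on $U(\widetilde{\g})$ induced by the type-I grading $\widetilde{\g}=\widetilde{\g}_{-1}\oplus\widetilde{\g}_0\oplus\widetilde{\g}_1$, under which $X^+X^-$ sits in degree $\dim\widetilde{\g}_1-\dim\widetilde{\g}_{-1}=0$ precisely for $\gl(m|n)$ and $\mf{osp}(2|2n)$. One small point worth making explicit: when you deduce $a=c$ from ``the weight of $X^+X^-$ is zero,'' you are implicitly using that the PBW basis consists of $\mf h$-weight vectors, so only weight-zero monomials can contribute; this is clear but deserves a word. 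The centrality argument for $\Omega$ is also fine --- your claim that $\g$ acts on $\Lambda^{\max}(\widetilde{\g}_{\pm 1})$ by $\pm\chi$ amounts to $\widetilde{\g}_{-1}\cong\widetilde{\g}_1^\ast$ as $\g$-modules, which holds for basic classical superalgebras via the invariant form; alternatively one can simply note that root vectors of $\g$ annihilate $X^\pm$ for weight reasons and that $\mathrm{wt}(X^+)+\mathrm{wt}(X^-)=0$.
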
 Let $\chi_\la: Z(\g)\rightarrow \C$ be the central character of $\g$ associated to $\la\in \h^\ast$. By \cite[Subsection
   	4.2]{Go}, we have $\chi_\la(\Omega) \neq 0$ if and only if $\la$ is typical.
   	% Next, we may observe that Recall that $J(\la):=\Ann M(\la,\zeta) =\Ann M(\la)$ (see, e.g., \cite[Lemma 2.2]{MS}), for any $\la\in \h^\ast$. By assumption, we have $U(\g)/J(\la) \cong \mc L(M(\la,\zeta),M(\la,\zeta)).$	Now, we claim that $\Res \mc L(M(\la,\zeta),M(\la,\zeta))$ is a weight module, and therefore it decomposes into a direct sum of simple $\g_\oa$-modules with finite multiplicities. First, since $\Res \mc L(M(\la,\zeta),M(\la,\zeta))$ is isomorphic to  $ \Lambda(\g_{-1})\otimes \Lambda(\mf g_{-1})^\ast\otimes \mc L(M_0(\la,\zeta),M_0(\la,\zeta))$, it follows that the restriction $\Res \mc L(M(\la,\zeta),M(\la,\zeta))$ is a direct sum of finite-dimensional simple $\g_\oa$-modules. Next, for any $V\in \mc F_0$, we have  $$\Hom_{\g_\oa}(V, \Res \mc  L(M(\la,\zeta),M(\la,\zeta))) \cong \Hom_{\g}(\Ind(V)\otimes M(\la,\zeta), M(\la,\zeta)).$$
 
 	\subsubsection{Kac modules}
 	%To prepare for a proof Theorem C for $\pn$, 
 	In this subsection we recall the Kac induction functor  from \cite[Section 2.4]{CM}, which originated in the work     of Kac \cite{Ka1, Ka2}. 
 	
 	Let $\widetilde \g_{\geq 0}:=\widetilde \g_0+\widetilde \g_1$. For any $V\in\g\mod$, we may extend $V$ to a $\widetilde\g_{\geq 0}$-module by letting $\widetilde\g_1V=0$ and define the {\em Kac module}: 
 	\begin{align}
 		&K(V):=U(\widetilde{\g})\otimes_{U(\widetilde{\g}_{\geq 0})}V.
 	\end{align}
 	This defines the so-called (exact) {\em Kac functor} $K(_-): \g\mod\rightarrow \widetilde{\g}\mod$. 	We may observe that  $$\Res K(V) \cong   \Lambda(\widetilde \g_{-1}) \otimes V,$$ 
 	as $\g$-modules. Since     $K(M(\la,\zeta))\cong \widetilde{M}(\la,\zeta),$  for any $\la\in \h^\ast$ and $\zeta\in \ch \mf n_+$, we can view the standard Whittaker $\g$-module $M(\la,\zeta)$ as the submodule $\Lambda^0(\widetilde{\g}_{-1})\otimes M(\la,\zeta)$ of $\Res \widetilde{M}(\la,\zeta)$. 
 	%For given $\la\in \h^\ast$ and $\zeta\in \ch \mf n_+$, we may note that $$\Res \widetilde{M}(\la,\zeta) \cong \Res U(\widetilde{\g})\otimes_{U(\widetilde \g_{\geq 0})} M(\la,\zeta)\cong   \Lambda(\widetilde \g_{-1}) \otimes M(\la,\zeta).$$ 

  For $\widetilde{\g} = \gl(m|n), \mf{osp}(2|2n)$ and $\pn$, 	we define the associated  grading operators:
 	\begin{align}
 		&D^{\widetilde{\g}}:=\left\{ \begin{array}{lll} \sum_{i=m+1}^{m+n}E_{ii},\quad \text{~for $\widetilde{\g}= \gl(m|n)$;} \\ 
 			H_\vare = E_{11}-E_{22},\quad \text{~~~for $\widetilde{\g}= \mf{osp}(2|2n)$;}\\
 		\sum_{i=1}^n e_{ii}= \sum_{i=1}^n(E_{ii}-E_{n+i,n+i}), \quad \text{~~for $\widetilde{\g} =\pn$.} \end{array} \right. \label{eq::graop}
 	\end{align} 
  Let $\chi: Z(\g)\rightarrow \C$ be a central character of $\g$. For any $V\in \g\mod_\chi$, the associated Kac module $K(V)$ and its quotient decomposes as a finite 
  direct sum of generalized eigenspaces of $D^{\widetilde \g}$, 
  namely,  % Consider the generalized eigenspaces decomposition of $K(V)$ respect to the grading operator $D^{\widetilde{\g}}$ from \eqref{eq::graop}:
  we have  \begin{align}
  	&K(V)=\bigoplus_{i=0}^{\dim \widetilde \g_{-1}} K(V)_{\chi(D^{\widetilde \g})-i_{\widetilde{\g}}} = \bigoplus_{i=0}^{\dim \widetilde \g_{-1}} \Lambda^i(\widetilde \g_{-1})\otimes V,\label{eq::30}
  \end{align} where 
  \begin{align*}
  	&	i_{\widetilde{\g}}:=  	    \begin{cases} i, &\mbox{ for~$\widetilde \g =\gl(m|n)$ or $\mf{osp}(2|2n)$;}\\  	    	2i,&\mbox{ for $\widetilde \g = \pn$}, \end{cases}
  \end{align*} and $K(V)_{\chi(D^{\widetilde \g})-i_{\widetilde{\g}}}$ denotes  the generalized eigenspace of $D^{\widetilde{\g}}$ with eigenvalue $\chi(D^{\widetilde \g})-i_{\widetilde{\g}}$, for $0\leq i\leq \dim \widetilde \g_{-1}$. 
  We may observe that $V\subset K(V)$ is the homogeneous component with the biggest generalized  eigenvalue.

  In the special case that $V$ is a simple $\g$-module, it admits a central character by Dixmier’s theorem \cite[Proposition 2.6.8]{Di}. 
  The decomposition in \eqref{eq::30} gives rise to $\Z$-gradings on $K(V)$ and its (simple) top such that the homogeneous components are eigenspaces for $D^{\widetilde \g}$ with different eigenvalues. We refer to \cite[Subsections 5.1, 5.2, 5.3]{CM} for all details. In particular, all standard   and simple Whittaker $\widetilde{\g}$-modules  are $\Z$-graded in this sense. 
 
 %  For any $\g$-module $V$ that admits a central character of $\g$, the associated Kac module $K(V)$ and its simple top can be decomposed into $D^{\widetilde \g}$-eigenspaces by Dixmier’s theorem \cite[Proposition 2.6.8]{Di}. This gives rise to $\Z$-gradings on these modules such that the homogeneous components are eigenspaces for $D^{\pn}$ with different eigenvalues. We refer to \cite[Subsections 5.1, 5.2, 5.3]{CM} for all details. In particular, all standard Whittaker $\widetilde{\g}$-modules and simple Whittaker $\widetilde{\g}$-modules  are $\Z$-graded in this sense. 

 	%We proceed to discuss Kostant's problem 
  	Denote by $\g\mod_{Z(\g)}$ the full subcategory of $\g\mod$ consisting of modules that are locally finite over $Z(\g)$. In the following propositions, we give  two sufficient conditions for Kostant negative Kac modules induced from objects in $\g\mod_{Z(\g)}$.
 	
 	\begin{prop} \label{prop::14}
 		Assume that $\widetilde \g=\gl(m|n)$ or $\mf{osp}(2|2n)$. Suppose that $V\in \g\mod_{Z(\g)}$. Then  $K(V)$ is Kostant negative unless $V\in \g\mod_{\chi_\la}$, for some typical weight $\la$.  %Let $V\in \g\mod_{\chi_\la}$, for some central character $\chi_\la$ of $\g$. Then  $K(V)$ is Kostant negative unless $\la$ is typical. %is typical, that is, $\chi=\chi_{\la}$ for some typical weight $\la$.
 	\end{prop}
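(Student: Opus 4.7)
The plan is to produce an element $p_V \in \mc L(K(V), K(V))$ — concretely, the $\g$-linear projection onto the top piece $V = \Lambda^0(\widetilde\g_{-1}) \otimes V$ of the $\Z$-decomposition \eqref{eq::30} — and to show that $p_V$ lies in the image of the representation map $U(\widetilde\g) \to \mc L(K(V), K(V))$ precisely when $V$ has a typical central character. Since $\g$ preserves the $\Z$-grading, $p_V$ is $\g$-linear; hence $\ad\g$ annihilates it and $p_V$ lies in $\mc L(K(V), K(V))$.

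The key computation, via Lemma \ref{lem::CMlem56}, is that $X^+X^-$ acts on $K(V)$ as $\chi_\la(\Omega)\cdot p_V$: on the top layer $V$, the summand $\sum_i x_i^- r_i x_i^+$ vanishes because $\widetilde\g_1\cdot V=0$, so $X^+X^-$ acts as $\Omega$, i.e.\ as $\chi_\la(\Omega)\cdot \mathrm{id}_V$; on $\Lambda^i(\widetilde\g_{-1})\otimes V$ with $i\geq 1$, $X^-$ already vanishes since $\Lambda^{i+\dim\widetilde\g_{-1}}(\widetilde\g_{-1})=0$. Thus in the typical case $\chi_\la(\Omega)\neq 0$ we get $p_V=\chi_\la(\Omega)^{-1}X^+X^-$ in the image, so no obstruction arises. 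In the atypical case $X^+X^-\in\Ann_{U(\widetilde\g)}(K(V))$, and one must show no other element of $U(\widetilde\g)$ acts on $K(V)$ as $p_V$. For this I would use that $p_V$ commutes with $D^{\widetilde\g}$ to restrict attention to the zero-graded part of $U(\widetilde\g)$ under $\ad D^{\widetilde\g}$, then filter by the $\widetilde\g_1$-degree: the condition $u|_V=\mathrm{id}_V$ forces the $\widetilde\g_1$-degree-zero contribution to be congruent to $1$ modulo $\Ann_{U(\g)}(V)$ (any $\widetilde\g_1$-factor kills $V$), while the condition $u|_{\Lambda^{\max}(\widetilde\g_{-1})\otimes V}=0$ then forces the remaining higher $\widetilde\g_1$-degree contributions to match, modulo $\Ann_{U(\widetilde\g)}(K(V))$, a non-zero multiple of $X^+X^-$ — which is impossible when $\chi_\la(\Omega)=0$.

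To obtain the proposition as stated, one decomposes $V=\bigoplus_\chi V_\chi$ into generalised $Z(\g)$-character eigenspaces; exactness of the Kac functor gives $K(V)=\bigoplus_\chi K(V_\chi)$ as $\widetilde\g$-modules, so the presence of an atypical eigenvalue $\chi=\chi_\la$ will make $K(V)$ Kostant negative, via the obstruction constructed above applied to $K(V_\chi)$. The main obstacle I anticipate is the rigour of the PBW/grading reduction in the atypical case: one must rigorously identify the grade-zero part of $U(\widetilde\g)$ acting as the top-layer projector modulo $\Ann_{U(\widetilde\g)}(K(V))$ with a scalar multiple of $X^+X^-$. This should follow from the non-degenerate pairing $\widetilde\g_1\times\widetilde\g_{-1}\to\g$ — which gives the Kac module a Koszul-like structure for $\gl(m|n)$ and $\mf{osp}(2|2n)$ — but carrying out the combinatorial identification cleanly requires careful bookkeeping with the super-commutator expansions and the precise form of Lemma \ref{lem::CMlem56}.
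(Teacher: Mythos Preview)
Your proposed obstruction $p_V$ is never actually an obstruction: it always lies in the image of $U(\widetilde\g)\to\mc L(K(V),K(V))$, regardless of typicality. The grading operator $D^{\widetilde\g}$ is \emph{central} in $\g$ for both $\gl(m|n)$ and $\mf{osp}(2|2n)$ (it is the identity of the $\gl(n)$-summand, respectively the generator of the $\C$-summand). Hence when $V$ admits the central character $\chi_\la$, the operator $D^{\widetilde\g}$ acts by pairwise distinct scalars on the pieces $\Lambda^i(\widetilde\g_{-1})\otimes V$, and the Lagrange interpolation polynomial in $D^{\widetilde\g}$ that is $1$ at $i=0$ and $0$ at $i=1,\dots,d$ already realises $p_V$ inside $U(\g)\subset U(\widetilde\g)$. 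Concretely, for $\gl(1|1)$ with the atypical module $V=\C_{(a,-a)}$, the element $E_{11}-a+1\in U(\g)$ acts as $p_V$. Your PBW sketch breaks exactly where you suspected, but for a fatal reason rather than a technical one: the condition $r_0\equiv 1\pmod{\Ann_{U(\g)}V}$ tells you nothing about how $r_0$ acts on $\Lambda^{\max}(\widetilde\g_{-1})\otimes V$, since $\Ann_{U(\g)}(V)$ does not annihilate that piece; the degree-zero part $r_0$ alone can already vanish there without any help from higher $\widetilde\g_1$-degree terms.

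The paper avoids this by choosing a \emph{degree-shifting} obstruction instead of a projector: the map $\widehat{\mathrm{Id}_V}$ sending $X^-\otimes v\mapsto v$ and annihilating $\Lambda^{<d}(\widetilde\g_{-1})\otimes V$. This shifts the $D^{\widetilde\g}$-grade by the maximal possible amount $d$, so if $X=\sum r_{ij}X_i^+X_j^-\in U(\widetilde\g)$ represents it, then on $X^-\otimes v$ the only surviving contribution comes from a term $r_d X_d^+$ with $X_d^+\in\Lambda^{\max}(\widetilde\g_1)$: any $X_j^-$ with $j>0$ already kills the bottom layer, and only $i=d$ lands back in $\Lambda^0$. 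Then $r_dX_d^+X^-v=v$ combined with Lemma~\ref{lem::CMlem56} gives $r_d\Omega v=v$ on a $Z(\g)$-eigenvector $v\in V$, forcing $\chi_\la(\Omega)\neq 0$. The essential point is that a maximal degree shift can \emph{only} be produced through $\Lambda^{\max}(\widetilde\g_1)$, which inescapably brings $\Omega$ into the computation; a degree-zero projector imposes no such constraint, and that is why your element fails to detect atypicality.
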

 	\begin{proof} Since $V$ decomposes as a direct sum of modules in $\g\mod_\chi$ for some central characters $\chi$ of $\g$, we may assume without loss of generality that $V\in \g\mod_{\chi_\la}$ for some weight $\la\in \h^\ast$. Suppose that $K(V)$ is Kostant positive. 
 		Define $d:=\dim \widetilde{\g}_{-1}$. 
 		We fix a non-zero vector $X^-\in \Lambda^{\text{max}}(\widetilde \g_{-1})$. For each $f\in \mc  L(V, V)$, we define an associated linear map  $\hat f: K(V)\rightarrow K(V)$ by declaring, for any $y\in \Lambda^\ell (\widetilde \g_{-1})$ and  $v\in V$, 
 		\begin{align}
 			&\widehat f(y\otimes v) = \begin{cases}f(v), &\mbox{ if~$y=X^-$;}\\
 				0,&\mbox{if $\ell <d$}.
 			\end{cases}\label{eq::277}\end{align}
 		Let $f:=\text{Id}_V$ be the identity map on $V$.   By a direct calculation, $\C \widehat{\text{Id}_V}$ is a  $\g$-submodule of $\Hom_\C(K(V), K(V))^{\text{ad}}$, and so $\widehat{\text{Id}_V}\in \mc L(K(V), K(V)).$ By assumption, there exists $X\in U(\widetilde\g)$ such that $X(y\otimes v) = \widehat{\text{Id}_V}(y\otimes v)$, for any $y\in \Lambda^\ell(\widetilde \g_{-1})$ and  $v\in V$. %if we set $y$ above  to be a non-zero vector $X^-\in \Lambda^{\text{max}}(\widetilde \g_{-1})$,
 		This implies that   \begin{align}
 			&XX^-\otimes v = \widehat{\text{Id}_V}(X^-\otimes v) = \text{Id}_V(v)=v, \label{eq::3111}
 		\end{align}for any $v\in V$. 
 		 We write \begin{align}
 			&X = \sum_{i,j=0}^{d}  r_{ij} X^+_{i} X^-_{j}, \label{eq::322}
 		\end{align} so that   $X_i^+\in \Lambda^i(\widetilde \g_{1})$, $X_j^-\in \Lambda^j(\widetilde \g_{- 1})$ and $r_{ij}\in U(\g)$ for $0\leq i,j \leq d$. Then it follows by  \eqref{eq::3111} and \eqref{eq::30}  that $r_{d} X^+_{d}X^-v=v,$ for any $v\in V$. By Lemma \ref{lem::CMlem56}, there are $\Omega\in Z(\g)$ and $x_i^{\pm}\in \widetilde\g_{\pm 1}\Lambda(\widetilde\g_{\pm 1})$ such that  $X^+_dX^-= \Omega +\sum_{i} x_i^-r_ix_i^+$. By definition, there exists a non-zero vector $v\in V$ such that $(\chi_\la(z)-z)v=0$, for any $z\in Z(\g)$. It follows by  $0\neq v = r_{d} \Omega v =r_{d} \chi_\la(\Omega)v$ that $\chi_\la(\Omega)\neq 0$. This implies that $\la$ is typical. 
 	\end{proof}
 	
 	The following proposition is an analogue of Proposition \ref{prop::14} for $\pn$.

 %	The following lemma is taken from \cite[Proposition 4.1]{CP} (see also \cite[Lemma 3.1]{Se96}). \begin{lem} \label{lem::CP} Let $\la\in \h^\ast$. Then 		$\la$ is weakly typical if and only if the   Kac module
 		%\[K(L()):=U(\widetilde{\g})\otimes_{U(\widetilde{\g}_{\geq 0})}L(\la)\]	$K(L(\la))$ is simple.  	\end{lem} 
 	
 	\begin{prop}  \label{prop::16}
 		Assume that $\widetilde \g = \pn$. Let  $V\in \mc O$. Then $K(V)$ is Kostant negative unless $V\in \mc O_{\chi_\la}$, for some weakly typical weight $\la$.
 	\end{prop}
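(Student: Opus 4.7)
The strategy is to mirror the proof of Proposition \ref{prop::14}, adapting it to the two features that distinguish $\pn$ from $\gl(m|n)$ and $\mf{osp}(2|2n)$: the strict inequality $\dim \widetilde{\g}_{1} > \dim \widetilde{\g}_{-1}$, and the fact that the vanishing of the relevant central operator on a $\chi_\la$-block is controlled by weak typicality rather than typicality. Without loss of generality I would assume that $V \in \mc O_{\chi_\la}$ for a fixed weight $\la \in \h^\ast$. Setting $d := \dim \widetilde{\g}_{-1} = \binom{n}{2}$ and fixing a non-zero vector $X^- \in \Lambda^{\max}(\widetilde{\g}_{-1})$, I would define the linear map $\widehat{\mathrm{Id}_V}: K(V) \to K(V)$ by the recipe of \eqref{eq::277}, sending $X^- \otimes v$ to $v$ and annihilating every component $\Lambda^\ell(\widetilde{\g}_{-1}) \otimes V$ with $\ell < d$ of the decomposition \eqref{eq::30}. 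Exactly as in Proposition \ref{prop::14}, the line $\C \widehat{\mathrm{Id}_V}$ is a one-dimensional $\ad \g$-submodule of $\Hom_\C(K(V), K(V))$, so $\widehat{\mathrm{Id}_V} \in \mc L(K(V), K(V))$.

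Suppose $K(V)$ is Kostant positive, so that there exists $X \in U(\widetilde{\g})$ whose action on $K(V)$ coincides with that of $\widehat{\mathrm{Id}_V}$. Expanding $X = \sum r_{ij} X^+_i X^-_j$ in a PBW basis adapted to $\widetilde{\g} = \widetilde{\g}_{-1} \oplus \g \oplus \widetilde{\g}_1$, applying $X$ to $X^- \otimes v$ kills every term with $j > 0$ (since $X^-_j X^- \in \Lambda^{>d}(\widetilde{\g}_{-1}) = 0$), and the $\Z$-grading on $K(V)$ provided by $D^{\widetilde{\g}}$ --- whose shift on $\Lambda^i(\widetilde{\g}_1)$ is $2i$ in the $\pn$ case, per \eqref{eq::30} --- forces $i = d$ in order to land back in the top homogeneous component $V$. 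One therefore obtains an equation of the form $r_{d} X^+_d X^- v = v$ for every $v \in V$, where $X^+_d \in \Lambda^d(\widetilde{\g}_1)$ is the (unique up to scalar) weight vector of weight $-\mathrm{wt}(X^-) = \sum_{i<j}(\vare_i+\vare_j)$.

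The final step is to establish the $\pn$-analogue of Lemma \ref{lem::CMlem56}: by iterated use of the super-brackets $[\widetilde{\g}_1, \widetilde{\g}_{-1}] \subseteq \g$, the product $X^+_d X^-$ can be rewritten as $\Omega_{\pn} + \sum x_i^- s_i x_i^+$ with $\Omega_{\pn} \in U(\g)$, $s_i \in U(\g)$, and $x_i^{\pm} \in \widetilde{\g}_{\pm 1}\Lambda(\widetilde{\g}_{\pm 1})$. The element $\Omega_{\pn}$ is $\g$-invariant by construction, hence acts on the $\chi_\la$-block of $V$ by a scalar; combining the explicit computation in Serganova's paper \cite{Se02} with the definition of weak typicality from \cite[Section 1.5]{CP}, this scalar is (up to a non-zero factor) the product $\prod_{\alpha \in \Phi_\oa^+}((\la+\rho, \alpha) - 1)$, which is non-zero precisely when $\la$ is weakly typical. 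Because $r_{d} X^+_d X^- v = v$ for a non-zero $v$ forces this scalar to be non-zero, $\la$ is weakly typical, completing the proof by contrapositive.

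The main obstacle lies in the last step, namely the explicit identification of $\Omega_{\pn}$ and the computation of its $\chi_\la$-eigenvalue. In the $\gl$ and $\mf{osp}$ cases, Lemma \ref{lem::CMlem56} provides an $\Omega \in Z(\g)$ whose non-vanishing is equivalent to ordinary typicality. For $\pn$, however, the asymmetry $\dim \widetilde{\g}_1 > \dim \widetilde{\g}_{-1}$ means $X^+_d$ is not of top exterior degree on the positive side, and the supercenter of $U(\pn)$ is too small to produce a single Casimir detecting weak typicality. One must therefore recognise $\Omega_{\pn}$ as a commuting product of $\g$-elements whose joint eigenvalue factors through the weak typicality polynomial; matching this factorisation with the definition in \cite[Section 1.5]{CP} is the technical heart of the argument, though morally it is parallel to the $\gl$ and $\mf{osp}$ cases.
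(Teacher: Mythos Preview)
Your approach has a genuine gap at the step you yourself flag as the obstacle. For $\pn$ one has $\dim\widetilde\g_1=\binom{n+1}{2}>\binom{n}{2}=d$, so $\Lambda^d(\widetilde\g_1)$ is not one-dimensional. Consequently the PBW expansion of $X$ does not single out a canonical $X^+_d$: the grading argument only gives $\sum_k r_{d,k}\,X^+_{d,k}X^-\,v=v$ for a basis $\{X^+_{d,k}\}$ of $\Lambda^d(\widetilde\g_1)$, with coefficients $r_{d,k}\in U(\g)$ depending on the unknown $X$. Your claim that the weight-$(-\mathrm{wt}(X^-))$ vector in $\Lambda^d(\widetilde\g_1)$ is unique up to scalar is already false for $n=3$ (both $\{\vare_1+\vare_2,\vare_1+\vare_3,\vare_2+\vare_3\}$ and $\{2\vare_1,2\vare_2,2\vare_3\}$ give that weight). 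More seriously, even fixing one $X^+_d$, having weight zero does not make $X^+_dX^-$ (or its $U(\g)$-component $\Omega_{\pn}$) lie in $Z(\g)$; in the $\gl/\mf{osp}$ case Lemma~\ref{lem::CMlem56} works precisely because $X^+$ and $X^-$ each span one-dimensional $\g$-representations, which fails here. So $\Omega_{\pn}$ has no reason to act by a scalar on the $\chi_\la$-block, and the references \cite{Se02,CP} do not supply the explicit eigenvalue formula you invoke---they give the qualitative simplicity criterion for $K(L(\la))$, not a central element detecting it.

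The paper circumvents exactly this difficulty by a categorical reduction rather than a central-element computation. It introduces the full subcategory $\mc S\subset\g\mod$ consisting of those $N$ for which some $X_N\in U(\widetilde\g)$ realises $X^-\otimes v\mapsto v$ on $K(N)$, observes (via the $\widehat{\mathrm{Id}_V}$ construction you describe) that Kostant positivity of $K(V)$ forces $V\in\mc S$, and proves that $\mc S$ is closed under subobjects and quotients by functoriality of $K$. This reduces to the simple case $N=L(\mu)$. There the condition $L(\mu)\in\mc S$ forces $K(L(\mu))$ to be simple: any non-zero $\widetilde\g$-submodule is $D^{\widetilde\g}$-graded, its lowest component is annihilated by $\widetilde\g_{-1}$ and hence lies in $X^-\otimes L(\mu)$, whence equals it by simplicity; applying $X_{L(\mu)}$ then shows the submodule contains $L(\mu)$ and is therefore everything. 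The simplicity criterion $K(L(\mu))$ simple $\Leftrightarrow$ $\mu$ weakly typical from \cite[Proposition~4.1]{CP} finishes the argument. No analogue of Lemma~\ref{lem::CMlem56} is needed, and the asymmetry $\dim\widetilde\g_1>\dim\widetilde\g_{-1}$ becomes irrelevant.
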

 	\begin{proof}  Without loss of generality, we may suppose that $V\in \mc O_{\chi_\la}$, for some $\la\in \h^\ast$. Assume that $K(V)$ is Kostant positive. Fix a non-zero vector $X^-\in \Lambda^{\text{max}}(\widetilde \g_{-1})$.
 		Let $\mc S$ be the full subcategory of 
 		$\g\mod$ consisting of objects $N$ in $\g\mod$ such that there exists a vector $X_N\in U(\widetilde \g)$ satisfying that $X_N(X^-\otimes v) =v$ in $K(N)$,  for any $v\in N$. We claim that $\mc S$ is closed under taking submodules and quotients. To see this, let $N\in \mc S$ and 
 		\begin{align*}
 	   &0\rightarrow N'\xrightarrow{\psi_1} N\xrightarrow{\psi_2} N'' \rightarrow 0,
 		\end{align*} be a short exact sequence in $\g\mod$. It induces an epimorphism $K(\psi_2) = 1\otimes \psi_2:  K(N)\rightarrow K(N'')$. By definition, we have  $$X_NX^-\otimes \psi_2(v) = K(\psi_2)(X_NX^-\otimes v)=\psi_2(v) \text{ in $K(N')$},$$ for any $v\in N$. This proves that $N''\in \mc S$. The assertion that $N'\in \mc S$ can be proved in a similar fashion. 
 	
 	   Finally, for  $\la\in \h^\ast$, the corresponding Kac module 
 	   $K(L(\la))$ is simple if and only if $\la$ is weakly typical, by \cite[Proposition 4.1]{CP}; see also \cite[Lemma 3.1]{Se02} and \cite[Lemma 5.11]{ChCo}. This completes the proof.
 	\end{proof}
 
 \begin{rem} 
 	In the case that  $\widetilde{\g} =\gl(m|n)$ or $\mf{osp}(2|2n)$, for a given simple $\g$-module $V$ the corresponding Kac module $K(V)$ is simple if and only if $V$ admits a central character $\chi_\la$ for some typical weight $\la$; see, e.g., \cite[Corollary 6.8]{CM} and also \cite{Ka1} and \cite[Lemma 3.3.1]{Ger98}. This together with the argument used in the proof of  Proposition \ref{prop::16}  provide an alternative proof of Proposition \ref{prop::14} for this  special case.
 \end{rem}

%\subsection{Kostant's problem for $M(\la,\zeta)$ over  $\gl(m|n)$}\label{sect::intro}
\subsection{Kostant's problem for   Whittaker modules over  $\gl(m|n)$ and $\mf{osp}(2|2n)$} \label{sect::32}
%In this section, we give prove Theorem C for Lie superalgebras $\gl(m|n)$ and $\mf{osp}(2|2n)$.

%Let $\g$ be a quasi-reductive Lie superalgebra. Let $\mc F$ be the category of all finite-dimensional $\g$-modules. Let $\mc P$ be the full subcategory of $\mc F$ of projective modules in $\mf F$. For each simple object $E\in \mc F$, let $P(E)$ denote the projective cover of $E$ in $\mc F$. 

\subsubsection{}%{Proof of Theorem C-(1) for $\widetilde \g = \gl(m|n), \mf{osp}(2|2n)$}  
In this subsection we establish the following result.

\begin{thm} \label{thm::13} Assume that $\widetilde \g$ is either $\gl(m|n)$ or $\mf{osp}(2|2n)$ with distinguished triangular decompositions   introduced in subsections \ref{sect::321}, \ref{sect::322}. 
	Then, for any $\zeta\in \ch\mf n_+, \la\in \h^\ast$, %be  a $W_\zeta$-anti-dominant weight. 
	 the following are equivalent:
	\begin{itemize}
		\item[(a)]  $\widetilde{M}(\la,\zeta)$ is Kostant positive.
		\item[(b)]   $M(\la,\zeta)$ is Kostant positive, and $\la$ is typical. 
	\end{itemize}
	In case that $\langle \la,\alpha^\vee\rangle \in \Z$, for any $\alpha\in \Pi_\zeta$, the Kostant's problem has a positive answer for $\widetilde M(\la,\zeta)$ if and only if $\la$ is typical.
\end{thm}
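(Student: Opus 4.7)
The strategy is to reduce the statement to the more general relationship between Kostant positivity and the Kac functor, using the crucial identification $\widetilde{M}(\la,\zeta)\cong K(M(\la,\zeta))$ coming from the construction of Section~\ref{sect::333}. Once (a)$\Leftrightarrow$(b) is established, the ``in case'' moreover-part is immediate: if $\langle\la,\alpha^\vee\rangle\in\Z$ for every $\alpha\in\Pi_\zeta$, then Theorem~A(1) guarantees that $M(\la,\zeta)$ is Kostant positive, so (b) collapses to the single condition that $\la$ is typical.

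For the direction (a)$\Rightarrow$(b), the typicality of $\la$ falls out directly from Proposition~\ref{prop::14} applied to $V=M(\la,\zeta)$: this $V$ is a Whittaker $\g$-module and thus locally $Z(\g)$-finite with central character $\chi_\la$, and the hypothesis that $K(V)=\widetilde{M}(\la,\zeta)$ is Kostant positive forces $\la$ to be typical. The more delicate half is to obtain Kostant positivity of $M(\la,\zeta)$ from that of $K(M(\la,\zeta))$. I would proceed by noticing that every $\varphi\in\mc L(M(\la,\zeta),M(\la,\zeta))$ extends to an element $\widehat\varphi\in\mc L(K(M(\la,\zeta)),K(M(\la,\zeta)))$ along the recipe of \eqref{eq::277} (acting as $\varphi$ on the top $D^{\widetilde\g}$-component $\Lambda^0(\widetilde\g_{-1})\otimes M(\la,\zeta)$ and by zero below), using the $\Z$-grading of \eqref{eq::30} to verify that $\widehat\varphi$ is genuinely $\g$-locally finite. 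By hypothesis $\widehat\varphi$ is realised by some $X\in U(\widetilde\g)$; expanding $X$ as in \eqref{eq::322} and applying Lemma~\ref{lem::CMlem56} in the typical situation lets one isolate the term of top $\Lambda$-bidegree, which descends to an element of $U(\g)$ realising $\varphi$.

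For the reverse direction (b)$\Rightarrow$(a), assume $\la$ typical and $M(\la,\zeta)$ Kostant positive. First use $\Ann_{U(\widetilde\g)}\widetilde{M}(\la,\zeta)=\Ann_{U(\widetilde\g)}\widetilde{M}(\la)$ from \cite[Proposition~26]{Ch21} to identify both quotients $U(\widetilde\g)/\Ann_{U(\widetilde\g)}\widetilde{M}(\la,\zeta)$ and $U(\g)/\Ann_{U(\g)}M(\la,\zeta)$ with their Verma-module counterparts. Typicality then allows one to invoke Gorelik's splitting element for the typical block (compare \cite[Proposition~9.4]{Go}), producing an identification of $\widetilde\g$-bimodules
\[
\mc L(\widetilde{M}(\la,\zeta),\widetilde{M}(\la,\zeta))
\;\cong\;\Lambda(\widetilde\g_{-1})\otimes\mc L(M(\la,\zeta),M(\la,\zeta))\otimes\Lambda(\widetilde\g_{1})
\]
(or more precisely, a filtration whose associated graded is of this form), coming from the tensor decomposition $\Res K(V)=\Lambda(\widetilde\g_{-1})\otimes V$. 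The Kostant positivity of $M(\la,\zeta)$ makes the middle factor equal to $U(\g)/\Ann_{U(\g)}M(\la,\zeta)$, and the outer factors $\Lambda(\widetilde\g_{\pm 1})$ are exactly the contribution needed to build elements of $U(\widetilde\g)$ acting on $K(M(\la,\zeta))$, matching $U(\widetilde\g)/\Ann_{U(\widetilde\g)}\widetilde{M}(\la,\zeta)$. Alternatively, one can run the $\widetilde\Gamma_\zeta$/$\widetilde T_{\nu,\zeta}$ machinery from subsections~\ref{Sect::314}--\ref{sect::313} in parallel with the argument of Theorem~\ref{mainth3}, reducing to Gorelik's Verma-module result.

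The main obstacle is the backward direction, and specifically the bimodule identification above: one has to show that the typicality hypothesis really does let the ``super factors'' $\Lambda(\widetilde\g_{\pm 1})$ be absorbed by the $U(\widetilde\g)$-action, so that the entire $\widetilde\g$-bimodule $\mc L(\widetilde{M}(\la,\zeta),\widetilde{M}(\la,\zeta))$ is hit by the representation map from $U(\widetilde\g)$. This is where Lemma~\ref{lem::CMlem56} together with the non-vanishing $\chi_\la(\Omega)\neq 0$ for typical $\la$ is indispensable; for atypical $\la$ the same element $\Omega$ becomes zero on $V$, which is exactly the mechanism by which Proposition~\ref{prop::14} produces Kostant negativity.
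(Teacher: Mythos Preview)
Your proposal is correct and follows essentially the same route as the paper: reduce to the identification $\widetilde M(\la,\zeta)\cong K(M(\la,\zeta))$ and prove the general Kac-functor statement (the paper's Theorem~\ref{lem::16}) that $K(V)$ is Kostant positive iff $V$ is Kostant positive and $\la$ is typical, using Proposition~\ref{prop::14} and Lemma~\ref{lem::CMlem56} in both directions; the ``in case'' clause then follows from Theorem~A exactly as you say.

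One remark on the direction $(b)\Rightarrow(a)$: you bring in the annihilator identity $\Ann_{U(\widetilde\g)}\widetilde M(\la,\zeta)=\Ann_{U(\widetilde\g)}\widetilde M(\la)$ and Gorelik's splitting machinery, and you hedge that the bimodule identification might only hold on an associated graded. None of this is needed. The paper's argument is entirely elementary: since $\Res K(V)=\Lambda(\widetilde\g_{-1})\otimes V$ one has a genuine vector-space isomorphism $\mc L(K(V),K(V))\cong \Lambda(\widetilde\g_{-1})\otimes \mc L(V,V)\otimes \Lambda(\widetilde\g_{-1})^\ast$ (note: $\Lambda(\widetilde\g_{-1})^\ast$, not $\Lambda(\widetilde\g_{1})$, though the two happen to coincide as $\g$-modules here). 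Then one chooses a basis $\{y_0,\dots,y_\ell\}$ of $\Lambda(\widetilde\g_{-1})$ with $y_iy_{\ell-i}=X^-$, and checks directly that $\chi_\la(\Omega)^{-1}\,x\,r\,X^+y_{\ell-i}$ acts as $x\otimes f\otimes y_i^\ast$ whenever $f$ is realised by $r\in U(\g)$. This hits every basis vector of $\mc L(K(V),K(V))$ by an element of $U(\widetilde\g)$, so the representation map is surjective. Your alternative via $\widetilde\Gamma_\zeta$/$\widetilde T_{\nu,\zeta}$ and reduction to Gorelik's Verma result would also work, but only under the integrality hypothesis, so it does not give the full equivalence $(a)\Leftrightarrow(b)$.
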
 

The proof of Theorem \ref{thm::13} will be explained later. We proceed to formulate several consequences. 
In the extreme case when $\zeta=0$, the standard Whittaker module $\widetilde{M}(\la,0)$ coincides with the Verma module $\widetilde{M}(\la)$.  The following corollary gives a complete answer to Kostant's problem for Verma modules in terms of the typicality of their highest weights. The sufficient condition has already been established by Gorelik in \cite[Proposition 9.4]{Go} for all basic classical Lie superalgebras.
\begin{cor} \label{cor::14}
	Let $\la\in \h^\ast$.  The Kostant's problem has a positive answer for the Verma module $\widetilde M(\la)$ if and only if $\la$ is  typical. 
\end{cor}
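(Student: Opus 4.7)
The plan is to deduce this corollary directly from the second assertion of Theorem \ref{thm::13} by specializing to the trivial character $\zeta = 0$. Since $\Pi_\zeta = \emptyset$ when $\zeta = 0$, the integrality hypothesis $\langle \la, \alpha^\vee\rangle \in \Z$ for all $\alpha \in \Pi_\zeta$ is vacuously satisfied for every $\la \in \h^\ast$, so the hypothesis of the second part of Theorem \ref{thm::13} places no restriction on $\la$.

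Next, I would observe that when $\zeta = 0$ the standard Whittaker module $\widetilde{M}(\la, 0)$ coincides with the Verma module $\widetilde{M}(\la)$ over $\widetilde \g$, by definition. Hence the second part of Theorem \ref{thm::13} asserts that $\widetilde{M}(\la)$ is Kostant positive if and only if $\la$ is typical.

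Since this amounts to a routine unpacking of the statement of Theorem \ref{thm::13}, no genuine obstacle remains; the substantive work has already been carried out in that theorem, which in turn rests on the classical positivity of Kostant's problem for Verma modules over reductive Lie algebras \cite[Corollary 6.4]{Jo} together with the Kac functor reduction of Proposition \ref{prop::14}. I would conclude by remarking that this recovers Gorelik's result \cite[Proposition 9.4]{Go} in the type I cases $\gl(m|n)$ and $\mf{osp}(2|2n)$, and in fact strengthens it to an equivalence rather than a sufficient condition.
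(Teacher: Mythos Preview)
Your proposal is correct and takes essentially the same approach as the paper: both specialize Theorem~\ref{thm::13} to the case $\zeta=0$ and observe that the relevant hypotheses become vacuous. The only cosmetic difference is that the paper invokes the first equivalence $(a)\Leftrightarrow(b)$ of Theorem~\ref{thm::13} together with the classical Kostant positivity of $M(\la)$, whereas you invoke the second assertion of Theorem~\ref{thm::13} (which already packages that positivity via Theorem~A); the content is identical.
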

\begin{proof} In case that $\zeta=0$, the   subgroup  $W_\zeta$ is trivial. It follows that every weight  in $\h^\ast$ is $W_\zeta$-anti-dominant. Since the Verma module  $M(\la)$ over $\g$ is always Kostant positive,  the conclusion follows by   Theorem \ref{thm::13}. 
\end{proof}

The analogue of the statement in Corollary \ref{cor::14} remains true for standard Whittaker modules $\widetilde{M}(\la,\zeta)$ with integral weights $\la$. 
\begin{cor} \label{cor::15}
	Let $\la\in \h^\ast$ be an integral weight. Then, for any $\zeta\in \ch \mf n_+$, the Kostant's problem has a positive answer for the standard Whittaker module $\widetilde M(\la,\zeta)$ if and only if $\la$ is  typical.  
\end{cor}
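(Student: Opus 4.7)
The plan is to deduce this as an immediate corollary of the second assertion of Theorem \ref{thm::13}. First I would note that integrality of $\la$ means $\langle \la, \alpha^\vee \rangle \in \Z$ for every $\alpha \in \Pi$; since $\Pi_\zeta \subseteq \Pi$, this in particular forces $\langle \la, \alpha^\vee \rangle \in \Z$ for every $\alpha \in \Pi_\zeta$, which is exactly the integrality hypothesis appearing in the second clause of Theorem \ref{thm::13}.

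Applying that clause directly yields the desired equivalence: $\widetilde{M}(\la,\zeta)$ is Kostant positive if and only if $\la$ is typical. Thus the entire content of the corollary is essentially a one-line reduction to Theorem \ref{thm::13}, with no additional computation required.

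The only conceivable subtle point would be to confirm that the machinery behind Theorem \ref{thm::13} (which ultimately rests on Theorem A and Theorem \ref{mainth3}, both formulated for $W_\zeta$-anti-dominant $\la$) can be applied to an arbitrary integral $\la$. This is already built into Theorem \ref{thm::13}, but if desired one may instead pass to the unique $W_\zeta$-anti-dominant representative $\la' \in W_\zeta\cdot\la$ via \eqref{eq::19}, and check that both properties used in the argument are preserved: integrality, because $W_\zeta \le W$ acts through the dot-action and $\rho$ is integral against simple coroots, and typicality, because $W_\zeta$ preserves the set of odd roots and hence stabilises the defining non-vanishing conditions $(\la+\rho,\alpha) \ne 0$. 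There is no genuine obstacle here; this is a short corollary of work already completed in the preceding theorem.
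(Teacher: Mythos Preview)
Your proposal is correct and takes essentially the same approach as the paper. The paper's proof explicitly passes to a $W_\zeta$-anti-dominant representative via \eqref{eq::19} and then cites Theorems A and \ref{thm::13}, whereas you invoke the second clause of Theorem \ref{thm::13} directly (which already packages Theorem A and is stated for arbitrary $\la$); your closing remark about preservation of integrality and typicality under the $W_\zeta$ dot-action is precisely the check that justifies the paper's ``without loss of generality'' reduction.
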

\begin{proof} Without loss of generality, we may choose $\la$ to be $W_\zeta$-anti-dominant by  \eqref{eq::19}. The conclusion is a consequence of Theorems A and \ref{thm::13}.
	%Let $\nu$ be an dominant weight such that $W_\nu=W_\zeta$ and $\la\in \nu+\Lambda$.  Theorem \ref{mainthm1} ensures that The Kostant's problem has a positive answer for   $M_0(\la,\zeta)$. The conclusion follows from  Theorem \ref{mainthm2}. 
\end{proof}

\subsubsection{} We are going to prove  Theorem \ref{thm::13} by establishing the following general result.
%We divide the proof of Theorem \ref{thm::13} into a few lemmas below. % module $V\g\mod_{Z(\g)}$ is said to admit a central character $\chi: Z(\g) \rightarrow\C$ provided that $\chi(z)v =zv,$ for any $z\in Z(\g)$ and $v\in V$.

\begin{thm} \label{lem::16} Assume that $\widetilde{\g}=\gl(m|n)$ or $\mf{osp}(2|2n)$.   Let $V$ be a  $\g$-module admitting a central character $\chi_\la: Z(\g)\rightarrow \C$ {\em (}i.e., $zv =\chi_\la(z)v,$ for any $z\in Z(\g)$ and $v\in V${\em )}, for some $\la\in \h^\ast$. Then the following are equivalent: 
	\begin{itemize}
		\item[(1)] $K(V)$ is  Kostant positive.
		\item[(2)] $V$ is   Kostant positive,  and $\la$ is typical.
	\end{itemize}	%Let $\zeta\in \ch\mf n_+$ and $\la\in \h^\ast$.  Suppose that $\widetilde M(\la,\zeta)$ is Kostant positive.  Then  $M(\la,\zeta)$ is also Kostant positive,  and $\la$ is typical. 
\end{thm}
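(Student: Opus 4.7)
The plan is to establish the two implications separately. Both rest on the $\Z$-grading $K(V) = \bigoplus_{i=0}^{d} \Lambda^i(\widetilde{\g}_{-1}) \otimes V$ from \eqref{eq::30} (where $d := \dim \widetilde{\g}_{-1}$), on the identification of $V$ with the top $D^{\widetilde{\g}}$-eigencomponent $\Lambda^0(\widetilde{\g}_{-1}) \otimes V \subseteq K(V)$, and on Lemma \ref{lem::CMlem56}, which expresses $X_d^+ X^-$ in terms of a central element $\Omega \in Z(\g)$ with $\chi_\la(\Omega) \neq 0$ precisely when $\la$ is typical.

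For $(1) \Rightarrow (2)$, typicality of $\la$ is immediate from Proposition \ref{prop::14}. To deduce Kostant positivity of $V$, I would take any $\psi \in \mc L(V, V)$ and extend it to $\widehat\psi \in \mc L(K(V), K(V))$ using the recipe in \eqref{eq::277}: $\widehat\psi$ acts as $\psi$ on the bottom-degree piece $\C X^- \otimes V \cong V$ and vanishes on all other graded pieces. A direct check, analogous to the one implicit in the proof of Proposition \ref{prop::14}, shows that $\widehat\psi$ is $\ad$-locally finite and semisimple under $\g$ (inheriting these properties from $\psi$ up to a twist by the weight of $X^-$ on $\Lambda^{\max}(\widetilde{\g}_{-1})$). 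By hypothesis $\widehat\psi = \phi(X)$ for some $X = \sum_{i,j} r_{ij} X_i^+ X_j^-$ written in PBW form as in \eqref{eq::322}. Evaluating $X$ on $X^- \otimes v$, only the summand with $j=0$ and $i=d$ survives, yielding $r_{d,0} X_d^+ X^- \cdot v = \psi(v)$; Lemma \ref{lem::CMlem56} together with $\chi_\la(\Omega) \neq 0$ identifies this with $\chi_\la(\Omega)\, r_{d,0} \cdot v$. Hence $\chi_\la(\Omega)^{-1} r_{d,0} \in U(\g)$ realizes $\psi$ on $V$, proving that $V$ is Kostant positive.

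For $(2) \Rightarrow (1)$, assume $V$ is Kostant positive and $\la$ is typical. Since $\Lambda(\widetilde{\g}_{-1})$ is finite-dimensional and $\Res K(V) \cong \Lambda(\widetilde{\g}_{-1}) \otimes V$, there is an identification of $\g$-bimodules
\[
\mc L(K(V), K(V)) \;\cong\; \End_{\C}(\Lambda(\widetilde{\g}_{-1})) \otimes \mc L(V, V).
\]
Kostant positivity of $V$ yields $\mc L(V, V) \cong U(\g)/\Ann_{U(\g)}(V)$, and it then suffices to show that the representation map $\phi: U(\widetilde{\g}) \to \mc L(K(V), K(V))$ hits the $\End_{\C}(\Lambda(\widetilde{\g}_{-1}))$-factor. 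Using the PBW decomposition $U(\widetilde{\g}) = \Lambda(\widetilde{\g}_1) \cdot U(\g) \cdot \Lambda(\widetilde{\g}_{-1})$ together with Lemma \ref{lem::CMlem56} (applied to pair up $X_i^+$ with $X_j^-$ of appropriate ranks) and typicality (to invert $\chi_\la(\Omega)$ on $V$), one builds elements $X_i^+\, u\, X_j^- \in U(\widetilde{\g})$ whose images realize an arbitrary endomorphism of $\Lambda(\widetilde{\g}_{-1})$ tensored with an arbitrary endomorphism of $V$ coming from $U(\g)$. Combining this with the surjectivity of $U(\g) \to \mc L(V, V)$ produces Kostant positivity of $K(V)$.

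The main obstacle is in the second direction: while the $\g$-bimodule decomposition of $\mc L(K(V), K(V))$ is straightforward, verifying that the $\widetilde{\g}$-bimodule structure is compatible with exhibiting the factor $\End_{\C}(\Lambda(\widetilde{\g}_{-1}))$ as an image of $\phi$ requires careful bookkeeping of how $\Lambda(\widetilde{\g}_{\pm 1}) \subset U(\widetilde{\g})$ acts, with typicality entering essentially via the invertibility of $\chi_\la(\Omega)$. Without this invertibility — i.e., for non-typical $\la$ — the mixed products $X_i^+ X_j^-$ have vanishing diagonal contribution on $V$, which is precisely the obstruction captured by Proposition \ref{prop::14}.
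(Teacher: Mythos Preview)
Your proposal is correct and follows essentially the same route as the paper's proof: the extension $\widehat\psi$ via \eqref{eq::277} and the degree argument for $(1)\Rightarrow(2)$, and the identification $\mc L(K(V),K(V))\cong \Lambda(\widetilde\g_{-1})\otimes\mc L(V,V)\otimes\Lambda(\widetilde\g_{-1})^*$ together with Lemma~\ref{lem::CMlem56} for $(2)\Rightarrow(1)$. The ``careful bookkeeping'' you anticipate is dispatched in the paper by choosing a basis $\{y_0,\dots,y_\ell\}$ of $\Lambda(\widetilde\g_{-1})$ satisfying $y_iy_{\ell-i}=X^-$ and $y_iy_j=0$ otherwise, so that for $f\in\mc L(V,V)$ realized by $r\in U(\g)$ one has $\chi_\la(\Omega)^{-1}\phi(x\,r\,X^+\,y_{\ell-i})=x\otimes f\otimes y_i^*$, which exhibits surjectivity of $\phi$ directly.
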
 
\begin{proof} %{\color{cyan} Define $d:=\dim \widetilde{\g}_{-1}$.  	 We fix a non-zero vector $X^-\in \Lambda^{\text{max}}(\widetilde \g_{-1})$. For each $f\in \mc  L(M(\la,\zeta), M(\la,\zeta))$, we define an associated linear map  $\hat f: \widetilde M(\la,\zeta)\rightarrow \widetilde M(\la,\zeta)$ by declaring, for any $y\in \Lambda^\ell (\widetilde \g_{-1})$ and  $m\in M(\la,\zeta)$, 	\begin{align} &\hat f(y\otimes m) = \begin{cases}f(m), &\mbox{ if~$y=X^-$;}\\ 		0,&\mbox{if $\ell <d$}.	\end{cases}\label{eq::27}\end{align} Let $f:=\text{Id}$ be the identity map on $M(\la,\zeta)$.   By a direct calculation, $\C \widehat{\text{Id}}$ is a  $\g$-submodule of $\Hom_\C(\widetilde M(\la,\zeta), \widetilde M(\la,\zeta))^{\text{ad}}$, and so $\widehat{\text{Id}}\in \mc L(\widetilde M(\la,\zeta), \widetilde M(\la,\zeta)).$ By assumption, there exists $X\in U(\widetilde\g)$ such that $X(y\otimes m) = \widehat{\text{Id}}(y\otimes m)$, for any $y\in \Lambda^\ell(\widetilde \g_{-1})$ and  $m\in M(\la,\zeta)$. %if we set $y$ above  to be a non-zero vector $X^-\in \Lambda^{\text{max}}(\widetilde \g_{-1})$,  This implies that   \begin{align} 		&XX^-\otimes m = \widehat{\text{Id}}(X^-\otimes m) = \text{Id}(m)=m, \label{eq::31} 	\end{align}for any $m\in M(\la,\zeta)$. 
		
		%Recall that $\widetilde M(\la,\zeta)$ is a $\Z$-graded module with respect to the grading operator $D^{\widetilde{\g}}$ from \eqref{eq::graop} and $M(\la,\zeta)$ is the homogeneous component with the biggest eigenvalue. We write \begin{align}	&X = \sum_{\ell=1}^{d}  r_\ell X^+_\ell X^-_\ell, \label{eq::32} 	\end{align} so that  $X_\ell^\pm\in \Lambda^\ell(\widetilde \g_{\pm 1})$ and $r_\ell\in U(\g)$ for $1\leq \ell \leq d$. Then it follows by  \eqref{eq::31}  that $$r_{d} X^+_{d}X^-m=m,$$ for any $m\in M(\la,\zeta)$. By Lemma \ref{lem::CMlem56}, there are $\Omega\in Z(\g)$ and $x_i^{\pm}\in \widetilde\g_{\pm 1}\Lambda(\widetilde\g_{\pm 1})$ such that  $X^+_dX^-= \Omega +\sum_{i} x_i^-r_ix_i^+$. It follows that  $m = r_{d} \Omega m =r_{d} \chi_\la(\Omega)m$, for any $m\in M(\la,\zeta)$. This implies that $\la$ is typical. }

		%{\color{cyan} 
		We  first prove the implication $(1)\Rightarrow (2)$. Suppose that $K(V)$ is Kostant positive.   By Proposition \ref{prop::14}, $\la$ is  a typical weight. 
		Let $f\in \mc  L(V, V)$. Recall that we defined an associated linear map $\hat f:   K(V)\rightarrow K(V)$ from \eqref{eq::277}. 		By a direct calculation, we find that the dimension of the cyclic $\widetilde{\g}$-submodule $U(\widetilde \g)\cdot \hat f\subset \Hom_\C(K(V),K(V))^\ad$ generated by $\hat f$ is at most $\dim U(\g)\cdot f$, under the adjoint action of $\g$. %Note that $\Hom_\C(M,M)^\ad$ is semisimple over $Z(\g)$, for any Whittaker module $M$ over $\widetilde{\g}$. 
	 This implies that $\hat f \in \mc L(K(V),K(V))$. By assumption, there exists $X\in U(\widetilde\g)$, depending on $f$, such that  $\hat f(m) = X(m)$, for any $m\in K(V)$. Again, if we write $X$ as the expression in \eqref{eq::322} with the fixed non-zero vector $X^-\in \Lambda^{\max}(\widetilde \g_{-1})$, then we get $r_{d}X^+_{d}X^-v = f(v)$ (here $d=\dim \widetilde \g_{-1}$), for any $v \in V$. We obtain that $$f(v)  =r_{d} \chi_\la(\Omega)v,$$ for any $v\in V$. Consequently, $f$ is the image of $r_d\chi_\la(\Omega)$ under the representation map $U(\g)\rightarrow \mc L(V, V)$. This proves that $V$ is Kostant positive. 
	
	Next, we prove the implication   $(2)\Rightarrow (1)$. Suppose that $\la$ is typical and $V$ is Kostant positive. 
  There is the following  canonical isomorphism 
	\[\psi: \Lambda(\widetilde \g_{-1}) \otimes \mc L(V, V) \otimes \Lambda(\widetilde \g_{-1})^\ast \cong \mc L(K(V), K(V)).\]
	Let $\phi: U(\widetilde \g)\rightarrow \mc L(K(V),K(V))$ be the natural representation map.  We note that there is a basis  $\{y_0,\ldots, y_\ell\}$  for $\Lambda(\widetilde \g_{-1})$  such that $$y_{i} y_j = \begin{cases}X^-, &\mbox{ for~$j=\ell-i$;}\\
		0,&\mbox{otherwise;}
	\end{cases}$$ for any $0\leq i\leq \ell$.
	
	Let $\{y_0^\ast,\ldots, y_\ell^\ast\} \subset \Lambda(\widetilde \g_{-1})^\ast$ be the dual basis with respect to $\{y_0,\ldots, y_\ell\}$. Let $f\in  \mc L(V, V)$.   By assumption, there is $r\in U(\g)$ such that $f(v)=rv,$ for any $v\in V$.  Set $X^+\in \Lambda^{\max}(\widetilde \g_1)$ to be a non-zero vector. By Lemma \ref{lem::CMlem56}, we have  $X^+X^-= \Omega +\sum_{i} x_i^-r_ix_i^+,$
	for some $\Omega\in Z(\g)$, $r_i\in U(\g)$ and $x_i^{\pm}\in \widetilde\g_{\pm 1}\Lambda(\widetilde\g_{\pm 1})$. We note that   
	$\chi_\la(\Omega)^{-1} \phi(xrX^+ y_{\ell-i})=x\otimes f\otimes y_i^\ast$, for any $0\leq i\leq \ell$ and $x\in \Lambda(\widetilde \g_{-1})$.  Consequently,  $\phi$ is an isomorphism. This completes the proof.
\end{proof}

\begin{proof}[Proof of Theorem \ref{thm::13}]
The fact that  $M(\la,\zeta)\in \mc O_{\chi_\la}$ follows from \cite[Lemma 2.2]{MS}. Therefore, the conclusion follows by Theorem A and  Theorem \ref{lem::16}.
\end{proof}

\subsection{Kostant's problem for  Whittaker modules over  $\pn$} \label{sect::33}

In this subsection, we study Kostant's problem for standard and simple Whittaker modules over $\pn$.  For each simple root $\alpha\in \Pi$, we set $s_\alpha\in W$ to be the simple reflection associated to $\alpha$.

% \subsubsection{Kostant's problem for standard Whittaker modules over $\pn$}
\begin{lem}\label{lem::18} Suppose that $\alpha\in \Pi$ such that either $\langle \la+\rho, \alpha^\vee\rangle \in \Z_{<0}$ or $\langle \la+\rho, \alpha^\vee\rangle \notin \Z$. If the Kostant's problem for $\widetilde M(\la)$ has an positive answer, then so does $\widetilde M(s_\alpha\cdot \la)$. 
\end{lem}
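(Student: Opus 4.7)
The plan is to apply Arkhipov's twisting functor $T_\alpha:\widetilde{\mc O}\to\widetilde{\mc O}$ associated to the simple reflection $s_\alpha\in W$, in the spirit of the approach introduced by Mazorchuk in \cite{Ma05}. Since $\alpha\in\Pi$ is an even simple root of $\g=\widetilde{\g}_\oa$, the functor $T_\alpha$ is defined in the standard way via Ore localization of $U(\widetilde{\g})$ at the even root vector $f_\alpha\in\g^{-\alpha}$, twisted by $s_\alpha$.

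First I would verify the isomorphism $T_\alpha\widetilde{M}(\la)\cong\widetilde{M}(s_\alpha\cdot\la)$. The hypothesis gives $\langle\la+\rho,\alpha^\vee\rangle\notin\Z_{\geq 0}$ (the excluded case $\langle\la+\rho,\alpha^\vee\rangle=0$ being trivial as $s_\alpha\cdot\la=\la$), which is exactly the condition for $f_\alpha$ to act injectively on the $\g$-Verma module $M(\la)$. Since $\Res\widetilde{M}(\la)\cong\Lambda(\widetilde{\g}_{-1})\otimes M(\la)$ as $U(\mf n_-)$-modules via the PBW decomposition, $f_\alpha$ acts injectively on $\widetilde{M}(\la)$ as well, and the identification with $\widetilde{M}(s_\alpha\cdot\la)$ then follows from the usual computation of the top of the localized module, analogous to \cite{AS}.

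Next I would transfer Kostant positivity through $T_\alpha$. The two key inputs are: $(a)$ the equality $\Ann_{U(\widetilde{\g})}\widetilde{M}(\la)=\Ann_{U(\widetilde{\g})}\widetilde{M}(s_\alpha\cdot\la)$, coming from the compatibility of $T_\alpha$ with the adjoint action of $\widetilde{\g}$; and $(b)$ an isomorphism $\Hom_{\widetilde{\g}}(\widetilde{M}(\la)\otimes E,\widetilde{M}(\la))\cong\Hom_{\widetilde{\g}}(\widetilde{M}(s_\alpha\cdot\la)\otimes E,\widetilde{M}(s_\alpha\cdot\la))$ for every $E\in\widetilde{\mc F}$, obtained by applying $T_\alpha$ and using its commutativity with tensoring by finite-dimensional weight modules, $T_\alpha(M\otimes E)\cong T_\alpha(M)\otimes E$. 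Combining $(a)$ and $(b)$ with the adjunction \eqref{eq::Jo68iso} translates Kostant positivity into an equality of dimensions that transfers directly from $\widetilde{M}(\la)$ to $\widetilde{M}(s_\alpha\cdot\la)$.

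The main obstacle is making the twisting-functor machinery rigorous in the super setting, and in particular establishing that $T_\alpha$ genuinely preserves annihilators and induces a \emph{bijection} (not merely an embedding, as would follow from the analogue of Lemma \ref{lem::5}) on the $\Hom$-spaces above. The cleanest route is to reduce to the Lie algebra case via the Kac induction functor $K$: using $\widetilde{M}(\la)\cong K(M(\la))$ and the commutativity $T_\alpha K\cong K T_\alpha$ (which holds because $T_\alpha$ involves only the even part $\g\subset\widetilde{\g}$ through $f_\alpha$), all the required properties of the super twisting functor descend from the corresponding, well-established statements for the even Arkhipov twist applied to $M(\la)$.
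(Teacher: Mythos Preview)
Your approach works for the non-integral case $\langle\la+\rho,\alpha^\vee\rangle\notin\Z$, and there it coincides with the paper's argument: the twisting functor $T_{s_\alpha}$ is an equivalence between $\widetilde{\mc O}_{\la+\Lambda}$ and $\widetilde{\mc O}_{s_\alpha\la+\Lambda}$, so both the annihilator equality and the $\Hom$-space bijection are immediate.

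The gap is in the integral case $\langle\la+\rho,\alpha^\vee\rangle\in\Z_{<0}$. Your first step, the isomorphism $T_\alpha\widetilde M(\la)\cong\widetilde M(s_\alpha\cdot\la)$, is false here. The formula $T_\alpha M(\mu)\cong M(s_\alpha\cdot\mu)$ holds only when $s_\alpha\cdot\mu\leq\mu$; under your hypothesis one has $s_\alpha\cdot\la>\la$, and then $T_\alpha M(\la)$ is a \emph{twisted} Verma module (for $\mf{sl}_2$ it is the dual Verma $\nabla(s_\alpha\cdot\la)$), not $M(s_\alpha\cdot\la)$. Your justification, that $f_\alpha$ acts injectively on $M(\la)$, does not help: $f_\alpha$ acts injectively on \emph{every} Verma module, so this condition cannot distinguish the two regimes. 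Consequently step~(b) collapses, and the reduction to the even case via $K$ does not rescue it, since the same obstruction already occurs for $\g$. (In the Lie algebra setting the equality of $\Hom$-dimensions is known, but only because all $M(\mu)$ are Kostant positive --- which is exactly the super statement you are trying to prove.)

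The paper bypasses this by treating the integral case with a completely different, elementary argument: it uses the embedding $\widetilde M(\la)\hookrightarrow\widetilde M(s_\alpha\cdot\la)$ with $\alpha$-finite cokernel $C(\la)$, applies $\mc L(-,-)$ to the short exact sequence, and deduces from the vanishing $\mc L(C(\la),\widetilde M(s_\alpha\cdot\la))=\mc L(\widetilde M(\la),C(\la))=0$ an injection $\mc L(\widetilde M(s_\alpha\cdot\la),\widetilde M(s_\alpha\cdot\la))\hookrightarrow\mc L(\widetilde M(\la),\widetilde M(\la))$. Combined with $\Ann_{U(\widetilde\g)}\widetilde M(\la)=\Ann_{U(\widetilde\g)}\widetilde M(s_\alpha\cdot\la)$, this injection (rather than a bijection) is already enough to transfer Kostant positivity. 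If you want to stay with twisting functors, you could instead apply $T_\alpha$ in the \emph{reverse} direction (from $\widetilde M(s_\alpha\cdot\la)$, where $\langle s_\alpha\cdot\la+\rho,\alpha^\vee\rangle\in\Z_{>0}$, so that $T_\alpha\widetilde M(s_\alpha\cdot\la)\cong\widetilde M(\la)$ \emph{does} hold) and then argue injectivity of the induced map on $\Hom$-spaces; but this still requires an argument beyond what you wrote.
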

\begin{proof} %Let   $\widetilde M(\la)$ be  Kostant positive.  
	First, we suppose that $\langle \la+\rho,\alpha^\vee\rangle \in \mathbb Z_{<0}$. Let $C(\la):=\widetilde M(s_\alpha\cdot \la)/\widetilde M(\la)$.  Note that $C(\la)\cong K(M(s_\alpha\cdot \la)/M(\la))$ is $\alpha$-finite. Applying the bi-functor $\mc L(-,-)$ to the short exact sequence
	\begin{align*}
		&0\rightarrow \widetilde M(\la)\rightarrow \widetilde M(s_\alpha\cdot \la)\rightarrow C(\la)\rightarrow 0,
	\end{align*} we obtain the following commutative diagram with exact rows and columns: 
	\begin{align*}
		&\xymatrixcolsep{2pc} \xymatrix{
			&0\ar[d]& 0 \ar[d]  & 0\ar[d]   \\
			0\ar[r]&\mc L(C(\la),\widetilde M(\la))\ar[d]\ar[r]& \mc L(C(\la),\widetilde M(s_\alpha\cdot \la))\ar[d]\ar[r] & \mc L(C(\la),C(\la)) \ar[d]  \\
			0 \ar[r] & \mc L(\widetilde M(s_\alpha\cdot \la),\widetilde M(\la)) \ar[r]   \ar@<-2pt>[d]  & \mc L(\widetilde M(s_\alpha\cdot \la),\widetilde M(s_\alpha\cdot \la)) \ar[r]  \ar@<-2pt>[d]  & \mc L(\widetilde M(s_\alpha\cdot \la), C(\la)) \ar[d]\\ 0 \ar[r] &
			\mc L(\widetilde M(\la),\widetilde M(\la)) \ar[r]    &   \mc L(\widetilde M(\la),\widetilde M(s_\alpha\cdot \la)) \ar[r] & \mc L(\widetilde M(\la),C(\la)) } \label{eq::inLem18}
	\end{align*}   Since  both the top of $\widetilde M(\la)$ and the socle of $\widetilde M(s_\alpha\cdot \la)$ are $\alpha$-free, we conclude that 
	\[\Hom_{\widetilde{\g}}(\widetilde M(\la), E\otimes C(\la)) =0= \Hom_{\widetilde\g}(C(\la)\otimes E, \widetilde M(s_\alpha\cdot\la)),\]
	for any $E\in \widetilde{\mc F}$. Therefore, we  get that $\mc L(C(\la),\widetilde M(s_\alpha\cdot \la))=\mc L(\widetilde M(\la),C(\la))=0$ and so $\mc L(\widetilde M(s_\alpha\cdot\la),\widetilde M(s_\alpha\cdot\la)) \hookrightarrow L(\widetilde M(\la),\widetilde M(\la))$. We   observe that $\Ann_{U(\widetilde \g)}(\widetilde M(\la)) = \Ann_{U(\widetilde \g)}(\widetilde M(s_\alpha\cdot \la))$ (see, e.g., \cite[Lemma 2.2]{MS}). Consequently, we have the following inequalities 
	\begin{align*}
		&\dim\Hom_{\widetilde\g}(E, (U(\widetilde \g)/\Ann_{U(\widetilde \g)}(\widetilde M(s_\alpha\cdot \la)))^\ad) \\
		%&= \dim \Hom_\g(E, \mc L(M(\la),M(\la))\\
		&= \dim \Hom_{\widetilde\g}(E, \mc L(\widetilde M(\la),\widetilde M(\la))^\ad) \\
		&\geq \dim \Hom_{\widetilde\g}(E, \mc L(\widetilde M(s_\alpha\cdot\la),\widetilde M(s_\alpha\cdot\la))^\ad)\\
		&\geq \dim\Hom_{\widetilde\g}(E, (U(\widetilde \g)/\Ann_{U(\widetilde \g)}(\widetilde M(s_\alpha\cdot \la)))^\ad),
	\end{align*} for any projective-injective module $E$  in $\widetilde{\mc F}.$ This completes the proof for the first assertion. 
	
	Next, assume that $\langle \la+\rho,\alpha^\vee \rangle\not\in \Z$. Let $T_{s_\alpha}(-):\widetilde{\mc O}\rightarrow \widetilde{\mc O}$ denote the   
	Arkhipov's twisting functor from \cite[Section 3.6]{CMW} and \cite[Section 5]{CoM1}. Let $\widetilde{\mc O}_{\la+\Lambda}$ and $\widetilde{\mc O}_{s_{\alpha}\la+\Lambda}$ denote the full subcategory of $\widetilde{\mc O}$ consisting of all modules having weight supports contained in $\la+\Lambda$ and $s_\alpha\cdot\la+\Lambda$, respectively. In this case, $T_{s_\alpha}$ restricts to an equivalence between $\widetilde{\mc O}_{\la+\Lambda}$ and $\widetilde{\mc O}_{s_{\alpha} \la+\Lambda}$, sending $\widetilde M(\la)$ to $\widetilde M(s_\alpha\cdot\la)$. Since $T_{s_\alpha}$ commutes with tensoring with finite-dimensional weight $\widetilde\g$-modules, we have 
	\begin{align*}
		&\dim \Hom_{\widetilde \g}(E, \mc L(\widetilde M(\la),\widetilde M(\la))) = \dim \Hom_{\widetilde\g}(E, \mc L(\widetilde M(s_\alpha\cdot\la),\widetilde M(s_\alpha\cdot\la))),
	\end{align*} for any $E\in \widetilde{\mc F}$. Consequently, the answers to Kostant's problem for $\widetilde M(\la)$ and $\widetilde M(s_\alpha\cdot \la)$ are the same. This completes the proof.
\end{proof}

Now we are ready to prove the following result.
\begin{thm} \label{mainthm3}
	Let $\widetilde \g=\pn$ with  $\zeta\in \ch\mf n_+$. %Then the following are equivalent.  		\begin{itemize} \item[(1)] The Kostant's problem has an affirmative answer for $M(\la,\zeta)$.  			\item[(2)]  The Kostant's problem has an affirmative answer for $M_0(\la,\zeta)$ and $\la$ is typical.  		\end{itemize}
	Suppose that $\langle \la,\alpha^\vee\rangle \in \Z$, for any $\alpha\in \Pi_\zeta$. Then   $\widetilde M(\la,\zeta)$ is Kostant positive if and only if $\la$ is typical.
	%	In particular, if $\la$ is typical and $\langle \la+\rho,\alpha \rangle \in \Z$ for all $\alpha\in \Pi_\zeta$, then the Kostant's problem has an affirmative answer for $M(\la,\zeta)$.
\end{thm}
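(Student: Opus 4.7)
By Theorem \ref{mainth3}(1), the integrality hypothesis $\langle \la, \alpha^\vee\rangle \in \Z$ for all $\alpha \in \Pi_\zeta$ identifies Kostant positivity of $\widetilde M(\la,\zeta)$ with that of the Verma module $\widetilde M(\la) = K(M(\la))$, so the statement reduces to the equivalence that $\widetilde M(\la)$ is Kostant positive if and only if $\la$ is typical, for $\widetilde\g = \pn$.

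The sufficiency direction (typical implies Kostant positive) is Serganova's result from \cite{Se02}, which asserts that every typical Verma module over $\pn$ has a positive answer to Kostant's problem.

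For the necessity direction, apply Proposition \ref{prop::16} to $V = M(\la) \in \mc O_{\chi_\la}$: the Kostant positivity of $\widetilde M(\la) = K(M(\la))$ forces $\la$ to be weakly typical. Suppose for contradiction that $\la$ is not typical; then there is $\beta \in \Phi_\oa^+$ with $(\la+\rho, \beta) = -1$, equivalently $\langle \la+\rho, \beta^\vee\rangle = -1 \in \Z_{<0}$. The plan is to propagate Kostant positivity from $\widetilde M(\la)$ to $\widetilde M(s_\beta\cdot\la)$ and then reapply Proposition \ref{prop::16} to $M(s_\beta\cdot\la)$: since $(s_\beta\cdot\la + \rho, \beta) = -(\la+\rho,\beta) = 1$, the weight $s_\beta\cdot\la$ would then have to be weakly typical yet fails weak typicality on the positive root $\beta$, the desired contradiction.

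The main obstacle is establishing the propagation step when $\beta$ is a non-simple positive even root, since Lemma \ref{lem::18} is stated only for simple reflections. One approach is to prove a non-simple-root analogue by rerunning its diagram chase on the short exact sequence $0 \to \widetilde M(\la) \to \widetilde M(s_\beta\cdot\la) \to C \to 0$, obtained by applying the exact Kac functor to the classical even-Verma embedding $M(\la) \hookrightarrow M(s_\beta\cdot\la)$, using the $\beta$-freeness of tops and socles of the Vermas and the equality $\Ann_{U(\widetilde\g)}\widetilde M(\la) = \Ann_{U(\widetilde\g)}\widetilde M(s_\beta\cdot\la)$ from \cite[Proposition 26]{Ch21}. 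Alternatively, one may decompose $s_\beta$ as a product of simple reflections adapted to the integral structure of the orbit of $\la$, and iterate Lemma \ref{lem::18}, verifying the required sign condition at each intermediate step by a Bruhat-order argument.
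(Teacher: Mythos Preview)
Your proposal is correct and follows essentially the same route as the paper. Both arguments reduce to the Verma case $\zeta=0$ via Theorem~\ref{mainth3}(1), invoke Serganova \cite{Se02} for the direction ``typical $\Rightarrow$ Kostant positive,'' and for the converse apply Proposition~\ref{prop::16} to obtain weak typicality and then close the gap between weakly typical and typical by propagating Kostant positivity along the Weyl group orbit via Lemma~\ref{lem::18}. The paper's proof is terser: it simply writes ``The conclusion follows by Lemma~\ref{lem::18}'' and does not isolate the non-simple-root issue you raise.

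Your option (b), iterating Lemma~\ref{lem::18} along a chain of simple reflections, is evidently the intended completion; the point is that the hypothesis of Lemma~\ref{lem::18} excludes only the case $\langle\mu+\rho,\alpha^\vee\rangle\in\Z_{>0}$, so one may always move within $W\cdot\la$ towards dominance (integral step) or freely (non-integral step), eventually reaching a weight in the orbit at which the offending pairing becomes $+1$ with a \emph{positive} root, contradicting Proposition~\ref{prop::16}. Your option (a) is a reasonable shortcut in spirit, but note that the ingredients of the diagram chase in Lemma~\ref{lem::18}, namely that the cokernel is $\alpha$-finite while the top of $\widetilde M(\la)$ and the socle of $\widetilde M(s_\alpha\cdot\la)$ are $\alpha$-free, are standard for simple $\alpha$ but are not obviously available for an arbitrary positive root $\beta$; you would need to supply those verifications separately if you pursue that route.
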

\begin{proof} By Theorem \ref{mainth3},  it suffices to prove the assertion in the case when $\zeta=0$. Assume that $\la$ is typical, then $\widetilde M(\la)$ is Kostant positive by \cite[Proof of Theorem 5.7]{Se02}. Conversely, assume that  $\widetilde M(\la)$ is Kostant positive.  By Proposition \ref{prop::16}, $\la$ is weakly typical.  The conclusion follows by Lemma \ref{lem::18}.  
\end{proof}

\subsection{Proof of Theorem C} 

\begin{cor} \label{cor::einThmC}
	Let $\widetilde \g$ be one of the Lie superalgebras $\gl(m|n)$, $\mf{osp}(2|2n)$ and $\pn$. Suppose that $\la\in \h^\ast$ is a typical and $W_\zeta$-anti-dominant wight such that $\langle \la, \alpha^\vee \rangle \in \Z$, for all $\alpha\in \Pi_\zeta$. Then  the answers to Kostant's problem for $\widetilde L(\la,\zeta)$, $\widetilde L(\la)$ and  $L(\la)$ are the same.
\end{cor}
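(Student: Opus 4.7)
The plan is to chain together three equivalences under the corollary's hypotheses. Since $\la$ is $W_\zeta$-anti-dominant and satisfies $\langle\la,\alpha^\vee\rangle\in\Z$ for every $\alpha\in\Pi_\zeta$, part (2) of Theorem \ref{mainth3} applies directly and yields that $\widetilde L(\la,\zeta)$ is Kostant positive if and only if $\widetilde L(\la)$ is. This disposes of the $\zeta$-dependence and reduces the corollary to showing that $\widetilde L(\la)$ and $L(\la)$ have the same Kostant status.

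Next I would identify $\widetilde L(\la)$ with the Kac module $K(L(\la))$. For $\widetilde\g=\gl(m|n)$ or $\mf{osp}(2|2n)$, typicality of $\la$ makes $K(L(\la))$ simple (see the remark following Proposition \ref{prop::16}), so it coincides with $\widetilde L(\la)$. For $\widetilde\g=\pn$, typicality implies weak typicality, and then \cite[Proposition 4.1]{CP} gives the same identification. With this identification in hand, for $\widetilde\g=\gl(m|n)$ and $\mf{osp}(2|2n)$ the remaining equivalence is immediate: Theorem \ref{lem::16} applied to $V=L(\la)$, which carries the typical central character $\chi_\la$, says exactly that $K(L(\la))$ is Kostant positive if and only if $L(\la)$ is, closing the chain.

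For $\widetilde\g=\pn$ the last step needs a dedicated argument, since Theorem \ref{lem::16} rests on Lemma \ref{lem::CMlem56}, which is proved only for the other two families. I would prove a $\pn$-analog of Theorem \ref{lem::16} along the same two-step scheme used in its proof. The direction \emph{$K(L(\la))$ Kostant positive $\Rightarrow L(\la)$ Kostant positive} proceeds as in the first half of the proof of Theorem \ref{lem::16}: embed $\mc L(L(\la),L(\la))$ into $\mc L(K(L(\la)),K(L(\la)))$ via $f\mapsto\widehat f$ from \eqref{eq::277}, realize $\widehat f$ by some $X\in U(\widetilde\g)$, expand $X$ as in \eqref{eq::322}, and extract the top-degree component to recover $f$ as an element of the image of $U(\g)\to\mc L(L(\la),L(\la))$. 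The converse runs parallel to the second half of that proof, once we have a $\pn$-substitute for Lemma \ref{lem::CMlem56}: a decomposition $X^+X^-=\Omega+\sum_i x_i^- r_i x_i^+$ with $\Omega\in Z(\g)$, $x_i^\pm\in\widetilde\g_{\pm 1}\Lambda(\widetilde\g_{\pm 1})$, and $\chi_\la(\Omega)\neq 0$ for typical $\la$.

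The main obstacle is therefore this $\pn$-analog of Lemma \ref{lem::CMlem56}; the asymmetry $\dim\widetilde\g_{1}>\dim\widetilde\g_{-1}$ prevents a direct transcription of the argument given for $\gl(m|n)$ and $\mf{osp}(2|2n)$. I would fix bases of $\Lambda(\widetilde\g_{\pm 1})$ compatible with the chosen maximal vectors $X^\pm$, absorb the surplus $\widetilde\g_{1}$-factors into the tail $\sum_i x_i^-r_ix_i^+$, and then verify $\chi_\la(\Omega)\neq 0$ precisely when $\la$ is typical by an argument parallel to \cite[Subsection 4.2]{Go}. Modulo this input, all remaining ingredients (the identification $\widetilde L(\la)\cong K(L(\la))$, compatibility of the Kac functor with tensoring with objects of $\widetilde{\mc F}$, and the bridge back to $\widetilde L(\la,\zeta)$ via Theorem \ref{mainth3}) are already supplied by the paper.
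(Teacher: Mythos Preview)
For $\gl(m|n)$ and $\mf{osp}(2|2n)$ your argument matches the paper's up to an inessential reordering: the paper identifies both $\widetilde L(\la)=K(L(\la))$ and $\widetilde L(\la,\zeta)=K(L(\la,\zeta))$ (via \cite[Corollary~6.8]{CM}, which applies because $\Ann_{U(\g)}L(\la)=\Ann_{U(\g)}L(\la,\zeta)$ and $\la$ is typical) and then feeds each into Theorem~\ref{lem::16}, closing the loop with Theorem~A; you instead collapse the $\zeta$-dependence first via Theorem~\ref{mainth3} and apply Theorem~\ref{lem::16} once. Either route works.

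For $\pn$, however, your plan has a genuine obstruction. The decomposition $X^+X^-=\Omega+\sum_i x_i^-r_ix_i^+$ with $\Omega\in Z(\g)$ that you aim for cannot exist with $\Omega\neq 0$. Under the type-I $\Z$-grading (with grading operator $D^{\pn}$ as in \eqref{eq::graop}) the top wedge $X^+\in\Lambda^{\max}(\widetilde\g_1)$ sits in degree $\dim\widetilde\g_1=\binom{n+1}{2}$ while $X^-\in\Lambda^{\max}(\widetilde\g_{-1})$ sits in degree $-\dim\widetilde\g_{-1}=-\binom{n}{2}$, so $X^+X^-$ is homogeneous of degree $n>0$ in $U(\widetilde\g)$. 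Since $Z(\g)\subset U(\g)=U(\widetilde\g)_0$, any such $\Omega$ is forced to vanish. Your idea of ``absorbing the surplus $\widetilde\g_1$-factors into the tail'' cannot cure this: every tail term $x_i^-r_ix_i^+$ is homogeneous of the same nonzero degree $n$, so the degree-$0$ component of the right-hand side is still identically zero. This is exactly why Lemma~\ref{lem::CMlem56} is stated only for $\gl(m|n)$ and $\mf{osp}(2|2n)$, and why Proposition~\ref{prop::16} (the $\pn$ analogue of Proposition~\ref{prop::14}) uses a completely different argument rather than a Kostant element $\Omega$. Consequently neither direction of your proposed $\pn$-analogue of Theorem~\ref{lem::16} goes through as written.

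The paper does not attempt any such $\pn$-version of Lemma~\ref{lem::CMlem56} or of Theorem~\ref{lem::16}. It records the identifications $\widetilde L(\la)=K(L(\la))$ (from weak typicality) and, via the super Backelin functor $\widetilde\Gamma_\zeta$, $\widetilde L(\la,\zeta)=K(L(\la,\zeta))$, notes that $\Ann_{U(\g)}L(\la)=\Ann_{U(\g)}L(\la,\zeta)$, and then appeals to Theorems~\ref{mainth3} and~\ref{mainthm3} rather than to an explicit Kac-module computation.
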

\begin{proof}
	Suppose that $\widetilde \g =\gl(m|n)$ or $\mf{osp}(2|2n)$. From \cite[Corollary 6.8]{CM}, it follows that   $\widetilde L(\la) = K(L(\la))$ and $\widetilde L(\la,\zeta) = K(L(\la,\zeta))$, since $\Ann_{U(\g)}L(\la) = \Ann_{U(\g)} L(\la,\zeta)$ and  $\la$ is typical. The  conclusion follows by Theorem A and Theorem \ref{lem::16}. 	 Suppose that $\widetilde \g = \pn$.  We still have  $\widetilde L(\la) = K(L(\la))$,  since $\la$ is weakly typical. Applying the Backelin functor $\widetilde{\Gamma}_\zeta$ from subsection \ref{Sect::314}, we get that  $\widetilde L(\la,\zeta) = K(L(\la,\zeta))$. Again, $\Ann_{U(\g)}L(\la) = \Ann_{U(\g)} L(\la,\zeta)$. The conclusion follows by Theorems \ref{mainth3} and \ref{mainthm3}. This completes the proof.
\end{proof}

\begin{proof}[{\bf Proof of Theorem C from subsection \ref{sect::144}}]
The  conclusions in Part (1) follows by Theorems \ref{thm::13} and \ref{lem::16}. It remains to prove the assertions (c), (d) and (e) in  Part  (2). %We first prove the assertions in (c) and (d) in Part (2). 
In the case that  $\widetilde \g=\gl(m|n)$ or $\mf{osp}(2|2n)$, the conclusion (c) in Part (2) follows by Theorem A and Theorem \ref{thm::13}. In the case when $\widetilde \g=\pn$, the conclusion (c) in Part (2) is the same as the statement in Theorem \ref{mainthm3}. 
The conclusion (d) in Part (2) follows from Theorem \ref{mainth3} for all cases $\widetilde \g = \gl(m|n)$, $\mf{osp}(2|2n)$ or $\pn$. 

Finally, the conclusion (e) in Part (2)  follows from Corollary \ref{cor::einThmC}. This completes the proof of Theorem C.

\end{proof}

%\vspace{2mm}

%\noindent

\end{document}